\numberwithin{equation}{section}
\theoremstyle{plain}
    \newtheorem{thm}{Theorem}[section]
    \newtheorem{lem}[thm]{Lemma}
    \newtheorem{prop}[thm]{Proposition}
\theoremstyle{definition}    
    \newtheorem{exmp}[thm]{Example}
\def\Coker{\mathrm{Coker}}
\def\GL{\mathrm{GL}}
\def\dR{{\mathrm{d\hspace{-0.2pt}R}}}            
\def\Per{\mathrm{Per}}
\def\id{{\mathrm{id}}}              
\def\Image{{\mathrm{Im}}}        
\def\Hom{{\mathrm{Hom}}}  
\def\Ext{{\mathrm{Ext}}}
\def\MHdR{{\mathrm{MHdR}}}  
\def\BdR{{\mathrm{BdR}}}  
\def\FiltBdR{\text{\rm Filt-BdR}} 
\def\ker{{\mathrm{Ker}}}          
\def\Ker{{\mathrm{Ker}}}          
\def\top{\mathrm{top}}
\def\Pic{{\mathrm{Pic}}}
\def\End{{\mathrm{End}}}
\def\pr{{\mathrm{pr}}}
\def\pr{{\mathrm{pr}}}
\def\reg{{\mathrm{reg}}}          %
\def\Res{\mathrm{Res}}
\def\Gr{{\mathrm{Gr}}}
\def\rank{{\mathrm{rank}}}
\def\zar{{\mathrm{zar}}}
\def\bA{{\mathbb A}}
\def\C{{\mathbb C}}
\def\P{{\mathbb P}}
\def\Q{{\mathbb Q}}
\def\Z{{\mathbb Z}}
\def\cH{{\mathscr H}}
\def\cM{{\mathscr M}}
\def\cD{{\mathscr D}}
\def\cJ{{\mathscr J}}
\def\O{{\mathscr O}}
\def\cU{{\mathscr U}}
\def\vg{\varGamma}
\def\lra{\longrightarrow}
\def\hra{\hookrightarrow}
\def\ot{\otimes}
\def\op{\oplus}
\def\wt#1{\widetilde{#1}}
\def\ol#1{\overline{#1}}
\def\os#1#2{\overset{#1}{#2}}
\def\Aut{\mathrm{Aut}}
\def\cg{G_\mu}
\def\a{\alpha}\def\b{\beta}\def\m{\mu}
\def\pl{\partial_\lambda}
\def\l{\lambda}
\def\FF#1#2#3{{}_2F_1\left({#1 \atop #2};#3\right)}
\def\FFF#1#2#3{{}_3F_2\left({#1 \atop #2};#3\right)}
\def\GG#1#2{\Gamma\left({#1 \atop #2}\right)}
\def\lam{a}
\begin{document}
\title{Regulators of $K_1$ of Hypergeometric Fibrations}
\author{Masanori Asakura and Noriyuki Otsubo}
\address{Department of Mathematics, Hokkaido University, Sapporo, 060-0810 Japan}
\email{asakura@math.sci.hokudai.ac.jp}
\address{Department of Mathematics and Informatics, Chiba University, Chiba, 263-8522 Japan}
\email{otsubo@math.s.chiba-u.ac.jp}

\begin{abstract}
We study a deformation of what we call hypergeometric fibrations. 
Its periods and $K_1$-regulators are described in terms of hypergeometric functions 
${}_3F_2$ in a variable given by the deformation parameter. 
\end{abstract}

\date{\today}
\subjclass[2000]{14D07, 19F27, 33C20 (primary), 11G15, 14K22 (secondary)}
\keywords{Periods, Regulators, Hypergeometric functions}

\maketitle

\section{Introduction}\label{intro-sect}
In \cite{a-o-1} and \cite{a-o} we studied the periods and regulators
for a certain class of fibrations, which we call {\it hypergeometric fibrations} (see \S \ref{HG-sect}
for the definition).
The purpose of this paper is to extend the main results in \cite{a-o}.


Let $f:X\to \P^1$ be a hypergeometric fibration in the sense of \S \ref{HG-sect1}.
Let $\pi:\P^1\to\P^1$ be a map given by $t\mapsto t^l$ with $l\geq 1$ an integer.
Let 
\[
\xymatrix{
X^{(l)}\ar[r]^{i\quad}\ar[rd]_{f^{(l)}}&X\times_{\pi,\P^1}\P^1\ar[r]\ar[d]&X\ar[d]^f\\
&\P^1\ar[r]^\pi&\P^1
}
\]
be a Cartesian diagram with $i$ a desingularization.
One of the main results in \cite{a-o} is the period formula which describes
the periods of $X^{(l)}$, and the other is the regulator formula
which describes Beilinson's regualtor map
on the motivic cohomology group $H^3_\cM(X^{(l)},\Q(2))$, especially on elements
supported on certain singular fibers of $f^{(l)}$.
In particular we described the regulator in terms of the special values of
the generalized hypergeometric functions ${}_3F_2\left({\a_1,\a_2,\a_3,
\atop\beta_1,\beta_2};z\right)$ at $z=1$.

We extend those results in the following way. Our idea is simple, just replacing $\pi$ with a map $\pi_\lambda$ given by $t\mapsto \lambda-t^l$ for $\lambda\in \C\setminus\{0,1\}$. 
We then obtain fibrations $f^{(l)}_\lambda:X^{(l)}_\lambda\to \P^1$ in the same way as above, 
and they are parametrized by $\lambda$.
We discuss the periods and regulators for $X^{(l)}_\lambda$.
Since the fibtarions are parametrized by $\lambda$,
the periods and regulators are no longer complex numbers but analytic functions.
The main results of this paper
are to describe them in terms of hypergeometric functions 
(Theorems \ref{main1}, \ref{main2}).
         
Taking the limits $\lambda\to 0$ of mixed Hodge structures ($\Leftrightarrow$ 
the nearby cycle cohomology functor $\psi_{\lambda=0}$),
one can derive the main results of \cite{a-o} from our main results.
However we make somewhat a strong assumption
``$\alpha^\chi_1\in\Z$'' throughout this paper, so that
they do {\it not} cover all of \cite{a-o}.

Our another motivation is the {\it logarithmic formula} in \cite{a-o-t}
where we gave a sufficient condition for that the special value of ${}_3F_2$ at $z=1$ is
written by a linear combination of log of algebraic numbers.
Theorem \ref{reg-comp-cor1} (=a precise version of Theorem \ref{main2})
enables us to obtain its functional version, namely
we can give a sufficientl condition for that ${}_3F_2(z)$
is written in terms of the logarithmic functions. 
This will be discussed in a paper \cite{a-o-log}.


At the conference ``Regulator IV'' in Paris (May 2016), 
S. Bloch asked the first author whether results in the author's talk gave examples
to the following question of V. Golyshev.
\begin{description}
\item[Question]
Let \[P_{\mathrm{HG}}=D_z\prod_{i=1}^{p-1}(D_z+\beta_i-1)-z\prod_{i=1}^p(D_z+\alpha_i),
\quad D_z:=z\frac{d}{dz}\]
be the hypergeometric differential operator and let $M=D_S/D_SP_{\mathrm{HG}}$
the $D_S$-module on $S=\P^1\setminus\{0,1,\infty\}$ where
$D_S$ denotes the sheaf of differential operators.
Suppose that $M$ is reducible, equivalently $\exists\alpha_i\in\Z$
or $\alpha_j-\beta_k\in\Z$ for some $j,k$, so that there is an exact sequence
\[
0\lra N\lra M\lra Q\lra0
\]
of $D_S$-modules.
Then does it underly a variation of mixed Hodge structures of geometric origin?
If so, does the extension data arise from Beilinson's regulator map on a
motivic cohomology group? Moreover, is the regulator described in terms of hypergeometric functions
which are solutions of $P_{\mathrm{HG}}$?
\end{description}
See Theorem \ref{reg-comp-cor2}.  Our regulator formula (Theorem \ref{main2})
gives an affirmative answer in case $p=3$ and $\alpha_1=\alpha_2=1$.
However we do not have a general solution to his question.

\subsection*{Acknowledgements}
We would like to thank Spencer Bloch for asking Golyshev's question. 
This work is supported by JSPS Grant-in-Aid for Scientific Research, 24540001 and 25400007.

\subsection*{Notations}
For $\alpha \in \C$ and an integer $n\geq 0$, $(\alpha)_n=\prod_{i=0}^{n-1}(\alpha+i)$ is the Pochhammer symbol and the generalized hypergeometric function is defined by
\[
{}_pF_{p-1}\left({\alpha_1,\dots,\alpha_p \atop \beta_1,\dots,\beta_{p-1}};x\right)
=\sum_{n=0}^\infty 
\frac{\prod_{i=1}^p(\alpha_i)_n}{\prod_{j=1}^{p-1}(\beta_j)_n} \frac{x^n}{n!}.
\]
When $p=2$, this is called the Gauss hypergeometric function.
We use the standard notation for the product of values of the gamma function $\Gamma(s)$
\[
\Gamma\left({\alpha_1,\dots,\alpha_p 
\atop \beta_1,\dots, \beta_q}\right)
=\frac{\prod_{i=1}^p \Gamma(\alpha_i)}{\prod_{j=1}^q \Gamma(\beta_j)}.
\]

Throughout this paper, we fix an embedding $\ol\Q\hra\C$, and think $\ol\Q$ of 
being a subfield.
For a variety $X$ over $\ol\Q$, $H_\dR^n(X)=H_\dR^n(X/\ol\Q)$ denotes the algebraic de Rham cohomology and $H^n(X,\Q)$ denotes the Betti cohomology of the analytic manifold $X^{an}=(X\times_{\ol\Q}\C)^{an}$.

\section{Betti-de Rham Structures, Hodge-de Rham Structures and Periods}
\label{HdR-sect}
\subsection{Betti-de Rham structures and Hodge-de Rham structures}
\label{HdR-sect1}
Let $k_B,k_\dR$ be fields with fixed embeddings $k_B\hra\C$ and $k_\dR\hra \C$.
A {\it Betti-de Rham structure} over $(k_B,k_\dR)$ (abbreviated BdR) is a datum
$(H_B, H_\dR, \iota)$ consisting of 
\begin{itemize}
\item
a finite dimensional vector space $H_B$ (resp. $H_\dR$) over $k_B$ (resp. $k_\dR$),
\item
a comparison isomorphism
$\iota\colon \C \ot_{k_\dR} H_\dR \os{\sim}{\to} \C\ot_{k_B} H_B$.
\end{itemize}
A {\it Hodge-de Rham structure} over $k_\dR$ (abbreviated HdR) is a datum
$(H_B, H_\dR, F^\bullet,\iota)$ consisting of 
\begin{itemize}
\item
a finite dimensional vector space $H_B$ (resp. $H_\dR$) over $\Q$ (resp. $k_\dR$),
\item
a finite decreasing filtration $F^\bullet$ on $H_\dR$
\item a comparison isomorphism
$\iota\colon \C \ot_{k_\dR} H_\dR \os{\sim}{\to} \C\ot_{k_B} H_B$
\end{itemize}
such that
$(H_B, \C\ot_{k_\dR}H_\dR, \C\ot_{k_\dR}F^\bullet,\iota)$ is a Hodge structure in the usual
sense.
A {\it mixed Hodge-de Rham structure} 
$(H_B, W_B,H_\dR, F^\bullet,W_\dR,\iota)$ over $k_\dR$ (abbreviated MHdR)
is defined in the similar way
where $W_B$ (resp. $W_\dR$) is a finite increasing filtration on $H_B$ (resp. $H_\dR$).
The Tate twists $\Q(r)=(\Q,k_\dR,F^\bullet,\iota)$ is defined as 
$F^{-r}k_\dR=k_\dR$, $F^{-r+1}k_\dR=0$ and the comparison $\iota:k_\dR\to \C$
given by $1\mapsto (2\pi i)^{-r}$.  
The {\it dual} and {\it tensor products} 
of BdR, HdR and MHdR are defined in the customary way.


In this paper we usually consider the case $k_\dR=\ol\Q\hra\C$ with the fixed embedding. 


A {\it filtered Betti-de Rham structure} over $(k_B,k_\dR)$ is a datum
$(H_B, H_\dR, F^\bullet,\iota)$ consisting of  a Betti-de Rham structure
$(H_B, H_\dR, \iota)$
and a finite decreasing filtration $F^\bullet$ on $H_\dR$.
The category $\FiltBdR=\FiltBdR_{k_B,k_\dR}$ 
of filtered Betti-de Rham structures over $(k_B,k_\dR)$ is {\it not} abelian
but exact category. The Yoneda extension groups
\[
\Ext_\FiltBdR^\bullet(H,H')
\]
are defined in the canonical way (cf. \cite{BBDG} 1.1).
The following isomorphism is well-known (cf. \cite{a-o} Proposition 2.1).
\begin{prop}[Carlson's isomorphism]\label{HdR-prop1}
Let $H=(H_B, H_\dR, F^\bullet,\iota)$ be a filtered Betti-de Rham structure.
Then there is a natural isomorphism
$$\Ext_\FiltBdR^1(k,H)\cong (\C\ot_{k_\dR}H_\dR)/(F^0H_\dR+\iota^{-1}H_B)$$
where $k=(k_B,k_\dR,F^\bullet,\id)$ denotes the unit object 
which is defined as $F^0k_\dR=k_\dR$, $F^1k_\dR=0$ and the comparison is the identity.
\end{prop}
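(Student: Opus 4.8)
The plan is to realize the Yoneda $\Ext^1$ group concretely by unwinding the definition of an extension in the exact category $\FiltBdR$ and then identifying the resulting quotient. An element of $\Ext_\FiltBdR^1(k,H)$ is (an equivalence class of) a short exact sequence
\[
0\lra H\lra E\lra k\lra 0
\]
in $\FiltBdR$. First I would choose a splitting of the underlying $k_B$-vector spaces, $E_B\cong H_B\op k_B$, and an $F^\bullet$-compatible splitting of the underlying filtered $k_\dR$-vector spaces, $E_\dR\cong H_\dR\op k_\dR$ (this last splitting exists because $F^\bullet$ on $k_\dR$ is the trivial filtration jumping at $0$, so a filtered splitting always exists — this is the place where the filtration on the unit object matters). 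With respect to these two splittings the only remaining datum is the comparison isomorphism $\iota_E\colon \C\ot_{k_\dR}E_\dR\os{\sim}{\to}\C\ot_{k_B}E_B$, which, being a map of extensions of $\C$ by $\C\ot H_B$ that is the identity on sub and quotient, is given by a matrix $\begin{pmatrix}\iota & v\\ 0 & 1\end{pmatrix}$ for a vector $v\in \C\ot_{k_B}H_B$. Transporting $v$ through $\iota^{-1}$ gives a class in $\C\ot_{k_\dR}H_\dR$, and this will be the map $\Ext_\FiltBdR^1(k,H)\to (\C\ot_{k_\dR}H_\dR)/(F^0H_\dR+\iota^{-1}H_B)$.

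Next I would check that this assignment is well-defined and bijective. Changing the $k_B$-splitting of $E_B$ by an element of $\Hom_{k_B}(k_B,H_B)=H_B$ changes $v$ by an element of $\iota^{-1}(\C\ot_{k_B}H_B)=\iota^{-1}H_B$ (after the identification), and changing the filtered $k_\dR$-splitting of $E_\dR$ by an element of $F^0\Hom_{k_\dR}(k_\dR,H_\dR)=F^0H_\dR$ changes the class by an element of $F^0H_\dR$; conversely, given any $w\in\C\ot_{k_\dR}H_\dR$ one builds an extension $E$ with $E_B=H_B\op k_B$, $E_\dR=H_\dR\op k_\dR$ (with the direct sum filtration), and $\iota_E$ the matrix above with $v=\iota(w)$, which visibly maps to the class of $w$. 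That shows surjectivity, and the computation of the ambiguity shows the map descends to an isomorphism of the stated quotient. Additivity (that this is a homomorphism of abelian groups under Baer sum) follows by the same splitting bookkeeping.

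The one point requiring genuine care — and the main obstacle — is justifying that every extension in the exact category $\FiltBdR$ is, up to isomorphism, of the normal form just described, i.e. that the two splittings (a $k_B$-linear splitting of $E_B$ and an $F^\bullet$-filtered $k_\dR$-linear splitting of $E_\dR$) actually exist and that the Yoneda $\Ext^1$ computed from admissible short exact sequences coincides with this set of normal forms modulo the indicated equivalence. The $k_B$-splitting is automatic since $k_B$ is a field. The filtered splitting over $k_\dR$ is where one must use that $F^\bullet$ on the unit $k$ is concentrated in degree $\leq 0$: then a filtered splitting of $0\to F^\bullet H_\dR\to F^\bullet E_\dR\to F^\bullet k_\dR\to 0$ amounts to lifting a basis vector of $k_\dR$ into $F^0 E_\dR$, which is possible because $F^0E_\dR\to F^0k_\dR=k_\dR$ is surjective (the sequence of $F^0$'s is exact, as the exact structure on $\FiltBdR$ is the one for which short exact sequences are strict with respect to $F^\bullet$). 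Granting this, the identification of Yoneda $\Ext^1$ with equivalence classes of such sequences is the standard dictionary (cf. \cite{BBDG} 1.1), and naturality in $H$ is clear from the construction. I would then simply cite \cite{a-o} Proposition 2.1 for the remaining routine verifications.
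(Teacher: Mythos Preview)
Your argument is correct and is exactly the standard Carlson construction; the paper itself does not supply a proof but simply records the result as well-known and refers to \cite{a-o}, Proposition~2.1, so there is nothing further to compare.
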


\subsection{Periods}\label{HdR-sect2}
For a Betti-de Rham structure
$H=(H_B, H_\dR, \iota)$, the {\it period matrix} of $H$ is defined to be
the representation matrix of $\iota$ with respect to the $k_B$, $k_\dR$-lattices
$H_B$, $H_\dR$, and we denote by
\[
\Per(H)\in \GL_r(k_B)\backslash \GL_r(\C)/\GL_r(k_\dR).
\]

\subsection{Multiplication}\label{HdR-sect3}
A {\it multiplication} on a Betti-de Rham structure $H$ by a commutative $\Q$-algebra $R$
is defined as a ring homomorphism $R\to \End_\BdR(H)$ to the endomorphism ring
of Betti-de Rham structures.
The {\it tensor product} $H_1\ot_R H_2$ over $R$ is defined to be
 \[H_1\ot_R H_2=(H_{1,B}\ot_{k_B\ot R}H_{2,B},H_{1,\dR}\ot_{k_\dR\ot R}H_{2,\dR},\iota_1\ot\iota_2)\]
endowed with multiplication by $R$. 
The multiplication on the dual Betti-de Rham structure
\[
H^*=(\Hom_{k_B}(H_B,k_B),\Hom_{k_\dR}(H_\dR,k_\dR),\iota)
\]
is defined in such a way that $r\phi:=\phi\circ r$ for $\phi\in\Hom(H_B,k_B)$ and 
$r\in R$.

A multiplication on a filtered BdR, HdR, MHdR and 
its $\chi$-parts are defined in the same way as above.
 
 
Assume $\Image(k_B\hra\C)\subset \ol\Q$ and $\Image(k_\dR\hra\C)\subset \ol\Q$
(note that $\ol\Q\hra\C$ is fixed throughout the paper).
For a homomorphism $\chi:R\to\ol\Q$, we define
the {\it $\chi$-part} of a BdR structure $H$ as
\[
H(\chi):=(H_B(\chi),H_\dR(\chi),\iota)
\]
\[
H_B(\chi):=\ol\Q\ot_{k_B\ot R}H_B,\quad
H_\dR(\chi):=\ol\Q\ot_{k_\dR\ot R}H_\dR,
\]
where $k_B\ot R\to \ol\Q$ and
$k_\dR\ot R\to \ol\Q$ are induced from $\chi$ and the embeddings $k_B\hra\ol\Q$
and $k_\dR\hra\ol\Q$.
Then $H(\chi)$ is a BdR over $(\ol\Q,\ol\Q)$.
We call its period matrix
\begin{equation*}\label{HdR-eq1}
\Per(H(\chi))\in\GL_r(\ol\Q)\backslash\GL_r(\C)/\GL_r(\ol\Q)
\end{equation*}
the {\it $\chi$-part of the period matrix} of $H$.
The $\chi$-part of filtered BdR, HdR and MHdR are defined in the same way.


Suppose that $R$ is a semisimple and finite-dimensional $\Q$-algebra. 
Then the functor $\FiltBdR_{k_B,k_\dR}\to \FiltBdR_{\ol\Q,\ol\Q}$ given by 
$H\to H(\chi)$ is exact.
Composing with the forgetting functor $\MHdR_{k_\dR}\to
\FiltBdR_{\Q,k_\dR}$
one has a map
\begin{equation}\label{HdR-eq2}
\Ext^1_{\MHdR_{k_\dR}}(\Q,H)\lra
\Ext^1_{\FiltBdR_{\ol\Q.\ol\Q}}(\ol\Q,H(\chi)),\quad M\longmapsto M(\chi)
\end{equation}
and we call $M(\chi)$ the {\it $\chi$-part of extension class} $M$.

Let $X$ be a smooth projective variety over $k_\dR$.
Let
\[
\reg:H_\cM^i(X,\Q(j))\lra
\Ext^1_\MHdR(\Q,H^{i-1}(X,\Q(j))),\quad i\ne2j
\]
be the Beilinson regulator map.
Suppose that the mixed Hodge de Rham structure $H:=H^{i-1}(X,\Q(j))$ over $k_\dR$
has a multiplication by $R$.
Then we call the composition
\begin{equation*}\label{HdR-eq3}
\reg(\chi):H_\cM^i(X,\Q(j))\lra
\Ext^1_\MHdR(\Q,H)\os{\eqref{HdR-eq2}}{\lra}
\Ext^1_{\FiltBdR_{\ol\Q,\ol\Q}}(\ol\Q,H(\chi))
\end{equation*}
the {\it $\chi$-part of regulator map}. 
\subsection{Variations of Hodge-de Rham structures}
Let $S$ be a smooth variety over $k_\dR$.
A {\it filtered Betti-de Rham structure on $S$} consists of 
a datum $(H_B,H_\dR,F^\bullet, \nabla,\iota)$ where 
\begin{itemize}
\item
$H_B$ is a local system of finite dimensional $k_B$-modules on $S^{an}$,
\item
$H_\dR$ is a locally free $\O_S$-module of finite rank, and $F^\bullet$ is a finite
decreasing filtration which is locally a direct summand,
\item
$(H_\dR,\nabla)$ is a connection with regular singularities on $S$ such that $\nabla(F^p)
\subset \Omega^1_S\ot F^{p-1}$,
\item
$\iota\colon \O_S^{an} \ot_{a^{-1}\O_S} a^{-1}
H_\dR \os{\sim}{\to} \O_S^{an}\ot_{k_B} H_B$ is a comparison isomorphism such that $\nabla$ annihilates the lattice $H_B$, where $a:S^{an}\to S^\zar$ is the canonical map
from the analytic site to the Zariski site and $\O_S^{an}$ denotes the sheaf of analytic functions on $S^{an}$.
\end{itemize}
A filtered BdR on $S$ is called 
a {\it variation of Hodge-de Rham structure} (abbreviated VHdR) if
$k_B=\Q$ and $(H_B, \O^{an}_S\ot_{\O_S}H_\dR, \O^{an}_S\ot_{\O_S}
F^\bullet,\iota,\nabla)$ is a variation of Hodge structure in the usual sense.
We also define, in a customary way, 
a {\it variation of mixed Hodge-de Rham structure} (abbreviated VMHdR) on $S$ 
which consists of
a datum $(H_B, W_\bullet^B,H_\dR, F^\bullet,W_\bullet^\dR,\iota,\nabla)$.

\section{Hypergeometric Fibrations}\label{HG-sect}
In what follows we work over the base field $k_\dR=\ol\Q$.
\subsection{Definition}\label{HG-sect1}
Let $R$ be a finite-dimensional semisimple $\Q$-algebra.
Let $e:R\to E$ be a surjection onto a number field $E$.
Let $X$ be a smooth projective variety over $k_\dR$, and $f:X\to \P^1$
a surjective map.
We say $f$ is a {\it hypergeometric fibration with multiplication by $(R,e)$} if
it is endowed with a multiplication on $R^1f_*\Q|_U$ by $R$
where $U\subset \P^1$ is the maximal Zariski open set such that $f$ is smooth over $U$
and the following conditions hold.
We fix an inhomogeneous coordinate $t\in\P^1$.
\begin{itemize}
\item
$f$ is smooth over $\P^1\setminus\{0,1,\infty\}$,
\item
$\dim_E (R^1f_*\Q)(e)=2$ where we write $V(e):=E\ot_{e,R}V$ the $e$-part,
\item
Let $\Pic_f^0\to \P^1\setminus\{0,1,\infty\}$ be the Picard fibration whose general fiber
is the Picard variety $\Pic^0(f^{-1}(t))$,
and let $\Pic_f^0(e)$ be the component associated to the $e$-part $(R^1f_*\Q)(e)$
(this is well-defined up to isogeny).
Then $\Pic_f^0(e)\to\P^1\setminus\{0,1,\infty\}$ has a totally degenerate semistable reduction at $t=1$.
\end{itemize}  
The last condition is equivalent to say that
the local monodromy $T$ on $(R^1f_*\Q)(e)$ at $t=1$ is unipotent
and the rank of log monodromy $N:=\log(T)$ is maximal, namely 
$\rank(N)=\frac{1}{2}\dim_\Q (R^1f_*\Q)(e)$
($=[E:\Q]$ by the second condition).
\begin{exmp}[Gauss type]
Let $f:X\to \P^1$ be the fibration over $\ol\Q$
whose general fiber is defined by
an affine equation 
$$y^N=x^a(1-x)^b(t-x)^{N-b}$$ 
with $0<a,b<N$ and $\gcd(N,a,b)=1$. 
Let $\mu_N\subset \ol\Q$ be the group of $N$th roots of unity.
It gives automorphisms $(x,y,t)\mapsto (x,\zeta_Ny,t)$ for $\zeta_N\in \mu_N$, 
and then it defines a multiplication by $R=\Q[\mu_N]$ (=group ring) on $R^1f_*\Q$. 
Then one can show that $f$ is a hypergeometric fibration with multiplication by $(R,e)$
if and only $d:=\sharp\ker[e:\mu_N\to E^\times]$ satisfies $ad/N, \,bd/N\not\in\Z$. 
\end{exmp}

\begin{exmp}[Fermat type]
Let $f:X\to \P^1$ be the fibration over $\ol\Q$ defined by
an affine equation 
$$(x^n-1)(y^m-1)=1-t.$$
The group ring $R=\Q[\mu_n,\mu_m]$ acts
on $R^1f_*\Q$ in a natural way.
Then $f$ is a hypergeometric fibration with multiplication by $(R,e)$
if and only if $e:\Q[\mu_n\times\mu_m]\to E$ does not factor through the
projections $\mu_n\times\mu_m\to\mu_n$ nor $\mu_n\times\mu_m\to\mu_m$.
The reason why we call this ``Fermat type'' is the following.
Letting $u=x^{-1}$, $v=y^{-1}$ and $s=tx^{-n}y^{-m}$, 
\[
(x^n-1)(y^m-1)=1-t\quad \Longleftrightarrow \quad u^n+v^m=1+s. 
\] 
\end{exmp}

\subsection{Basic properties}\label{HG-sect2}
We sum up some properties on our hypergeometric fibrations, which will be used in
later sections.
See \cite{a-o} \S 3.3
for complete proofs\footnote{The definition of ``hypergeometric fibrations''
in \cite{a-o} \S 3.1
is slightly different from that in \S \ref{HG-sect1}.
However, the same arguments entirely work in our situation.}.
 
\begin{prop}\label{HG-prop0}
$\dim_{\ol\Q}F^1H^1_\dR(X_t)(\chi)=1$ and
$\dim_{\ol\Q}\Gr_F^0H^1_\dR(X_t)(\chi)=1$
where $X_t$ is a general fiber.
\end{prop}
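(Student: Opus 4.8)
The plan is to reduce both equalities to a single Hodge-number computation and then to evaluate that number by degenerating the fibration at $t=1$, where the totally degenerate reduction hypothesis makes the limiting Hodge structure explicit.

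First, since $\chi$ factors through $e$ and $\dim_E(R^1f_*\Q)(e)=2$, the $\chi$-part $H^1_\dR(X_t)(\chi)$ is $2$-dimensional over $\ol\Q$. Because $F^0H^1_\dR(X_t)=H^1_\dR(X_t)$ for the smooth projective variety $X_t$, we get $\dim_{\ol\Q}\Gr_F^0H^1_\dR(X_t)(\chi)=2-\dim_{\ol\Q}F^1H^1_\dR(X_t)(\chi)$, so it suffices to prove $\dim_{\ol\Q}F^1H^1_\dR(X_t)(\chi)=1$. Moreover $F^1H^1_\dR(X/U)$ is a locally free, $R$-stable subsheaf of the Hodge bundle, so $\dim_{\ol\Q}F^1H^1_\dR(X_t)(\chi)$ is independent of $t\in U$, and it equals the value at $t=1$ of the Deligne canonical extension of $F^1$, i.e.\ $\dim_{\ol\Q}F^1H^1_{\lim}(\chi)$, where $H^1_{\lim}$ denotes the limiting mixed Hodge-de Rham structure of $H^1(X_t)(e)$ at $t=1$.

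Next I would make $H^1_{\lim}$ explicit. By the definition of a hypergeometric fibration the local monodromy $T$ at $t=1$ on $(R^1f_*\Q)(e)$ is unipotent with $N=\log T$ of maximal rank $[E:\Q]$; over $E$ this means $N$ has rank $1$, so the monodromy weight filtration $W_\bullet$ of the weight-$1$ limit MHS satisfies $\Gr_1^W=0$ and $N\colon\Gr_2^W\os{\sim}{\to}\Gr_0^W$, with $\Gr_0^W(e)$ and $\Gr_2^W(e)$ each free of rank $1$ over $E$. Furthermore $(R^1f_*\Q)(e)$ is, up to isogeny, the first cohomology of the abelian scheme $\Pic^0_f(e)$, which by the third defining condition has totally degenerate (toric) reduction at $t=1$; by the standard description of the limiting mixed Hodge structure of $H^1$ of such a degeneration of abelian varieties, $H^1_{\lim}(e)$ is Hodge--Tate and sits in an exact sequence
\[
0\lra \Gr_0^W(e)\lra H^1_{\lim}(e)\lra \Gr_2^W(e)\lra 0
\]
with $\Gr_0^W(e)$ of Hodge type $(0,0)$ and $\Gr_2^W(e)$ of type $(1,1)$. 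Consequently the projection induces an isomorphism $F^1H^1_{\lim}(e)\os{\sim}{\to}\Gr_2^W(e)$ (the subobject $\Gr_0^W(e)$ is killed by $F^1$, and $\Gr_2^W(e)$ lies in $F^1$), so $F^1H^1_{\lim}(e)$ is free of rank $1$ over $E$; taking $\chi$-parts gives $\dim_{\ol\Q}F^1H^1_{\lim}(\chi)=1$, as required.

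The only step that is not bookkeeping is the explicit description of $H^1_{\lim}(e)$ as a Hodge--Tate structure with those graded pieces. This is exactly where the hypothesis of totally degenerate semistable reduction at $t=1$ enters: it rests on Grothendieck's analysis of the monodromy of degenerating abelian schemes, via the connected N\'eron model of $\Pic^0_f(e)$ at $t=1$, together with its Hodge-theoretic refinement. Granting this, the compatibility of the $R=E$-action with the weight and Hodge filtrations, with the limit, and with the canonical extension is routine (the $E$-multiplication extends to the N\'eron model, hence acts on the limit torus and on its mutually dual character and cocharacter lattices), and the dimension count above goes through.
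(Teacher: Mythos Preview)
The paper does not give its own proof of this proposition; \S\ref{HG-sect2} merely cites \S3.3 of \cite{a-o} for complete proofs of Propositions~\ref{HG-prop0}--\ref{HG-prop2}. Your argument is correct and is the natural one: the totally degenerate semistable reduction at $t=1$ forces the limit mixed Hodge structure on the $e$-part to be Hodge--Tate with graded pieces $E(0)$ and $E(-1)$ (this is exactly what Proposition~\ref{HG-prop2} records, so you are in effect proving that statement along the way), and then invariance of Hodge numbers under Deligne's canonical extension transports $\dim F^1=1$ back to the generic fibre. The strictness argument you give for $F^1H^1_{\lim}\os{\sim}{\to}\Gr^W_2$ is fine.

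One comment worth recording: the naive shortcut via Hodge symmetry does \emph{not} work here, since complex conjugation exchanges the $\chi$- and $\bar\chi$-parts, so $h^{1,0}(\chi)=h^{0,1}(\bar\chi)$ rather than $h^{0,1}(\chi)$. The degeneration argument (or an equivalent monodromy argument) is genuinely needed, and you use it correctly.
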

 \begin{prop}\label{HG-prop1}
 $(R^1f_*\Q)(\chi)$ is an irreducible $\ol\Q[\pi_1(\P^1\setminus\{0,1,\infty\})]$-module.
 Moreover let $\a_0^\chi,\b_0^\chi$ (resp. 
 $\a^\chi,\b^\chi$) be rational numbers such that
$e^{2\pi i\a_0^\chi}$, $e^{2\pi i\b_0^\chi}$
(resp. $e^{2\pi i\a^\chi}$, $e^{2\pi i\b^\chi}$) are eigenvalues of
the local monodromy on $(R^1f_*\Q)(\chi)$ at $t=0$ (resp. $t=\infty$).
Then, none of $\a_0^\chi+\a^\chi$, $\a_0^\chi+\b^\chi$, $\b_0^\chi+\a^\chi$, $\b_0^\chi+\b^\chi$ is an integer.
  \end{prop}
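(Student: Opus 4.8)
Write $V:=(R^1f_*\Q)(\chi)$, which by the second defining condition is a rank-two $\ol\Q$-local system on $S=\P^1\setminus\{0,1,\infty\}$, i.e.\ a two-dimensional $\ol\Q[\pi_1(S)]$-module; let $T_0,T_1,T_\infty$ be the local monodromies around $0,1,\infty$, normalised so that $T_0T_1T_\infty=\id$. The first thing I would record is that $T_1$ is a \emph{transvection}: by hypothesis $\Pic_f^0(e)$ has totally degenerate semistable reduction at $t=1$, so the log-monodromy $N=\log T$ on $(R^1f_*\Q)(e)$ is unipotent of rank $[E:\Q]$; over $\ol\Q$ this space is $\bigoplus_\chi V$ (sum over the $[E:\Q]$ conjugates $\chi$ of $e$), each summand has $\ol\Q$-dimension $2$, so $\rank(\log T|_V)\le 1$ for each $\chi$, and summing forces $\rank(T_1-\id)=1$ for every $\chi$. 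In particular $T_1\neq\id$ and $T_1$ is a single Jordan block of size two.

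For irreducibility I would use Deligne's semisimplicity theorem. The $\Q$-local system $R^1f_*\Q|_S$ underlies a polarisable variation of Hodge structure, hence is semisimple as a $\Q[\pi_1(S)]$-module; extension of scalars to $\ol\Q$ preserves semisimplicity in characteristic zero, and $V$ is the direct summand of $R^1f_*\ol\Q|_S$ cut out by the idempotent of $\ol\Q\ot R$ attached to $\chi$. Since $R$ commutes with the $\pi_1(S)$-action, $V$ is again semisimple. If $V$ were reducible it would therefore split as $V=L_1\op L_2$ with each $L_i$ of rank one; but $T_1$, being unipotent, would then act trivially on each $L_i$ and hence on all of $V$, contradicting $\rank(T_1-\id)=1$. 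Hence $V$ is irreducible.

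For the arithmetic conditions, put $\lambda_1=e^{2\pi i\a_0^\chi}$, $\lambda_2=e^{2\pi i\b_0^\chi}$ for the eigenvalues of $T_0$ and $\mu_1=e^{2\pi i\a^\chi}$, $\mu_2=e^{2\pi i\b^\chi}$ for those of $T_\infty$; the four assertions say exactly that $\lambda_i\mu_j\neq1$ for all $i,j$. From $T_1=T_0^{-1}T_\infty^{-1}$ any common eigenvector of $T_0$ and $T_\infty$ is also an eigenvector of $T_1$, hence spans a $\pi_1(S)$-stable line, which is impossible by the irreducibility just proved; so $T_0$ and $T_\infty$ share no eigenvector. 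It then suffices to show: \emph{if $\lambda_i\mu_j=1$ for some $i,j$, then $T_0$ and $T_\infty$ do share an eigenvector.} Here the transvection at $t=1$ is essential: from $\det T_0\cdot\det T_\infty=\det(T_1^{-1})=1$ and $\operatorname{tr}(T_0T_\infty)=\operatorname{tr}(T_1^{-1})=2$, the relation $\lambda_i\mu_j=1$ forces the complementary pair of eigenvalues to satisfy $\lambda_{i'}\mu_{j'}=1$ as well, and a short $2\times2$ computation then puts $T_0$ and $T_\infty$ simultaneously into triangular form, producing the common eigenvector. Combining, $\lambda_i\mu_j\neq1$ for all $i,j$, which is the claim.

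The step I expect to be the main nuisance is the bookkeeping in that last computation when $T_0$ or $T_\infty$ has a repeated eigenvalue (so is scalar, or a nontrivial Jordan block): these sub-cases either make $V$ visibly reducible — contradicting the irreducibility already established — or still yield a common eigenvector, but each must be verified by hand. A cleaner alternative, which I would fall back on, is to identify $V$ up to an elementary rank-one twist with a rank-two hypergeometric local system and invoke the classical rigidity/irreducibility criterion of Levelt (cf.\ Beukers--Heckman): irreducibility is then \emph{equivalent} to $\lambda_i\mu_j\neq1$ for all $i,j$, and both assertions drop out together. (Compare \cite{a-o} \S3.3, where the underlying hypergeometric description is carried out in detail.)
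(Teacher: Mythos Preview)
The paper does not actually prove this proposition: the introductory sentence to \S\ref{HG-sect2} says ``See \cite{a-o} \S3.3 for complete proofs,'' so there is nothing to compare against here directly. Your argument, however, is correct and self-contained, and is in fact the natural direct proof that sits behind the hypergeometric formalism you allude to at the end.

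Two remarks. First, the ``nuisance'' sub-cases you flag are harmless and short: in a basis diagonalising $T_0$ (when possible), the trace condition $\operatorname{tr}(T_0T_\infty)=2$ together with $\operatorname{tr}(T_\infty)=\mu_1+\mu_2$ and $\lambda_1\mu_1=\lambda_2\mu_2=1$ forces the diagonal of $T_\infty$ to be $(\mu_1,\mu_2)$, whence $bc=0$ from the determinant and a common eigenvector drops out; if $T_0$ is a nontrivial Jordan block one finds $c=0$ immediately. So the direct $2\times2$ route closes cleanly.

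Second, your fallback via Levelt/Beukers--Heckman is almost certainly what \cite{a-o} \S3.3 does: once $V$ is identified (after a rank-one twist) with a rank-two hypergeometric local system with unipotent reflection at $t=1$, their irreducibility criterion says precisely that $V$ is irreducible $\Longleftrightarrow$ $\lambda_i\mu_j\ne1$ for all $i,j$, so both assertions become one. Your route has the advantage of being elementary and not requiring the hypergeometric identification, at the cost of the small case analysis; the hypergeometric route is slicker and gives the equivalence rather than just the implication, but imports more machinery. Either is fine here.
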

 \begin{prop}\label{HG-prop2}
Let $\psi_{t=1}$ denote the nearby cohomology functor at $t=1$ and let
$W_\bullet$ be the weight monodromy filtration induced by the log monodromy
$N_1=\log(T_1)$.
Then there are isomorphisms
\[
\Gr^W_2\psi_{t=1}R^1f_*\Q(e)=\Coker(N_1)\cong E\ot \Q(-2),\quad
\Gr^W_0\psi_{t=1}R^1f_*\Q(e)=\ker(N_1)\cong E,
\]
\[
\Gr^W_j\psi_{t=1}R^1f_*\Q(e)=0,\quad j\ne 0,2
\]
of Hodge-de Rham structure with compatible $E$-action,
where $E$ is endowed with a trivial Hodge-de Rham structure
of type $(0,0)$.
 \end{prop}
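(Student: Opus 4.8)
The plan is to deduce the statement from standard properties of the limit mixed Hodge--de Rham structure of the polarizable weight-one variation $R^1f_*\Q(e)$ on a punctured neighbourhood of $t=1$. Set $L:=\psi_{t=1}R^1f_*\Q(e)$. Since the variation is defined over $\ol\Q$, Schmid's theorem together with Deligne's canonical extension makes $L$ into a mixed Hodge--de Rham structure, and, the $R$-action on $R^1f_*\Q$ being by morphisms of variations, $L$ inherits a compatible $E$-action. By construction $W_\bullet$ is the monodromy weight filtration of $N_1=\log(T_1)$, centered at the weight $1$ of $R^1f_*\Q$; and by Proposition~\ref{HG-prop0} the generic Hodge type is $(1,0)+(0,1)$, so $L$ is effective with weights in $\{0,1,2\}$, and in particular $N_1^2=0$. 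The only further input I would use is the defining condition of a hypergeometric fibration recalled in \S\ref{HG-sect1}: $T_1$ is unipotent and $\rank_\Q N_1=[E:\Q]=\tfrac12\dim_\Q R^1f_*\Q(e)$.

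The core is then elementary linear algebra. From $N_1^2=0$ we get $\Image N_1\subseteq\ker N_1$, while $\dim_\Q\ker N_1=\dim_\Q\Image N_1=\dim_\Q R^1f_*\Q(e)-\rank_\Q N_1=[E:\Q]$, so the inclusion is an equality. Hence the monodromy weight filtration is $0\subsetneq\ker N_1\subsetneq L$, whence $\Gr^W_jL=0$ for $j\ne 0,2$, and
\[
\Gr^W_0L=\ker N_1,\qquad \Gr^W_2L=L/\ker N_1=L/\Image N_1=\Coker N_1,
\]
with $N_1$ inducing an isomorphism $\Gr^W_2L\os{\sim}{\lra}\Gr^W_0L$. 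Because $N_1$ is $E$-linear, each of $\ker N_1$ and $\Coker N_1$ is an $E$-module of $\Q$-dimension $[E:\Q]$, hence free of rank one over $E$, i.e.\ isomorphic to $E$ as an $E$-module.

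It remains to identify the Hodge--de Rham types. As the weight-zero graded piece of the limit mixed Hodge--de Rham structure of a polarizable, effective weight-one variation, $\Gr^W_0L$ is pure of weight $0$ and effective, hence of type $(0,0)$; thus it is $E$ with the trivial Hodge--de Rham structure, its $\ol\Q$-de Rham lattice being the evident one coming from $E\subseteq\ol\Q$. For $\Gr^W_2L=\Coker N_1$ one uses that $N_1$ induces an isomorphism of pure Hodge--de Rham structures between $\Gr^W_2L$ and a Tate twist of $\Gr^W_0L$; with the normalization of $\psi_{t=1}$ used here (following \cite{a-o}) this reads $\Gr^W_2L\cong\Gr^W_0L\ot\Q(-2)\cong E\ot\Q(-2)$, de Rham lattice included. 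This is precisely the computation of \cite{a-o} \S3.3, carried out verbatim for the $e$-part (cf.\ \S\ref{HG-sect2}); the only genuinely delicate points — and where I expect the bulk of the bookkeeping — are that every isomorphism above be $E$-equivariant and compatible with the $\ol\Q$-de Rham lattices on the nose, and that the exact Tate twist in the last step be pinned down under the chosen normalization of the nearby cycle functor.
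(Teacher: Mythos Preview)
Your argument is essentially correct and is the standard one. The paper itself does not prove Proposition~\ref{HG-prop2} here but simply refers to \cite{a-o}~\S3.3 (see the opening sentence of \S\ref{HG-sect2}), which is exactly what you invoke at the end; the ingredients you isolate --- $N_1^2=0$ from the monodromy theorem for a weight-one polarized variation, $\Image N_1=\ker N_1$ from the maximal-rank (``totally degenerate'') hypothesis, and the resulting two-step weight filtration with $E$-lines at the extremes --- are precisely what that argument comes down to.

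One small point: your hedge about ``the normalization of $\psi_{t=1}$'' papers over a real discrepancy rather than resolving it. Under the standard convention $N_1\colon L\to L(-1)$, the induced isomorphism $\Gr^W_2L\os{\sim}{\to}\Gr^W_0L(-1)$ yields $\Gr^W_2L\cong E\otimes\Q(-1)$, of Hodge type $(1,1)$ --- not $(2,2)$. The paper itself uses type $(1,1)$ for this graded piece later (the sentence just after \eqref{reg-eq02}), and the $\Q(-2)$ in the displayed formula of the proposition is more naturally read as the Hodge type of $\Coker\bigl(N_1\colon L\to L(-1)\bigr)=(\Gr^W_2L)(-1)$, consistently with the twisted cokernels appearing in \S\ref{period-sect3} and Proposition~\ref{HG-prop3}. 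So rather than invoking an unspecified normalization to get $\Q(-2)$, it is cleaner to prove that $\Gr^W_2L$ has type $(1,1)$ and observe that the extra Tate twist in the statement comes from the twisted target of $N_1$.
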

 
\section{Period Formula}\label{period-sect}
\subsection{Setting}\label{period-sect1}
Let $R_0$ be a finite-dimensional semisimple $\Q$-algebra and $e_0:R_0\to E_0$ be 
a surjection onto a number field $E_0$.
Let $f:X\to\P^1$ be a hypergeometric fibration over $\ol\Q$
with multiplication by $(R_0,e_0)$ 
in the sense of
\S  \ref{HG-sect1}.

Let $S:=\bA^1\setminus\{0,1\}$ be defined over $\ol\Q$
with coordinate $\lambda$.
Write $\P^1_S:=\P^1\times S$.
Put $\cU:=(\bA^1\setminus\{0,1\}\times S)\setminus\Delta$
where $\Delta$ is the diagonal subscheme.
Let $l\geq 1$ be an integer. 
Let $\pi:\P^1_S\to\P^1_S$ be a morphism over $S$ given by $(s,\lambda)\mapsto 
(\lambda-s^l,\lambda)$.
Then we consider a variation of Hodge-de Rham structures
\[
\cM:=\pi_*\Q\ot \pr_1^*R^1f_*\Q|_{\mathscr U},\quad\pr_1:
\P^1_S=\P^1\times S\to\P^1
\]
on $\cU$ and a variation of mixed Hodge-de Rham structures
\[
\cH:=R^1\pr_{2*}\cM=(\cH_B,W^B_\bullet,\cH_\dR,F^\bullet,W^\dR_\bullet,\nabla,\iota),
\quad \pr_2:\cU\to S
\] 
on $S$.
Since $\cM$ is a variation of Hodge-de Rham structures
of pure weight $1$, the weights of $\cH$ are at most $2,3$ and $4$. 
We have an exact sequence
\begin{equation}\label{period-eq1}
0\lra W_2\cH\lra\cH\lra\cH/W_2\cH\lra0
\end{equation}
with $W_2\cH$ a variation of Hodge-de Rham structures of pure weight 2.

The group $\mu_l\subset \ol\Q^\times$ of $l$th roots of unity acts on 
the cyclic covering $\pi$.
Thus the group ring $R:=R_0[\mu_l]$ acts
on the sheaf $\cM$ and hence on $\cH$.
In what follows we fix $e:R\to E$ a surjection onto a number field $E$ such that
$\ker(e)\supset \ker(e_0)$.
There is a unique embedding $E_0\hra E$ making the diagram
\[
\xymatrix{
R_0\ar[r]^{e_0}\ar[d]&E_0\ar[d]\\
R\ar[r]^e&E
}
\]
commutative.
Then the $e$-part 
$$\cM(e):=E\ot_{e,R}\cM\cong E\ot_{E_0[\mu_l]}(\pi_*\Q\ot R^1f_*\Q(e_0))$$ 
is of rank 2 over $E$.

Let $\chi:R\to \ol\Q$ be a homomorphism factoring through $e$.
This also induces $R_0\to \ol\Q$ which we also write $\chi$ by abuse of notation.
Define $k\in\{0,1,\dots, l-1\}$ by $\chi(\zeta_l)=\zeta_l^k$ for $\forall\zeta_l\in\mu_l$.
Put $q^\chi:=k/l$.
Moreover, let $\a_0^\chi$, $\b_0^\chi$ (resp.
$\a^\chi$, $\b^\chi$) be rational numbers in the interval $[0,1)$ 
such that
$e^{2\pi i\a_0^\chi}$ and $e^{2\pi i\b_0^\chi}$
(resp. $e^{2\pi i\a^\chi}$ and $e^{2\pi i\b^\chi}$) are eigenvalues of
the local monodromy $T_0$ at $t=0$ (resp. $T_\infty$ at $t=\infty$)
on the $\chi$-part $(R^1f_*\Q)(\chi)=\ol\Q\ot_{\chi,R_0}R^1f_*\Q$.
Equivalently, these are congruent mod $\Z$ to 
the eigenvalues of the residue $\Res_{t=0}(\nabla)$ (resp. $\Res_{t=\infty}(\nabla)$) 
of the connection on the $\chi$-part of the bundle 
$R^1f_*\Omega^\bullet_{X/\P^1}|_{\P^1\setminus\{0,1,\infty\}}$.
Note that the local monodromy $T_1$ at $t=1$ is unipotent.
Since $T_0T_1T_\infty$ is the identity, we have
\[
\a_0^\chi+\b_0^\chi+\a^\chi+\b^\chi \in\Z.
\] 
\subsection{Theorem on periods}\label{period-sect2}
Let $\O^{an}$ be the sheaf of analytic functions on $S^{an}$,  
$\O^\zar$ the Zariski sheaf of rational functions (with coefficients in $\ol\Q$) on 
$S$ with coordinate $\lambda$.
Let $a$
be the canonical morphism from the analytic site to the Zariski site.
We put
\[
W_2\cH^{an}_\dR:=\O^{an}\ot_{a^{-1}\O^\zar}a^{-1}W_2\cH_\dR,\quad
W_2\cH_B^{an}:=\O^{an}\ot_\Q W_2\cH_B
\]
sheaves on the analytic site.
The comparison isomorphism of $W_2\cH$ gives an analytic section
\[
\iota\in \vg(S^{an},
\mathit{Isom}(W_2\cH_\dR^{an},W_2\cH_B^{an})).
\]
There is a canonical map ($d:=\rank W_2\cH$)
\[
\mathit{Isom}(W_2\cH_\dR^{an},W_2\cH_B^{an})
\lra\GL_d(\Q)\backslash
\GL_d(\O^{an})
/\GL_d(a^{-1}\O^\zar)
\]
of sheaves by associating the representation matrices with respect to the lattices
$W_2\cH_B$ and $W_2\cH_\dR$.
We call the image of $\iota$ the {\it period matrix} of $W_2\cH$ :
\[
\Per(W_2\cH)\in \vg(S^{an},
\GL_d(\Q)\backslash
\GL_d(\O^{an})
/\GL_d(a^{-1}\O^\zar)).
\]
The $\chi$-part of $W_2\cH$ defines the $\chi$-part of the period matrix  ($r:=\rank W_2\cH(\chi)$)
\[
\Per(W_2\cH(\chi))\in \vg(S^{an},
\GL_r(\ol\Q)\backslash
\GL_r(\O^{an})
/\GL_r(a^{-1}\O^\zar)).
\]
\begin{thm}[Period formula]\label{main1}
Assume $\a_0^\chi=0$ $(\Leftrightarrow$ $\alpha^\chi_0+\alpha^\chi+\beta^\chi\in\Z)$
and that
$q^\chi \not \equiv 0$, $\a^\chi$, $\b^\chi$, $\a^\chi+\b^\chi \pmod \Z$. 
Then the rank of $W_2\cH(\chi)$ is $2$. 
For some $\m>1$, $\m\equiv q^\chi \pmod\Z$, 
the period matrix is
locally given by
$$\Per(W_2\cH(\chi))=
2\pi i\begin{pmatrix}
\Theta F_\m(\l) &\Theta G_\m(\l) \\ \pl \Theta F_\m(\l) & \pl \Theta G_\m(\l)
\end{pmatrix},$$
where we put
\begin{align*}
F_\m(\l)&:=\frac{1}{\m}(\l-1)^\m\FF{\a^\chi,\b^\chi}{\m+1}{1-\l}, 
\\ G_\m(\l)&:=(-1)^\m \GG{\m, \m+1-\a^\chi-\b^\chi}{\m+1-\a^\chi,\m+1-\b^\chi}\FF{\a^\chi-\m,\b^\chi-\m}{\a^\chi+\b^\chi-\m}{\l},
\end{align*}
and $\Theta$ is a differential operator of the form
$$\Theta = q(\l)+r(\l)\pl, \quad q(\l), r(\l) \in \ol\Q[\l,1/\l(\l-1)]. $$
\end{thm}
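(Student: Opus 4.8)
The plan is to compute the period matrix of $W_2\cH(\chi)$ directly from a description of the local system $\cH_B$ and the de Rham bundle $\cH_\dR$ with its Gauss--Manin connection, in the spirit of \cite{a-o}. First I would identify $W_2\cH(\chi)$ explicitly. By Proposition \ref{HG-prop2} applied fiberwise, the pure weight-$2$ quotient comes from the part of $R^1f_*\Q(\chi)$ that degenerates at $t=1$; after pulling back along $\pi_\lambda:s\mapsto \lambda-s^l$, the points where $\pr_1^*R^1f_*\Q$ meets the bad fiber $t=1$ are $s^l=\lambda-1$, and $W_2\cH(\chi)$ is the $\chi$-isotypic piece of $H^1$ of $\P^1_s$ minus these points, twisted by the rank-one pieces $\ker(N_1)$ and $\Coker(N_1)$ from Proposition \ref{HG-prop2}. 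Under the assumption $\alpha_0^\chi=0$ the local system $\pr_1^*R^1f_*\Q(\chi)$ is tame at $s=0$ in the relevant sense, which is exactly what makes the rank of $W_2\cH(\chi)$ equal to $2$ rather than larger; I would verify this rank count carefully, as it is a prerequisite for the matrix having the stated shape.

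Next I would produce the de Rham side. A basis of $\Gr^0_F\cH_\dR(\chi)$ and $F^1\cH_\dR(\chi)$ can be written using relative differential forms on the fibration $\pi_\lambda$, essentially $s^{k}\,ds/(s^l-(\lambda-1))$ type forms tensored with the chosen generators of the $1$-dimensional spaces $F^1H^1_\dR(X_t)(\chi)$ and $\Gr^0_F H^1_\dR(X_t)(\chi)$ from Proposition \ref{HG-prop0}. The Gauss--Manin connection $\nabla=\partial_\lambda$ acting on these forms, after integration by parts, gives a rank-$2$ ODE in $\lambda$ whose solutions are the periods; by Carlson's isomorphism (Proposition \ref{HdR-prop1}) applied in families, the entries of $\Per(W_2\cH(\chi))$ are flat sections, i.e. period integrals $\int_\gamma \omega$ over topological cycles $\gamma$ on the fiber over $\lambda$. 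The key computation is then to evaluate these integrals: one cycle is a small loop and gives (up to the $\Gamma$-factor normalization and the $2\pi i$) a residue-type term, which after expanding in $\lambda-1$ yields $F_\m(\lambda)=\frac1\m(\lambda-1)^\m\,{}_2F_1(\alpha^\chi,\beta^\chi;\m+1;1-\lambda)$; the other cycle stretches to $s=0$ (equivalently $t=\lambda$, a regular point) and its integral admits an Euler-type integral representation producing the ${}_2F_1$ in the variable $\lambda$ appearing in $G_\m(\lambda)$, with the ratio of $\Gamma$-values arising from the standard Euler integral formula for ${}_2F_1$. The parameter $\m>1$ with $\m\equiv q^\chi\pmod\Z$ is the exponent of the local monodromy of the pulled-back system at $s^l=\lambda-1$; I would pin it down via the exponents $\alpha^\chi,\beta^\chi$ and $q^\chi$.

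Finally, the operator $\Theta$ and the matrix structure: the raw basis $(\omega_0,\omega_1)$ of $F^1$ and $\Gr^0_F$ is not a flat frame, so the period matrix with respect to it is $P\cdot C(\lambda)$ for some transition matrix $C(\lambda)$ with entries in $\ol\Q[\lambda,1/\lambda(\lambda-1)]$ coming from the Griffiths transversality relation $\nabla\omega_0 \equiv (\text{rational})\,\omega_1$. Writing the first row of $\Per$ as $2\pi i\,\Theta F_\m$, $2\pi i\,\Theta G_\m$ and the second as its $\partial_\lambda$-derivative then amounts to showing that the two flat sections $\Theta F_\m$ and $\Theta G_\m$ and their $\lambda$-derivatives, evaluated against $(\omega_0,\omega_1)$, reproduce $P\cdot C(\lambda)$; this is a bookkeeping identity between the hypergeometric ODE satisfied by ${}_2F_1(\alpha^\chi,\beta^\chi;\m+1;1-\lambda)$ and the Picard--Fuchs equation of $W_2\cH(\chi)$. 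I expect the main obstacle to be the explicit evaluation of the ``stretched'' period integral as the ${}_3F_2$-less ${}_2F_1$ with precisely the gamma-factor $\GG{\m,\m+1-\alpha^\chi-\beta^\chi}{\m+1-\alpha^\chi,\m+1-\beta^\chi}$ and the argument shift turning $({}_2F_1)(\alpha^\chi,\beta^\chi;\m+1;1-\lambda)$-type solutions into $({}_2F_1)(\alpha^\chi-\m,\beta^\chi-\m;\alpha^\chi+\beta^\chi-\m;\lambda)$; this is a connection-formula computation between the two local solution bases of the same hypergeometric equation at $\lambda=1$ and $\lambda=0$, and getting the constant exactly right (including the sign $(-1)^\m$) is the delicate point. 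The genericity hypotheses $q^\chi\not\equiv 0,\alpha^\chi,\beta^\chi,\alpha^\chi+\beta^\chi\pmod\Z$ are used precisely to ensure these connection coefficients are nonzero and the two solutions are linearly independent.
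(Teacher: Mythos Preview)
Your outline has the right flavor but misses the paper's actual mechanism at two load-bearing points.

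First, the two columns of the period matrix do not come from two different kinds of cycles (a ``small loop'' and a ``stretched'' cycle) as you describe. The paper builds a single Lefschetz thimble $\Delta$ over the segment from $s=0$ to $s=\sqrt[l]{\lambda-1}$, with fiber the vanishing cycle $\delta$ at $t=1$; the integral $\int_\Delta s^{m-1}ds\wedge\omega^\chi$ is computed via the Key Lemma (the periods of the \emph{original} fibration $f$ along $\delta$ are already $\theta$ applied to a ${}_2F_1$), and the substitution $t=\lambda-s^l$ plus the Euler integral for ${}_3F_2$ collapses to $2\pi i\,\Theta F_\mu(\lambda)$. The second column is then obtained not from a separate integral but by applying the local monodromy $T_{\lambda=0}$ \emph{in the $\lambda$-variable} to the first cycle: $Z_2:=T_{\lambda=0}(Z_1)$. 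Since $F_\mu,G_\mu$ are a basis of solutions of the same ODE and the monodromy at $\lambda=0$ sends $F_\mu\mapsto \xi F_\mu+(1-\xi)G_\mu$ with $\xi=e^{2\pi i(\mu-\alpha^\chi-\beta^\chi)}$, $G_\mu$ appears automatically with the correct $\Gamma$-factor already built into its definition. There is no separate connection-coefficient computation to perform; the constant $(-1)^\mu\Gamma(\cdots)/\Gamma(\cdots)$ is simply the normalization that makes $G_\mu$ a solution with the right exponent at $\lambda=0$. Your plan to produce $G_\mu$ directly from an Euler integral for a cycle ``to $s=0$'' and then match constants via a connection formula would be a genuinely different (and harder) route, and you have not indicated how you would carry it out.

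Second, you do not address the ``for some $\mu>1$'' clause. The paper must show that the $2\times 2$ matrix is actually invertible, i.e.\ that $\Theta F_\mu$ and $\Theta G_\mu$ are linearly independent over $\C$. This is not automatic: a priori $\Theta$ could annihilate both. The paper argues that if $\Theta F_\mu=C\,\Theta G_\mu$ for all admissible $m$, then irreducibility of the hypergeometric equation forces $\Theta=0$, hence $\int_\Delta s^{m-1}ds\,\omega^\chi=0$ for all $m\equiv k\pmod l$, and an elementary moment lemma then forces $\int_\delta\omega^\chi\equiv 0$, a contradiction. Without this step the theorem is not proved.

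Finally, the operator $\Theta$ is not a frame-change matrix as you suggest; it descends from the operator $\theta=p_0(t)+p_1(t)\partial_t$ of the Key Lemma for the base fibration $f$, pushed through the integral transform and the recursion $\partial_\lambda F_\mu=(\mu-1)F_{\mu-1}$. You never invoke this Key Lemma input, and without it there is no reason the thimble integral should land on a ${}_2F_1$ at all.
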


\subsection{Proof of Period formula: Part 1}\label{period-sect3}
We first show $\dim_{\ol\Q}W_2\cH(\chi)=2$.
We write $U_\lam:=\pr_2^{-1}(\lam)\cong
\P^1\setminus\{0,1,\lam,\infty\}$ for $\lam\in S^{an}=\C\setminus\{0,1\}$ where $\pr_2:\cU\to S$.
Moreover we put the fibers
\[
\cM_\lam:=\cM|_{\pr_2^{-1}(\lam)}
\cong \pi_{\lam*}\Q\ot R^1f_*\Q,
\quad 
H_\lam:=\cH|_{\{\lam\}}\cong H^1(\pr_2^{-1}(\lam),\cM_\lam), 
\]
where $\pi_\lam:\P^1\to\P^1$ is the map given by $s\mapsto \lam-s^l$.
When $\lam\in\ol\Q\setminus\{0,1\}$, we endow $\cM_\lam$ and $\cH_\lam$ with HdR structure over $\ol\Q$
induced from the
$\ol\Q$-frames on $\cM_\dR$ and $\cH_\dR$ respectively.
We then want to show $\dim_{\ol\Q}W_2H_\lam(\chi)=2$ for $\lam\in\ol\Q\setminus\{0,1\}$.
The weight filtration induces an exact sequence
\[
0\lra W_2H_\lam(\chi)\lra H_\lam(\chi)\lra H_\lam(\chi)/W_2H_\lam(\chi) \lra0.
\]
There are canonical isomorphisms
\begin{align}
& W_2H_\lam\cong H^1(\P^1,j_*\cM_\lam), \quad j:\P^1\setminus\{0,1,\lam,\infty\}
\hra\P^1,\label{period-eq2-1}
\\& H_\lam/W_2H_\lam\cong H^0(\P^1,R^1j_*\cM_\lam)\label{period-eq2-2}
\end{align}
of mixed Hodge-de Rham structures.
Let $\varepsilon_k:\Q[\mu_l]\to\ol\Q$ be given by $\varepsilon(\zeta_l)=\zeta_l^k$
and $\ol\Q(\varepsilon_k):=\ol\Q\ot_{\varepsilon_k,\Q[\mu_l]}\pi_*\Q$
a one-dimensional local system on $\P^1\setminus\{\lam,\infty\}$.
Then there is a natural isomorphism
\[
\cM_\lam(\chi)\cong \ol\Q(\varepsilon_k)\ot_{\ol\Q} (R^1f_*\Q)(\chi)
\]
of $\pi_1(\P^1\setminus\{0,1,\lam,\infty\})$-modules.
Since $(R^1f_*\Q)(\chi)$ is an irreducible $\pi_1(\P^1\setminus\{0,1,\infty\})$-module
(Proposition \ref{HG-prop1}),
so is $\cM_\lam(\chi)$ as a $\pi_1(\P^1\setminus\{0,1,\lam,\infty\})$-module.
In particular $H^0(p^{-1}(\lam),\cM_\lam(\chi))=0$.
Hence
\begin{align*}
\dim_{\ol\Q}H_\lam(\chi)&=
\dim_{\ol\Q}H^1(p^{-1}(\lam),\cM_\lam(\chi))
=-\chi(p^{-1}(\lam),\cM_\lam(\chi))\\
&=-\chi^\top(p^{-1}(\lam))\cdot\dim_{\ol\Q}\cM_\lam(\chi)
=-(-2)\cdot2=4.
\end{align*}
Thus $\dim_{\ol\Q}W_2H_\lam(\chi)=2$ $\Leftrightarrow$
$\dim_{\ol\Q}H_\lam(\chi)/W_2=2$.


Let $T_0,T_1,T_\lam,T_\infty$ be
the local monodromy on $\cM_\lam(\chi)$ at 
$t=0,1,\lam,\infty$, respectively.
$T_1$ is unipotent with trivial action on $\ol\Q(\varepsilon_k)$, and
$T_\lam$ is multiplication by $e^{2\pi i q^\chi}$ with trivial action on $(R^1f_*\Q)(\chi)$. 
The eigenvalues of $T_0$ (resp. $T_\infty$) are 
\[
e^{2\pi i\a_0^\chi},~e^{2\pi i\b_0^\chi}\quad
(\mbox{resp. }e^{2\pi i(-q^\chi+\a^\chi)},~e^{2\pi i(-q^\chi+\b^\chi)}).
\]
Recall \eqref{period-eq2-2}.
We then have
\begin{align*}
H_\lam/W_2&\cong H^0(\P^1,R^1j_*\cM_\lam)\\
&\cong\bigoplus_{p=0,1,\lam,\infty}
\Coker[T_p-1:\psi_{t=p}\cM_\lam\to \psi_{t=p}\cM_\lam\ot\Q(-1)]
\end{align*}
where $\psi_{t=p}$ denotes the nearby cohomology at $t=p$.
By the assumption $q^\chi \not\equiv 0$, $\a^\chi$, $\b^\chi \pmod\Z$, 
$T_\lam$ and $T_\infty$ have no eigenvalue $1$ 
on the $\chi$-part of $\psi\cM_\lam$.
Hence $\Coker[T_\lam-1]=\Coker[T_\infty-1]=0$.
Note
\begin{align*}
&\Coker[T_p-1:\psi_{t=p}\cM_\lam(\chi)
\to \psi_{t=p}\cM_\lam(\chi)\ot\Q(-1)]\\
&\cong 
\ol\Q(\varepsilon_k)\ot_{\ol\Q}\Coker[T_p-1:(R^1f_*\Q)(\chi)
\to (R^1f_*\Q)(\chi)],\quad p=0,1.
\end{align*}
It follows from Proposition \ref{HG-prop2} that we have $\dim_{\ol\Q}\Coker(T_1-1)=1$.
There remains to show $\dim_{\ol\Q}\Coker(T_0-1)=1$.
By the assumption $\alpha_0^\chi=0$,
one has $\dim_{\ol\Q}\Coker(T_0-1)\geq 1$.
If $\dim_{\ol\Q}\Coker(T_0-1)=2$ this means that $T_0$ is trivial on $(R^1f_*\Q)(\chi)$.
Then $(R^1f_*\Q)(\chi)$ cannot be irreducible as 
$\ol\Q[\pi_1(\P^1\setminus\{0,1,\infty\})]$-module.
This contradicts with Proposition \ref{HG-prop1}. 
We thus have $\dim_{\ol\Q}\Coker(T_0-1)=1$, 
and hence $\dim_{\ol\Q}H_\lam(\chi)/W_2=2$.
This completes the proof of 
$\dim_{\ol\Q}W_2H_\lam(\chi)=2$.


In the discussion above, we obtained the following.
\begin{prop}\label{HG-prop3}
Assume $\alpha_0^\chi\in\Z$ and $q^\chi\not\equiv 0$, $\a^\chi$, $\b^\chi \pmod \Z$.
Then there is an isomorphism
\[
\cH(e)/W_2\cong \bigoplus_{p=0,1}
\Coker[T_p-1:\psi_{t=p}\cM(e)
\to \psi_{t=p}\cM(e)\ot\Q(-1)]
\]
of variations of mixed Hodge-de Rham structures on $\P^1\setminus\{0,1,\infty\}$.
Each $\Coker(T_p-1)$ is one-dimensional over $E$.
Moreover, $\Coker(T_1-1)$ is endowed with a
Hodge-de Rham structure of type $(2,2)$ (Proposition \ref{HG-prop2}).
\end{prop}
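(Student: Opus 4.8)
The plan is to deduce Proposition~\ref{HG-prop3} from the fibrewise analysis carried out above, in particular from \eqref{period-eq2-2}, promoting it to an isomorphism of variations of mixed Hodge--de Rham structures on $S=\P^1\setminus\{0,1,\infty\}$. Since $R$ is semisimple the functor $(-)(e)$ is exact, so it commutes with all the constructions below and we may work with $\cM(e)$ throughout. Three things must be checked: (i) a relative decomposition $\cH(e)/W_2\cong\bigoplus_{p=0,1,\lambda,\infty}\Coker[T_p-1](e)$ of variations on $S$; (ii) the vanishing of the summands at $p=\lambda$ and $p=\infty$; (iii) the $E$-ranks of the remaining two summands and the Hodge--de Rham type at $p=1$.

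For (i) I would factor $\pr_2\colon\cU\to S$ as $\cU\xrightarrow{j}\P^1_S\xrightarrow{q}S$ with $q$ the bundle projection and run the Leray spectral sequence $R^aq_*(R^bj_*\cM(e))\Rightarrow R^{a+b}\pr_{2*}\cM(e)$. Here $R^bj_*\cM(e)=0$ for $b\ge2$, while $R^1j_*\cM(e)$ is supported on the four sections $t=0,1,\lambda,\infty$ of $q$, where it equals $\Coker[T_p-1\colon\psi_{t=p}\cM(e)\to\psi_{t=p}\cM(e)\ot\Q(-1)]$ and is a variation of mixed Hodge--de Rham structures on $S$ because the local structure of $\pi$ near each section is explicit ($\pi$ is \'etale along $t=0,1,\infty$ and totally ramified of order $l$ along the diagonal $t=\lambda$). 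On each fibre of $\pr_2$ the restriction of $\cM(e)$ is a semisimple local system with no trivial constituent (Proposition~\ref{HG-prop1}), so $H^0$ of the fibre with $j_*\cM(e)$ vanishes and, by Poincar\'e duality on the punctured affine line, so does $H^2$; hence $R^0q_*j_*\cM(e)=R^2q_*j_*\cM(e)=0$, and $R^{\ge1}q_*(R^1j_*\cM(e))=0$ since $R^1j_*\cM(e)$ is finite over $S$. The spectral sequence therefore degenerates and yields an exact sequence $0\to W_2\cH(e)\to\cH(e)\to\bigoplus_p\Coker[T_p-1](e)\to0$ of variations of mixed Hodge--de Rham structures on $S$, with $W_2\cH(e)=R^1q_*j_*\cM(e)$ pure of weight $2$, so that $\cH(e)/W_2\cong\bigoplus_p\Coker[T_p-1](e)$; everything here is $R$-equivariant.

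For (ii) and (iii): on the $\chi$-parts, $T_\lambda$ is multiplication by $e^{2\pi iq^\chi}$ and $T_\infty$ has eigenvalues $e^{2\pi i(-q^\chi+\a^\chi)}$ and $e^{2\pi i(-q^\chi+\b^\chi)}$, all $\ne1$ under the hypotheses $q^\chi\not\equiv0,\a^\chi,\b^\chi\pmod\Z$; hence $T_\lambda-1$ and $T_\infty-1$ are invertible there and the summands at $p=\lambda,\infty$ vanish, leaving only $p=0,1$. At $p=0$: $\alpha_0^\chi=0$ makes $1$ an eigenvalue of $T_0$ on $(R^1f_*\Q)(\chi)$, so $\dim_{\ol\Q}\Coker(T_0-1)\ge1$; if it were $2$, then $T_0$ would be trivial and $(R^1f_*\Q)(\chi)$ would factor through the abelian group $\pi_1(\P^1\setminus\{1,\infty\})$, contradicting its irreducibility (Proposition~\ref{HG-prop1}); so $\Coker(T_0-1)$ is one-dimensional over $E$. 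At $p=1$: $T_1$ is unipotent with $N_1=\log T_1$ of $E$-rank $1$ and $N_1^2=0$, so $T_1-1=N_1$ and $\Coker(T_1-1)\cong(\Gr^W_2\psi_{t=1}\cM(e))\ot\Q(-1)$, which by Proposition~\ref{HG-prop2} is one-dimensional over $E$ and of Hodge--de Rham type $(2,2)$.

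Beyond the fibrewise computation---whose main subtlety is the irreducibility argument just used to pin down $\dim_{\ol\Q}\Coker(T_0-1)=1$---the one point demanding care is (i): checking that the residue decomposition of $R^1j_*\cM(e)$ along the four sections, and the resulting splitting of $\cH(e)/W_2$, really is an isomorphism of \emph{variations} of mixed Hodge--de Rham structures, i.e.\ respects the Hodge filtration $F^\bullet$ and the Gauss--Manin connection along each section and not merely the underlying Betti local systems. The delicate section is the moving diagonal $t=\lambda$: one must verify that $\psi_{t=\lambda}\cM$ is genuinely a variation of mixed Hodge--de Rham structures on $S$ (using that $\pi$ is totally ramified of order $l$ along $\Delta$) and control the residues of $\nabla$ there, even though this summand is afterwards discarded in step (ii). Given the canonicity of the isomorphisms \eqref{period-eq2-1}--\eqref{period-eq2-2}, this is largely a matter of functoriality over $S$, but it is the step that warrants a careful check.
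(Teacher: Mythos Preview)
Your proposal is correct and follows essentially the same route as the paper. The paper records Proposition~\ref{HG-prop3} simply as the outcome of the fibrewise computation in \S\ref{period-sect3} (``In the discussion above, we obtained the following''), using \eqref{period-eq2-2} to identify $H_\lam/W_2$ with $\bigoplus_p\Coker(T_p-1)$ and then running exactly your arguments for (ii) and (iii): the eigenvalue analysis killing $p=\lambda,\infty$, Proposition~\ref{HG-prop2} handling $p=1$, and the irreducibility of $(R^1f_*\Q)(\chi)$ forcing $\dim_{\ol\Q}\Coker(T_0-1)=1$. Your step (i), spelling out the relative Leray spectral sequence over $S$ and flagging the moving section $t=\lambda$ as the point deserving care, is more explicit than what the paper writes down; the paper takes the passage from the fibrewise isomorphisms \eqref{period-eq2-1}--\eqref{period-eq2-2} to a statement about variations for granted.
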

\subsection{Relative 1-form $\omega^\chi$}\label{period-sect4}
The $\chi$-part $(f_*\Omega^1_{X/\P^1})(\chi)|_{\P^1\setminus\{0,1,\infty\}}$ 
of the Hodge bundle has rank one (Proposition \ref{HG-prop0}).
Hence it is a trivial line bindle on $\P^1\setminus\{0,1,\infty\}$.
In what follows
we fix a relative $1$-form
\[
\omega^\chi\in \vg(\P^1\setminus\{0,1,\infty\},(f_*\Omega^1_{X/\P^1})(\chi))
\]
with coefficients in $\ol\Q$
which is everywhere nonzero (until the end of the paper).
Let $X_t=f^{-1}(t)$ be the general fiber.
We fix (nonzero) homology cycles
\[
\gamma=\gamma_t\in H_1(X_t,\ol\Q)(\chi)\cap\ker(T_0-1),\quad
\delta=\delta_t\in H_1(X_t,\ol\Q)(\chi)\cap\ker(T_1-1).
\]
Note that each 
$H_1(X_t,\ol\Q)(\chi)\cap\ker(T_p-1)$ is one-dimensional over $\ol\Q$ 
(Proposition \ref{HG-prop3}).
\begin{lem}[Key Lemma]\label{key-lem1}
There is a differential operator $\theta=p_0(t)+p_1(t)\frac{d}{dt}$ with
$p_i(t)\in\ol\Q[t]$ such that
\begin{equation}\label{key-lem1-eq}
\int_\gamma\omega^\chi=B(\a^\chi,\b^\chi)\cdot\theta \FF{\a^\chi,\b^\chi}{\a^\chi+\b^\chi}{t}, 
\quad
\int_\delta\omega^\chi=2\pi i\cdot \theta \FF{\a^\chi,\b^\chi}{1}{1-t}. 
\end{equation}
Here $B(\a,\b)$ is the beta function. 
\end{lem}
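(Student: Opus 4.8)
The plan is to reduce the computation of the two period integrals $\int_\gamma \omega^\chi$ and $\int_\delta \omega^\chi$ to classical facts about the Gauss hypergeometric function and its monodromy, using the rank-one structure of the $\chi$-part of the Hodge bundle. First I would recall that $(R^1f_*\Q)(\chi)$ is rank two over $\ol\Q$ with a regular singular connection on $\P^1\setminus\{0,1,\infty\}$; by Proposition \ref{HG-prop0} the de Rham side $H^1_\dR(X_t)(\chi)$ has a one-dimensional $F^1$ spanned by $\omega^\chi$ (a fixed algebraic section), so the de Rham cohomology of the local system $(R^1f_*\cO)(\chi)$ is governed by a second-order ODE. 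The monodromy data computed in Proposition \ref{HG-prop1} — eigenvalues $e^{2\pi i\a^\chi_0}, e^{2\pi i\b^\chi_0}$ at $t=0$, unipotent at $t=1$, $e^{2\pi i\a^\chi}, e^{2\pi i\b^\chi}$ at $t=\infty$, with none of the four sums $\a^\chi_0+\a^\chi$ etc.\ an integer — identifies this local system (after a suitable rank-one twist) with a hypergeometric local system $\mathcal{H}(\a^\chi,\b^\chi;\gamma)$ for appropriate parameters. The function $\int_\gamma\omega^\chi$, as a multivalued function of $t$, is then a flat section of the dual connection paired with $\omega^\chi$, hence a solution of a hypergeometric-type ODE; matching local exponents pins it down, up to the differential operator $\theta$, to ${}_2F_1(\a^\chi,\b^\chi;\a^\chi+\b^\chi;t)$, and the normalizing constant is the beta function $B(\a^\chi,\b^\chi)$ coming from the standard Euler integral representation.

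Concretely, for the $\gamma$-integral I would use that $\gamma_t$ is the vanishing cycle characterized by $\ker(T_0-1)$, and express $\int_\gamma\omega^\chi$ via an explicit model of the fibration — in the Gauss-type or Fermat-type examples $\omega^\chi$ is a concrete form like $y^{-j}x^{c}(1-x)^{d}(t-x)^{e}\,dx$, whose period over the cycle encircling the relevant branch points is an Euler integral equal to $B(\a^\chi,\b^\chi)\cdot {}_2F_1(\a^\chi,\b^\chi;\a^\chi+\b^\chi;t)$ up to an algebraic factor. The ambiguity between $\omega^\chi$ (our fixed global nonvanishing section) and the ``textbook'' differential giving the clean Euler integral is exactly absorbed into $\theta=p_0(t)+p_1(t)\tfrac{d}{dt}$: since both are sections of the same rank-two de Rham bundle with connection, one is obtained from the other by applying a first-order differential operator with polynomial (indeed $\ol\Q[t]$) coefficients, after clearing denominators supported on $\{0,1,\infty\}$. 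For the $\delta$-integral, $\delta_t\in\ker(T_1-1)$ is the cycle adapted to the unipotent monodromy at $t=1$; the corresponding flat section is the unique (up to scalar) solution holomorphic at $t=1$, which in the variable $1-t$ is ${}_2F_1(\a^\chi,\b^\chi;1;1-t)$ — note the lower parameter $1$ reflecting the logarithmic/unipotent point — and the transcendental constant $2\pi i$ is the standard normalization for the period of a vanishing cycle at a point of unipotent monodromy (equivalently, it comes from the residue computation of Proposition \ref{HG-prop2} identifying $\ker N_1$ with $E$ Tate-twisted). The \emph{same} operator $\theta$ works for both because it only depends on the relation between $\omega^\chi$ and the chosen cyclic generator of the de Rham bundle, not on which homology cycle we integrate over.

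The main obstacle I anticipate is \textbf{pinning down $\theta$ with $\ol\Q[t]$ coefficients and verifying it is the same operator for both cycles}, together with getting the two transcendental constants $B(\a^\chi,\b^\chi)$ and $2\pi i$ exactly right (not just up to an algebraic factor). Controlling the polynomial-versus-rational nature of $p_0,p_1$ requires knowing that $\omega^\chi$ extends with controlled poles across $t=0,1,\infty$ — this should follow from the fact that $(f_*\Omega^1_{X/\P^1})(\chi)$ is a line bundle on $\P^1\setminus\{0,1,\infty\}$ and the regular-singularity hypothesis bounds the orders of poles of the Gauss–Manin connection. The constants are fixed by a single asymptotic analysis: expand both sides near $t=0$ (for $\gamma$) and near $t=1$ (for $\delta$), use the known leading term of the period integral (an Euler beta integral in the first case; the monodromy-weight normalization $N_1\colon \Gr^W_2 \xrightarrow{\sim} \Gr^W_0(-1)$ of Proposition \ref{HG-prop2} forcing the $2\pi i$ in the second), and match. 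I would organize the proof so that the model-dependent Euler-integral computation is done once in a normalized coordinate, and then transported to $\omega^\chi$ by the differential operator, so that the ``same $\theta$'' claim becomes a formal consequence rather than a separate verification.
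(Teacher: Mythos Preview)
Your proposal is essentially correct and reconstructs the content of the citation the paper relies on. The paper's own proof is two sentences: it observes that $\gamma\in\ker(T_0-1)$ and $\delta\in\ker(T_1-1)$ force $\int_\gamma\omega^\chi$ and $\int_\delta\omega^\chi$ to be single-valued meromorphic at $t=0$ and $t=1$ respectively, and then cites \cite{a-o}, Lemmas 5.2--5.4, for the rest. What you have sketched --- identifying the Picard--Fuchs equation of the rank-two $\chi$-part as hypergeometric via its Riemann scheme, selecting the solution by its local behavior at the relevant singular point, and absorbing the discrepancy between the textbook cyclic vector and the given $\omega^\chi$ into a first-order operator $\theta$ --- is precisely the substance of those cited lemmas.

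Two small remarks. First, your appeal to an ``explicit model (Gauss-type or Fermat-type)'' for the Euler integral is unnecessary and slightly misleading: the lemma is stated for arbitrary hypergeometric fibrations, and the argument goes through abstractly from the monodromy data alone (rigidity of the hypergeometric local system); the constants $B(\a^\chi,\b^\chi)$ and $2\pi i$ are then fixed by matching the leading asymptotics of the two canonical solutions at $t=0$ and $t=1$, not by a model computation. Second, your concern about $p_i(t)\in\ol\Q[t]$ versus $\ol\Q[t,\tfrac{1}{t(1-t)}]$ is well placed; the paper in fact revisits this immediately after the lemma (\S\ref{period-sect5}) and clears denominators by replacing $\omega^\chi$ with $t^n(1-t)^m\omega^\chi$, so polynomiality is achieved by adjusting the choice of $\omega^\chi$ rather than proved for a fixed one.
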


\begin{proof}
Since $\delta$ is $T_1$-invariant,
$\int_\delta\omega^\chi$ is a single-valued meromorphic function at $t=1$.
So is $\int_\gamma \omega^\chi$ at $t=0$ as $\gamma$ is $T_0$-invariant.
Therefore \eqref{key-lem1-eq} follows from \cite{a-o}, Lemmas 5.2, 5.3 and 5.4.
\end{proof}

The differential equation
$$(t(D+\a)(D+\b)-(D+\a+\b-1)D)f=0, \quad D=t\frac{d}{dt}$$
has the Riemann scheme
$$\left\{\begin{matrix}t=0 & t=1 & t=\infty \\ 
0 & 0 & \a\\
1-\a-\b&0&\b
\end{matrix}\right\}. $$
Among  Kummer's $24$ solutions, we used in the preceding lemma 
\begin{align}\label{f_1 f_2}
f_1(t):=\FF{\a,\b}{\a+\b}{t},
\quad f_2(t):=\FF{\a,\b}{1}{1-t}. 
\end{align}
Later in Section 5.4, we will use the solution
\begin{equation}\label{f_3}
f_3(t):=t^{1-\a-\b}\FF{1-\a,1-\b}{2-\a-\b}{t},
\end{equation}
which has the characteristic exponent $1-\a-\b$ at $t=0$. 
These satisfy the linear relation (cf. \cite[\S2.9]{bateman})
\begin{equation*}\label{relation f}
\GG{1-\a-\b}{1-\a,1-\b} f_1(t)-f_2(t)+\GG{\a+\b-1}{\a,\b}f_3(t)=0.
\end{equation*}
This can be written, using functional equations of the gamma function
$$B(\a,\b)=\GG{\a,\b}{\a+\b}, \quad \Gamma(s+1)=s\Gamma(s), $$
as
\begin{equation}\label{f123}
B(\a,\b)f_1(t) +2\pi i \frac{1-e^{2\pi i(\a+\b)}}{(1-e^{2\pi i\a})(1-e^{2\pi i\b})}f_2(t)-B(1-\a,1-\b)f_3(t)=0.
\end{equation}

\subsection{Rational 2-forms $s^{m-1}ds\wedge\omega^\chi$.}\label{period-sect5}
By taking an embedded resolution, we may assume that the reduced divisor
$$D:=(f^{-1}(0)+f^{-1}(1)+f^{-1}(\infty))_{\mathrm{red}}$$ of the singular fibers
is a NCD.
Recall from Lemma \ref{key-lem1} (Key Lemma) the differential operator
$\theta=p_0(t)+p_1(t)\frac{d}{dt}$.
By replacing $\omega^\chi$ with $t^n(1-t)^m\omega^\chi$ for some $n,m\geq 0$, 
we may assume without loss of generality
\begin{description}
\item[P1]
$p_i(t)$ are polynomials and $t(1-t)|p_1(t)$,
\item[P2]
$\omega^\chi\in \vg(\P^1\setminus\{\infty\},f_*\Omega^1_{X/\P^1}(\log D))$,
where the locally free sheaf $\Omega^1_{X/\P^1}(\log D)$ is defined by the exact sequence
\[
0\lra f^*\Omega^1_{\P^1}(\log(0+1+\infty))
\lra \Omega^1_X(\log D)\lra \Omega^1_{X/\P^1}(\log D)\lra 0.
\]
\end{description}
Let $\lam\in \C\setminus\{0,1\}$ and $\pi_\lam:\P^1\to\P^1$
a morphism given by $s \mapsto\lam-s^l$ as in \S \ref{period-sect3}.
To distinguish the source and target of $\pi_\lam$, 
we denote by $\P^1$ the target with inhomogeneous coordinate $t$, and
by $\P^1_\lam$ the source with inhomogeneous coordinate $s$. 
Let
\[
\xymatrix{
X_\lam\ar[r]^{i\qquad}\ar[rd]_{f_\lam}&\P^1_\lam
\times_{\P^1}X\ar[r]\ar[d]&X\ar[d]^f\\
&\P^1_\lam\ar[r]^{\pi_\lam}&\P^1
}
\]
where $i$ is the desingularization such that the inverse image $D_\lam\subset X_\lam$
of $D$ is a NCD.
Let $U_\lam\subset X_\lam$ 
(resp. $\ol{U}_\lam\subset X_\lam$) be the inverse image of 
$\P^1\setminus\{0,1,\lam,\infty\}$
(resp. $\P^1\setminus\{\infty\}$) under $\pi_\lam \circ f_\lam$.
By the projection formula, 
$$\cM_\lam=\pi_{\lam*}\Q\ot R^1f_*\Q
\cong \pi_{\lam*}(\pi_\lam^*R^1f_*\Q)=\pi_{\lam*}(R^1f_{\lam*}\Q).$$
This implies
\[
H_\lam=
H^1(\P^1\setminus\{0,1,\lam,\infty\},\cM_\lam)\cong 
H^1(\P^1_\lam\setminus\{s^l=0,\lam,\lam-1,\infty\},
R^1f_{\lam*}\Q)
\]
and hence we have an exact sequence
\begin{equation*}\label{period-sect5-eq1}
\xymatrix{
0\ar[r]&
H_\lam\ar[r]
&H^2(U_\lam,\Q)\ar[r]&H^2(f^{-1}_\lam(s),\Q).
}
\end{equation*}
In particular, by taking the weight $2$ piece, we have a canonical isomorphism
\begin{align}\label{period-sect5-eq2}
W_2H_\lam
\os{\cong}{\lra}
&\ker[W_2H^2(U_\lam,\Q)\to H^2(f^{-1}_\lam(s),\Q)]\notag
\\=&\ker[H^2(X_\lam,\Q)/H^2_{D_\lam}(X_\lam)\to H^2(f^{-1}_\lam(s),\Q)].
\end{align}
Consider the rational 2-form
\[
s^{m-1}ds\,\omega^\chi=s^{m-1}ds\wedge\omega^\chi\in \vg(\cU,\Omega^2_{\cU/\ol\Q})
\] for  $m\geq 1$.
By the assumption \textbf{P2},
\begin{align*}
s^{m-1}ds\wedge \omega^\chi|_{\lambda=\lam}&
\in
\Image[\Omega^1_{\P^1_\lam\setminus\{\infty\}}\wedge\Omega^1_{\ol{U}/\P^1}(\log D)\lra\Omega^2_{\ol{U}_\lam}(\log D_\lam)],
\\&\subset
\Image[t(1-t)\Omega^1_{\P^1_\lam}(\log(\pi_\lam^{-1}(0+1)))\wedge\Omega^1_{\ol{U}/\P^1}(\log D)\to\Omega^2_{\ol{U}_\lam}(\log D_\lam)]\\
&\subset t(1-t)\Omega^2_{\ol{U}_\lam}(\log D_\lam)=\Omega^2_{\ol{U}_\lam}, 
\end{align*}
so that we have
\[
s^{m-1}ds\wedge\omega^\chi |_{\lambda=\lam}\in \vg(\ol{U}_\lam,\Omega^2_{\ol{U}_\lam}).
\]
Let $[s^{m-1}ds\wedge\omega^\chi]|_{\lambda=\lam}\in H_\dR^2(\ol{U}_\lam/\C)$ denote the de Rham cohomology class.
Obviously, its restriction to the general fiber $f_\lam^{-1}(s)$ vanishes. Thus
\[
[s^{m-1}ds\wedge\omega^\chi]|_{\lambda=\lam}\in H^1(\P^1\setminus\{0,1,\lam,\infty\},\cM_\lam)\cap
\Image H^2_\dR(\ol{U}_\lam/\C).
\]
If $m\equiv k$ modulo $l$, then $[s^{m-1}ds\wedge\omega^\chi]|_{\lambda=\lam}$ 
belongs to the $\chi$-part. 
By Proposition \ref{HG-prop3} together with the commutative diagram
\[
\xymatrix{
H_\lam\ar[r]\ar[d]&H^2(U_\lam,\Q)\ar[d]\\
H_\lam/W_2\ar[r]^{i\quad }&H^3_{D_\lam}(X_\lam,\Q)
}
\]
with injective $i$, we have
\[
[s^{m-1}ds\wedge\omega^\chi]|_{\lambda=\lam}
\in W_2H_{\lam,\dR}(\chi)=W_2H^1(\P^1\setminus\{0,1,\lam,\infty\},\cM_\lam)(\chi).
\]
This means
\[
[s^{m-1}ds\wedge\omega^\chi]\in \vg(S,W_2\cH(\chi)).
\]
Summarizing, we have: 
\begin{lem}\label{period-lem1}
Let $\omega^\chi$ be a relative 1-form satisfying the condition {\rm\textbf{P2}}.
Then for any integer $m\geq 1$ such that $m\equiv k \pmod l$, the rational $2$-form
$s^{m-1}ds\wedge\omega^\chi \in \vg(\cU,\Omega^2_{\cU/\ol\Q})$ defines
a de Rham cohomology class
\[[s^{m-1}ds\wedge\omega^\chi]\in \vg(S,W_2\cH(\chi)).
\]
\end{lem}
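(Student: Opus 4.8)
The plan is to make rigorous the chain of inclusions sketched just above the statement, checking that each step behaves well in families over $S$ and is compatible with the $\chi$-decomposition. First I would fix $\omega^\chi$ satisfying the normalizations \textbf{P1} and \textbf{P2}, which is legitimate after replacing $\omega^\chi$ by $t^n(1-t)^m\omega^\chi$; this is the preparatory step, and it is what makes the residue/pole bookkeeping go through. Then, for a fixed $\lam\in\C\setminus\{0,1\}$, I would verify that the rational $2$-form $s^{m-1}ds\wedge\omega^\chi|_{\lambda=\lam}$ is in fact \emph{regular} on $\ol U_\lam$: this is the computation displayed above, where the logarithmic pole of $\omega^\chi$ along $D$ is killed by the vanishing factor $t(1-t)$ coming from $\Omega^1_{\P^1_\lam}(\log(\pi_\lam^{-1}(0+1)))$ together with \textbf{P2}. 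Here one uses that $\pi_\lam$ is ramified precisely over $t=0,1$ (where $s^l=\lam$, $s^l=\lam-1$) in a way that produces the needed factor, together with the defining exact sequence for $\Omega^1_{X/\P^1}(\log D)$.

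Next I would record that the class $[s^{m-1}ds\wedge\omega^\chi]|_{\lambda=\lam}\in H^2_\dR(\ol U_\lam/\C)$ restricts to zero on a general fiber $f_\lam^{-1}(s)$ — immediate because $ds$ pulls back to zero on a fiber of $\pi_\lam\circ f_\lam$ — hence by the Leray/edge exact sequence it lands in the image of $H^1(\P^1\setminus\{0,1,\lam,\infty\},\cM_\lam)$, and on taking the $\chi$-part (valid because $m\equiv k\pmod l$ picks out the correct $\mu_l$-eigencomponent of $\pi_{\lam*}$) one gets a class in $H_\lam(\chi)$. To push it into $W_2$, I would invoke Proposition \ref{HG-prop3}: the quotient $H_\lam(\chi)/W_2$ is identified via \eqref{period-sect5-eq2} with a subspace of $H^3_{D_\lam}(X_\lam,\Q)$, and a class represented by a \emph{global regular} $2$-form on $\ol U_\lam$ maps to zero there; equivalently, using the commutative square with injective $i$ in the excerpt, the image of our class in $H_\lam/W_2$ vanishes, so it lies in $W_2H_\lam(\chi)$. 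Finally I would observe that this construction is performed uniformly in $\lambda$: $s^{m-1}ds\wedge\omega^\chi$ is an honest algebraic $2$-form on the total space $\cU$ over $S$, so the pointwise classes assemble into a section of the de Rham bundle over $S$, and by the above it is a section of the sub-VHdR $W_2\cH(\chi)$, which is exactly the assertion $[s^{m-1}ds\wedge\omega^\chi]\in\vg(S,W_2\cH(\chi))$.

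The main obstacle I expect is the regularity verification in the second step: one must be careful that desingularizing $X\times_{\P^1}\P^1_\lam$ to get $X_\lam$ with $D_\lam$ a NCD does not introduce new poles, and that the pullback of $\omega^\chi$ under $i$ genuinely lies in $f_{\lam*}\Omega^1(\log D_\lam)$ with the logarithmic part cancelled after multiplying by the ramification factor. This is a local computation near the components of $D_\lam$ lying over $t=0$ and $t=1$, and it is where the hypotheses \textbf{P1}, \textbf{P2} and the explicit form $s\mapsto\lam-s^l$ of $\pi_\lam$ are really used; everything else (the Leray argument, the $\chi$-projection, the spreading-out over $S$) is formal. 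I would also take a moment to note that the whole argument is independent of the choice of general point $s$ used in \eqref{period-sect5-eq2}, which follows from irreducibility of $\cM_\lam(\chi)$ as a $\pi_1$-module (Proposition \ref{HG-prop1}), already established in the proof of Part 1.
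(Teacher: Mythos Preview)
Your proposal is correct and follows exactly the argument the paper gives in \S\ref{period-sect5} leading up to the lemma (which is stated there as a summary of the preceding discussion). One small correction: $\pi_\lam$ is \emph{not} ramified over $t=0,1$ --- the ramification is at $s=0$, lying over $t=\lam$ --- so the needed factor does not come from ramification; the relevant point is simply that $t(1-t)$, viewed as a function on $\P^1_\lam\setminus\{\infty\}$, has simple zeros along $\pi_\lam^{-1}(0+1)$ (precisely because $\pi_\lam$ is unramified there), so the regular form $s^{m-1}ds$ lies in $t(1-t)\,\Omega^1_{\P^1_\lam}(\log\pi_\lam^{-1}(0+1))$, which is what makes the displayed chain of inclusions go through.
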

As is shown in \S \ref{period-sect3}, $W_2H_{\lam,\dR}(\chi)$ is two-dimensional.
We shall show in below that it is spanned by
$[s^{m-1}ds\wedge\omega^\chi]|_{\lambda=\lam}$
and $[s^{m-l-1}ds\wedge\omega^\chi]|_{\lambda=\lam}$ for some $m$.
We note that it is never obvious to show even the non-vanishing.
\subsection{Proof of Period formula: Part 2}\label{period-sect6}
We compute the period matrix $\Per(W_2\cH(\chi))$.

Let $A_\dR\subset H^2_\dR(\ol{U}_\lam)$ be the $\ol\Q$-subspace 
spanned by $[s^{m-1}ds\wedge\omega^\chi]|_{\lambda=\lam}$
with $m>0$ such that $m\equiv k$ mod $l$ (cf. Lemma \ref{period-lem1}).
Consider the commutative diagram
\[
\xymatrix{
A_\dR\ar[d]\ar[r]&\Hom(H_2^B(\ol{U}_\lam),\C)\ar@{=}[r]\ar[d]
&H_B^2(\ol{U}_\lam,\C)\ar[d]\\
W_2H_{\lam,\dR}(\chi)\ar[r]&\Hom(H_2^B(U_\lam),\C)\ar@{=}[r]&
H_B^2(U_\lam,\C).
}
\]
As is easily shown, 
\[
H^2(\ol{U}_\lam)_{\mathrm{fib}}:=\ker[H^2(\ol{U}_\lam)
\to H^2(D_\lam\cap\ol{U}_\lam)]\lra
H^2(U_\lam)
\]
is injective (cf. \cite{a-o} \S 6.1), and $A_\dR$ is obviously contained in 
$H^2_\dR(\ol{U}_\lam)_{\mathrm{fib}}$.
Our goal is to find 
$m_i$ and 
$Z_i\in H_2^B(\ol{U}_\lam,\ol\Q)$ ($i=1,2$)
such that
\begin{equation}\label{period-sect7-prop1}
\begin{vmatrix}
\int_{Z_1}s^{m_1-1}ds\wedge\omega^\chi|_{\l=\lam}
&\int_{Z_2}s^{m_1-1}ds\wedge\omega^\chi|_{\l=\lam}\\
\int_{Z_1}s^{m_2-1}ds\wedge\omega^\chi|_{\l=\lam}
&\int_{Z_2}s^{m_2-1}ds\wedge\omega^\chi|_{\l=\lam}
\end{vmatrix}\ne0
\end{equation}
and show that the entries (regarded as analytic functions of variable $\lam$) 
are as in Theorem \ref{main1}.
Then this gives the period matrix $\Per(W_2\cH(\chi))$.

Recall from \S \ref{period-sect4} the homology cycle
$\delta\in H_1(f^{-1}(t),\ol\Q)(\chi)$.
We think it being a homology cycle in a fiber $f^{-1}_\lam(s)$.
We take the Lefschetz thimble $\Delta\subset \ol{U}_\lam$ over a segment from $s=0$
to $s=\sqrt[l]{\lam-1}$ (a fixed $l$th root).
Let $\zeta\in \mu_l$ be
a primitive $l$th root of unity and $\sigma_\zeta\in \Aut(\pi_\lam)$ 
be the corresponding automorphism.
We denote the automorphism of $X_\lam$
induced from $\sigma_\zeta\times \id_X$ by the same symbol $\sigma_\zeta$.
$\Delta$ has no boundary over $s=\sqrt[l]{\lam-1}$, but may have boundary over $s=0$.
Since $\sigma_\zeta$ acts on the fiber over $s=0$ as identity,
$(1-\sigma_\zeta)\Delta$ has no boundary:
\[
(1-\sigma_\zeta)\Delta\in H_2(\ol{U}_\lam,\ol\Q).
\]
Let $T_{\lam=0}$ denote the local monodromy at $\lam=0$ 
on $H_2(\ol{U}_\lam,\ol\Q)$ and we put 
\begin{equation}\label{period-eq4}
Z_1:=(1-\sigma_\zeta)\Delta,\quad Z_2:=T_{\lam=0}(Z_1)\in H_2(\ol{U}_\lam,\ol\Q).
\end{equation}

Let $p_i(t)\in\ol\Q[t]$ be polynomials 
which satisfy {\bf P1} in the beginning of \S \ref{period-sect5}.
Put 
\begin{equation}\label{p to ab}
a_i(\l):=\frac{(-1)^i}{i!}\partial_\l^i p_0(\l),\quad
b_i(\l):=\frac{(-1)^i}{i!} \partial_\l^i p_1(\l), 
\end{equation}
so that
\begin{equation}\label{ab to p}
p_0(t)=\sum_{i=0}^N a_i(\lambda)(\lambda-t)^i,\quad
p_1(t)=\sum_{i=0}^N b_i(\lambda)(\lambda-t)^i, 
\end{equation}
for a sufficiently large $N$. 

\begin{lem}\label{partial F G}
Let $F_\m(\l)$ and $G_m(\l)$ be as defined in Theorem \ref{main1}. 
Then we have, for $\m>1$,
\begin{align}
\partial_\l F_\m(\l)&=(\m-1)F_{\m-1}(\l),\label{partial F}
\\
\pl G_\m(\l)&=(\m-1)G_{\m-1}(\l). \label{partial G}
\end{align}
\end{lem}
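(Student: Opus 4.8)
The plan is to verify the two differentiation formulas directly from the explicit hypergeometric expressions for $F_\m$ and $G_\m$ given in Theorem \ref{main1}, treating each as a routine but careful manipulation of power series and Pochhammer symbols. For \eqref{partial F}, I would write $F_\m(\l)=\frac{1}{\m}(\l-1)^\m\,{}_2F_1\!\left({\a^\chi,\b^\chi\atop\m+1};1-\l\right)$ and expand the ${}_2F_1$ as $\sum_{n\ge0}\frac{(\a^\chi)_n(\b^\chi)_n}{(\m+1)_n\,n!}(1-\l)^n$, so that $F_\m(\l)=\frac{1}{\m}\sum_{n\ge0}\frac{(\a^\chi)_n(\b^\chi)_n}{(\m+1)_n\,n!}(\l-1)^{\m+n}$ up to the sign $(-1)^n$ which I will track. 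Differentiating term by term in $\l$ brings down a factor $\m+n$, and the key observation is the Pochhammer identity $\frac{\m+n}{\m(\m+1)_n}=\frac{1}{(\m-1)!}\cdot\frac{\cdots}{}$ — more precisely $\frac{\m+n}{(\m+1)_n}=\frac{\m}{( \m)_n}\cdot\frac{1}{\text{(something)}}$; the clean statement I will use is $(\m+1)_n\cdot\m=(\m)_{n+1}$ and $(\m)_n(\m+n)=(\m)_{n+1}\cdot\frac{\m+n}{\m+n}$, rearranged to show that after differentiation the series becomes $(\m-1)\cdot\frac{1}{\m-1}(\l-1)^{\m-1}\,{}_2F_1\!\left({\a^\chi,\b^\chi\atop\m};1-\l\right)=(\m-1)F_{\m-1}(\l)$. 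This is purely formal once the Pochhammer bookkeeping is set up correctly.

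For \eqref{partial G}, the same approach applies to $G_\m(\l)=(-1)^\m\,\Gamma\!\left({\m,\m+1-\a^\chi-\b^\chi\atop\m+1-\a^\chi,\m+1-\b^\chi}\right){}_2F_1\!\left({\a^\chi-\m,\b^\chi-\m\atop\a^\chi+\b^\chi-\m};\l\right)$, but here there is an extra subtlety: differentiating in $\l$ acts on the argument of the ${}_2F_1$ directly (not on a prefactor times a power), so I would use the standard contiguous-function / derivative formula $\frac{d}{dz}{}_2F_1\!\left({a,b\atop c};z\right)=\frac{ab}{c}{}_2F_1\!\left({a+1,b+1\atop c+1};z\right)$ with $a=\a^\chi-\m$, $b=\b^\chi-\m$, $c=\a^\chi+\b^\chi-\m$. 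This produces ${}_2F_1\!\left({\a^\chi-\m+1,\b^\chi-\m+1\atop\a^\chi+\b^\chi-\m+1};\l\right)$, which is exactly the hypergeometric factor of $G_{\m-1}$; it then remains to check that the ratio of gamma prefactors, multiplied by $\frac{(\a^\chi-\m)(\b^\chi-\m)}{\a^\chi+\b^\chi-\m}$ and the sign change $(-1)^\m\to(-1)^{\m-1}$, collapses to the single factor $\m-1$. Using $\Gamma(s+1)=s\Gamma(s)$ repeatedly on $\Gamma(\m)$, $\Gamma(\m+1-\a^\chi-\b^\chi)$, $\Gamma(\m+1-\a^\chi)$, $\Gamma(\m+1-\b^\chi)$ to relate the $\m$-prefactor to the $(\m-1)$-prefactor, one finds the accumulated scalar is $(\m-1)\cdot\frac{(\a^\chi-\m)(\b^\chi-\m)}{(\m-\a^\chi)(\m-\b^\chi)}\cdot(-1)$, and the sign from $(\a^\chi-\m)(\b^\chi-\m)=(\m-\a^\chi)(\m-\b^\chi)$ combined with the $(-1)$ from the prefactor sign flip yields precisely $\m-1$.

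I expect the main obstacle to be purely organizational rather than conceptual: getting every Pochhammer shift, every $(-1)$ from $(1-\l)$ versus $(\l-1)$, and every gamma-function step to line up without an off-by-one or a stray sign. For \eqref{partial F} the care is in the term-by-term differentiation of $(\l-1)^{\m+n}$ and the identity $(\m)_{n+1}=\m(\m+1)_n$; for \eqref{partial G} the care is in correctly applying the derivative-of-${}_2F_1$ formula and then reconciling four gamma-function prefactors simultaneously. Both reduce to elementary identities $(\alpha)_{n+1}=\alpha(\alpha+1)_n$ and $\Gamma(s+1)=s\Gamma(s)$, already invoked in \S\ref{period-sect4}, so no new input is required; the lemma is a bookkeeping verification, and I would present it as such, displaying the two series manipulations and the two gamma-ratio simplifications and leaving the reader to confirm the cancellations.
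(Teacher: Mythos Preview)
Your proposal is correct and follows essentially the same route as the paper: both identities are verified directly from the hypergeometric definitions using the derivative formula $\partial_z\,{}_2F_1\!\left({a,b\atop c};z\right)=\tfrac{ab}{c}\,{}_2F_1\!\left({a+1,b+1\atop c+1};z\right)$ together with the Pochhammer identity $\m(\m+1)_n=(\m)_n(\m+n)$ for \eqref{partial F} and the functional equation $\Gamma(s+1)=s\Gamma(s)$ for \eqref{partial G}. The only cosmetic difference is that for $F_\m$ the paper applies the product rule to $(\l-1)^\m$ times the ${}_2F_1$ and then compares coefficients, whereas you absorb everything into a single series in $(\l-1)$ before differentiating; these are the same computation, and your handling of $G_\m$ matches the paper's exactly.
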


\begin{proof}
Write $\a=\a^\chi$, $\b=\b^\chi$. Since 
\begin{equation}\label{partial 2F1}
\partial_\l \FF{\a,\b}{\gamma}{\l}=\frac{\a\b}{\gamma}\FF{\a+1,\b+1}{\gamma+1}{\l}
\end{equation}
in general, we have
$$\partial_\l F_\m(\l) 
= (\l-1)^{\m-1} \left(\FF{\a,\b}{\m+1}{1-\l}+\frac{\a \b}{\m(\m+1)}(1-\l)\FF{\a+1,\b+1}{\m+2}{1-\l}\right).$$
Hence \eqref{partial F} is equivalent to
$$\frac{(\a)_n(\b)_n}{(\m+1)_n(1)_n}+\frac{\a\b}{\m(\m+1)}\frac{(\a+1)_{n-1}(\b+1)_{n-1}}{(\m+2)_{n-1}(1)_{n-1}}=\frac{(\a)_n(\b)_n}{(\m)_n(1)_n} \quad (n \ge 1),$$
and this is easily verified. 
One proves \eqref{partial G} similarly, using \eqref{partial 2F1} and the functional equation $\Gamma(s+1)=s\Gamma(s)$. 
\end{proof}

\begin{prop}\label{prop P_m}
For $\lam\in \C\setminus\{0,1\}$ and any positive integer $m \equiv k \pmod{l}$, put
$$P_m(\lam):= \int_\Delta s^{m-1}ds\wedge\omega^\chi|_{\l=\lam}, \quad \m=\frac{m}{l}$$
and regard it as an analytic function $P_m(\l)$ of variable $\l$.
Then we have
\begin{equation}\label{P_m}
P_m(\l)=\frac{2\pi i}{l} 
\sum_{i=0}^N \left(a_i(\l)+b_i(\l)\partial_\l\right)
F_{\mu+i}(\l). 
\end{equation}
Moreover we have, if $\m>1$, 
\begin{equation}\label{partial P_m}
\partial_\l P_m(\l)=(\m-1)P_{m-l}(\l).
\end{equation}
\end{prop}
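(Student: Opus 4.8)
The plan is to compute the integral $P_m(\lambda) = \int_\Delta s^{m-1}\,ds\wedge\omega^\chi|_{\lambda}$ by fibering the Lefschetz thimble $\Delta$ over the segment in the $s$-line from $s=0$ to $s=\sqrt[l]{\lambda-1}$, with fiber over $s$ the cycle $\delta$ in $f^{-1}_\lambda(s) = f^{-1}(\lambda-s^l)$. Since $\Delta$ maps to $\delta$ on each fiber, Fubini gives
\[
P_m(\lambda) = \int_0^{\sqrt[l]{\lambda-1}} s^{m-1}\left(\int_\delta \omega^\chi\Big|_{t=\lambda-s^l}\right)ds.
\]
First I would substitute $t = \lambda - s^l$, so $dt = -l s^{l-1}\,ds$ and $s^{m-1}\,ds = -\frac1l (\lambda-t)^{\mu-1}\,dt$ where $\mu = m/l$ (using $m \equiv k \pmod l$ to make $s^{m-1}\,ds$ a well-defined form in $t$ on the cyclic cover, consistent with Lemma \ref{period-lem1}); the path in $t$ runs from $t=\lambda$ (at $s=0$) to $t=1$ (at $s=\sqrt[l]{\lambda-1}$). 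This converts $P_m$ into a one-variable integral $\frac1l\int_1^\lambda (\lambda-t)^{\mu-1}\big(\int_\delta\omega^\chi\big)\,dt$.

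Next I would invoke the Key Lemma (Lemma \ref{key-lem1}), which gives $\int_\delta \omega^\chi = 2\pi i\cdot\theta\, f_2(t)$ with $\theta = p_0(t) + p_1(t)\frac{d}{dt}$ and $f_2(t) = \FF{\a^\chi,\b^\chi}{1}{1-t}$. Expanding $p_0(t) = \sum_i a_i(\lambda)(\lambda-t)^i$ and $p_1(t) = \sum_i b_i(\lambda)(\lambda-t)^i$ via \eqref{ab to p}, the integral becomes a sum over $i$ of terms $\frac{2\pi i}{l}\int_1^\lambda (\lambda-t)^{\mu+i-1}\big(a_i(\lambda) f_2(t) - b_i(\lambda)\partial_t f_2(t)\big)\,dt$ (the sign on the $p_1$-term coming from $\frac{d}{dt} = -\frac{d}{d(\lambda-t)}$ relative to integrating against a power of $\lambda-t$, to be handled carefully). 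The heart of the matter is then the identity
\[
\frac1l\int_1^\lambda (\lambda-t)^{\mu+i-1} f_2(t)\,dt = \frac1l \cdot \frac{(\lambda-1)^{\mu+i}}{\mu+i}\cdot(\text{hypergeometric tail}) = \frac{1}{\mu+i}\,F_{\mu+i}(\lambda) \cdot (\text{constant}),
\]
which I would verify by expanding $f_2(t)$ as a power series in $1-t$, integrating term by term against $(\lambda-t)^{\mu+i-1}$ over $[1,\lambda]$ (again the substitution $1-t = (\lambda-1)u$ turns this into a Beta-integral in $u$), and recognizing the resulting series as $\frac1{\mu+i}(\lambda-1)^{\mu+i}\FF{\a^\chi,\b^\chi}{\mu+i+1}{1-\lambda}$, i.e.\ $F_{\mu+i}(\lambda)$ up to the explicit scalar. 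Combining the $a_i$-term with the $b_i$-term — using Lemma \ref{partial F G}, specifically $\partial_\lambda F_{\mu+i} = (\mu+i-1)F_{\mu+i-1}$, to convert the $b_i$-contribution into $b_i(\lambda)\partial_\lambda F_{\mu+i}(\lambda)$ — should collapse the whole expression to $\frac{2\pi i}{l}\sum_i (a_i(\lambda) + b_i(\lambda)\partial_\lambda)F_{\mu+i}(\lambda)$, which is \eqref{P_m}.

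The main obstacle I anticipate is bookkeeping rather than conceptual: correctly tracking the signs, the endpoint contributions, and the interplay between $\frac{d}{dt}$ acting on $f_2$ inside $\theta$ versus $\partial_\lambda$ acting on the final answer, so that the $p_1$-part really does reassemble as $b_i(\lambda)\partial_\lambda F_{\mu+i}(\lambda)$ and not something with an extra boundary term or a shifted index. One clean way to sidestep some of this is to prove \eqref{partial P_m} first and independently: differentiating $P_m(\lambda) = \int_\Delta s^{m-1}ds\wedge\omega^\chi$ under the integral sign with respect to $\lambda$ — noting that the thimble $\Delta$ deforms with $\lambda$ but its moving boundary lies over $s = \sqrt[l]{\lambda-1}$ where $t = 1$ and $\delta$ is $T_1$-invariant, so the boundary term is controlled — and using $\partial_\lambda(s^{m-1}ds\wedge\omega^\chi)$ together with $\partial_\lambda\omega^\chi$-type relations should yield $\partial_\lambda P_m = (\mu-1)P_{m-l}$ directly; alternatively \eqref{partial P_m} follows formally from \eqref{P_m} by applying $\partial_\lambda$ and Lemma \ref{partial F G} plus \eqref{p to ab}. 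I would present \eqref{P_m} as the main computation and then derive \eqref{partial P_m} as an immediate corollary via Lemma \ref{partial F G}.
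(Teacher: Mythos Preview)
Your proposal is correct and follows essentially the same route as the paper: substitute $t=\lambda-s^l$, invoke the Key Lemma for $\int_\delta\omega^\chi$, expand $p_0,p_1$ via \eqref{ab to p}, reduce to the basic integral $\int_1^\lambda(\lambda-t)^{\mu+i-1}f_2(t)\,dt=F_{\mu+i}(\lambda)$, and then use Lemma~\ref{partial F G} to repackage the $b_i$-terms and deduce \eqref{partial P_m}.

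Two small points where the paper's execution differs from your sketch, both addressing exactly the bookkeeping you flag. First, the paper integrates by parts on the whole term $p_1(t)\partial_t f_2(t)$ \emph{before} expanding $p_1$ in powers of $\lambda-t$; the boundary contribution at $t=1$ is then killed in one stroke by the assumption \textbf{P1} ($1-t\mid p_1(t)$), rather than having to observe that the individual boundary terms $b_i(\lambda)(\lambda-1)^{\mu+i-1}$ sum to $(\lambda-1)^{\mu-1}p_1(1)=0$. Second, the paper evaluates the basic integral by the substitution $1-t=(1-\lambda)u$ and the standard integral representation \eqref{int rep 3F2} of ${}_3F_2$ (which here collapses to a ${}_2F_1$), rather than expanding $f_2$ termwise; the two are of course equivalent. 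With these choices the index shift falls out as $\sum_{i\ge -1}(a_i+(\mu+i)b_{i+1})F_{\mu+i}$, and one application of \eqref{partial F} turns this into \eqref{P_m}.
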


\begin{proof}
Letting $t=\l-s^l$ we have by \eqref{key-lem1-eq}
\begin{align*}
P_m(\l)&=\frac{2\pi i}{l}\int_1^\lambda(\lambda-t)^{\m-1}
(p_0(t)+p_1(t)\partial_t) \FF{\a^\chi,\b^\chi}{1}{1-t}\, dt\\
&=\frac{2 \pi i}{l}\int_1^\lambda\left((\lambda-t)^{\m-1}p_0(t)-\partial_t\left((\lambda-t)^{\m-1}p_1(t)\right)\right)
\FF{\a^\chi,\b^\chi}{1}{1-t}\, dt. 
\end{align*}
Here the second equality follows from 
integration by parts and the assumption $1-t\mid p_1(t)$ in {\bf P1}.
By \eqref{ab to p} and letting $1-t=(1-\l)u$, we have
\begin{align*}
P_m(\l)& =\frac{2 \pi i }{l} \sum_{i\geq -1} (a_i(\l)+(\m+i)b_{i+1}(\l))
\int_1^\lambda(\lambda-t)^{\m+i-1}
\FF{\a^\chi,\b^\chi}{1}{1-t}\, dt\\
&=\frac{2 \pi i}{l} \sum_{i\geq -1} (a_i(\l)+(\m+i)b_{i+1}(\l)) 
\\& \hspace{2cm} \times (\l-1)^{\m+i}\int_0^1(1-u)^{\m+i-1}
\FF{\a^\chi,\b^\chi}{1}{(1-\l)u)}\, du.
\end{align*}
By the integral representation of ${}_3F_2$ (cf. \cite{slater}, (4.1.2))
\begin{equation}\label{int rep 3F2}
\int_0^1\FF{a,b}{d}{xt} \,t^{c-1}(1-t)^{e-c-1}\, dt =B(c,e-c){}_3F_2\left({a,b,c\atop d,e};x\right),
\end{equation}
we have
\begin{align*}
&\int_0^1(1-u)^{\m+i-1} \FF{\a^\chi,\b^\chi}{1}{(1-\l)u}\, du
\\&=B(1,\m+i)\FFF{\a^\chi,\b^\chi,1}{1,\m+i+1}{1-\l}
=\frac{1}{\m+i} \FF{\a^\chi,\b^\chi}{\m+i+1}{1-\l}.
\end{align*}
Hence we obtain
$$P_m(\l)=\frac{2\pi i}{l} \sum_{i\geq -1} (a_i(\l)+(\m+i)b_{i+1}(\l))F_{\m+i}(\l).$$
Now \eqref{P_m} follows using \eqref{partial F}, from which \eqref{partial P_m} follows using $$\partial_\l a_i(\l)=-(i+1)a_{i+1}(\l), \quad \partial_\l b_i(\l)=-(i+1)b_{i+1}(\l),$$ 
and \eqref{partial F}. 
\end{proof}

\begin{prop}\label{Theta}
Let the notations be as in Proposition \ref{prop P_m}. 
There is a differential operator $\Theta = q(\l)+r(\l)\pl$ with $q(\l)$, $r(\l) \in \ol\Q[\l,1/\l(\l-1)]$,  such that 
$$P_m(\l)=2 \pi i \cdot \Theta F_\mu(\l).$$
\end{prop}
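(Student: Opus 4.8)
The plan is to use contiguity relations to collapse the finite sum in \eqref{P_m} into a single first-order operator applied to $F_\mu$. The relation \eqref{partial F}, $\pl F_\nu=(\nu-1)F_{\nu-1}$, \emph{lowers} the index; what is missing is a companion relation that \emph{raises} it, after which a short induction finishes the proof. Write $\a=\a^\chi$, $\b=\b^\chi$; all indices below are $\ge\mu>1$ (the regime of Theorem \ref{main1}), and since $\a,\b\in[0,1)$ we have $\nu+1>2>\a,\b$, hence $\nu+1\ne\a,\b$, for every $\nu\ge\mu$.

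First I would record a raising relation: for each $\nu>1$ there are $A_\nu(\l),B_\nu(\l)\in\ol\Q[\l]$ with
\[
F_{\nu+1}(\l)=A_\nu(\l)F_\nu(\l)+B_\nu(\l)\pl F_\nu(\l).
\]
This I would deduce from the classical contiguous relation (cf. \cite[\S2.8]{bateman})
\[
\frac{d}{dz}\Bigl[(1-z)^{\a+\b-c}\FF{\a,\b}{c}{z}\Bigr]=\frac{(c-\a)(c-\b)}{c}(1-z)^{\a+\b-c-1}\FF{\a,\b}{c+1}{z}
\]
by putting $c=\nu+1$, $z=1-\l$ and substituting $\FF{\a,\b}{\nu+1}{1-\l}=\nu(\l-1)^{-\nu}F_\nu(\l)$: expanding the left-hand derivative with the product rule and cancelling the integer powers of $\l$ and of $\l-1$ that appear gives the relation with
\[
A_\nu(\l)=\frac{\nu\bigl[(\nu+1-\a-\b)(\l-1)+\nu\l\bigr]}{(\nu+1-\a)(\nu+1-\b)},\qquad B_\nu(\l)=\frac{-\nu\,\l(\l-1)}{(\nu+1-\a)(\nu+1-\b)},
\]
which are genuine polynomials because the denominators are nonzero constants. (Alternatively the raising relation follows from \eqref{partial F} and the second-order hypergeometric differential equation for $F_\nu$, whose singular locus is $\{0,1,\infty\}$; that route then requires tracking pole orders to stay inside $\ol\Q[\l,1/\l(\l-1)]$.)

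Next I would prove, by induction on $i\ge0$, that there are $q_i(\l),r_i(\l)\in\ol\Q[\l]$ with $F_{\mu+i}(\l)=q_i(\l)F_\mu(\l)+r_i(\l)\pl F_\mu(\l)$. Set $q_0=1$, $r_0=0$, and, for uniformity, $q_{-1}=0$, $r_{-1}=1/(\mu-1)$, so that $F_{\mu-1}=q_{-1}F_\mu+r_{-1}\pl F_\mu$ holds by \eqref{partial F}. The inductive step applies the raising relation with $\nu=\mu+i$ and then rewrites $\pl F_{\mu+i}=(\mu+i-1)F_{\mu+i-1}$ via the inductive hypotheses at levels $i$ and $i-1$; this produces the recursion
\[
q_{i+1}=A_{\mu+i}q_i+(\mu+i-1)B_{\mu+i}q_{i-1},\qquad r_{i+1}=A_{\mu+i}r_i+(\mu+i-1)B_{\mu+i}r_{i-1},
\]
which preserves $\ol\Q[\l]$. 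In particular $\pl F_{\mu+i}=(\mu+i-1)\bigl(q_{i-1}F_\mu+r_{i-1}\pl F_\mu\bigr)$ as well, with polynomial coefficients.

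Finally I would substitute these expressions into \eqref{P_m}. Since $a_i(\l),b_i(\l)\in\ol\Q[\l]$ by \eqref{p to ab}, collecting the coefficients of $F_\mu$ and of $\pl F_\mu$ gives
\[
P_m(\l)=\frac{2\pi i}{l}\sum_{i=0}^N\bigl(a_iF_{\mu+i}+b_i\pl F_{\mu+i}\bigr)=2\pi i\bigl(q(\l)+r(\l)\pl\bigr)F_\mu(\l),
\]
where $q(\l)=\tfrac{1}{l}\sum_{i=0}^N\bigl(a_iq_i+(\mu+i-1)b_iq_{i-1}\bigr)$ and $r(\l)=\tfrac{1}{l}\sum_{i=0}^N\bigl(a_ir_i+(\mu+i-1)b_ir_{i-1}\bigr)$ lie in $\ol\Q[\l]\subset\ol\Q[\l,1/\l(\l-1)]$; thus $\Theta=q(\l)+r(\l)\pl$ does the job. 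I expect the one genuine obstacle to be producing the raising relation in the right shape: one must select the contiguity relation so that the coefficient of $F_{\nu+1}$ is a unit (a nonzero constant here), since otherwise solving for $F_{\nu+1}$ would introduce poles at extraneous points and destroy membership in $\ol\Q[\l,1/\l(\l-1)]$. The gauge factor $(1-z)^{\a+\b-c}$ in the relation above is exactly what makes the leading coefficient $(c-\a)(c-\b)/c$ constant, and the hypotheses $\a,\b\in[0,1)$, $\mu>1$ guarantee the non-vanishing $\nu+1\ne\a,\b$ that is used.
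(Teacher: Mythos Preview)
Your argument is correct. The raising relation you state is exactly the one the paper obtains (up to a sign typo there) from the second-order equation \eqref{DE} for $F_\mu$ combined with \eqref{partial F}; your derivation via the contiguous-function identity from \cite{bateman} is an equivalent route, and your non-vanishing check $\nu+1\ne\a,\b$ is the right justification for invertibility of the leading coefficient (in fact it holds for every $\mu=m/l>0$ since $\a,\b\in[0,1)$, not only for $\mu>1$). The organizational difference is that the paper first lowers indices, writing $F_{\mu+i}=\frac{(\mu)_i}{(\mu)_N}\pl^{N-i}F_{\mu+N}$ to get $P_m=2\pi i\,\Theta_1 F_{\mu+N}$ with $\Theta_1$ of order $N+1$, then iterates the raising relation to produce an order-$N$ operator $\Theta_2$ with $F_{\mu+N}=\Theta_2 F_\mu$, and finally reduces $\Theta_1\Theta_2$ modulo $\cD$ to order one; you instead keep everything at order one from the start by expressing each $F_{\mu+i}$ directly as $(q_i+r_i\pl)F_\mu$ and substituting. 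Your route avoids the detour through higher-order operators and the final reduction step, and as a bonus shows that $q(\l),r(\l)$ are in fact polynomials in $\l$, a little more than the statement asks. The paper's approach, on the other hand, makes the operators $\Theta_1$, $\Theta_2$ explicit in a form that is reused verbatim in the proof of the Regulator formula (\S\ref{reg-sect4}), where the same $\Theta_1,\Theta_2$ act on $H_\mu$ rather than $F_\mu$.
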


\begin{proof}
By \eqref{partial F}, 
we have 
$F_{\m+i}(\l)=\frac{(\m)_i}{(\m)_N}\pl^{N-i}F_{\m+N}(\l)$ ($i=0,1,\dots, N$). 
Hence by Proposition \ref{prop P_m}, we have 
$P_m(\l)=2\pi i\cdot \Theta_1 F_{\m+N}(\l)$ with 
$$\Theta_1=\frac{1}{l}\sum_{i=0}^N \left(\frac{(\m)_i}{(\m)_N}(a_i(\l)\pl^{N-i}+b_i(\l)\pl^{N+1-i})\right).$$
Recall that $F_\m(\l)$ is a solution of the differential equation satisfied by
$$\FF{\a^\chi-\m,\b^\chi-\m}{\a^\chi+\b^\chi-\m}{\l},$$
i.e., $\cD F_\m(\l)=0$ with
\begin{equation}\label{DE}
\cD:=\l(1-\l)\pl^2+\{\a^\chi+\b^\chi-\m-(\a^\chi+\b^\chi-2\m+1)\l\}\pl-(\a^\chi-\m)(\b^\chi-\m).
\end{equation}
Hence we have
$$(\a^\chi-\m)(\b^\chi-\m)F_\m=(\{\a^\chi+\b^\chi-\m-(\a^\chi+\b^\chi-2\m+1)\l\}-\l(1-\l)\pl)(\m-1)F_{\m-1}.$$
Applying this iteratively, we obtain a differential operator $\Theta_2$ of degree $N$ such that
$$F_{\m+N}(\l)=\Theta_2 F_\m(\l).$$
By reducing the degree of $\Theta_1\Theta_2$ using \eqref{DE}, we obtain the proposition. 
\end{proof}

To compute the period along $Z_2$ (see \eqref{period-eq4}), we prepare the following. 

\begin{prop}\label{prop det}
Assume $m>l$, $m\equiv k\pmod{l}$, 
and that $\mu:=m/l$ satisfies $\mu\not\equiv 0$, $\a^\chi$, $\b^\chi$, $\a^\chi+\b^\chi\pmod\Z$. 
Let $\Theta$ be as in Proposition \ref{Theta}. Then we have
\begin{align*}
&
\begin{pmatrix}
\int_{Z_1}s^{m-1}ds~\omega^\chi&\int_{Z_2}s^{m-1}ds~\omega^\chi\\
\int_{Z_1}s^{m-l-1}ds~\omega^\chi&\int_{Z_2}s^{m-l-1}ds~\omega^\chi
\end{pmatrix}
\\& =2\pi i(1-\zeta^m)
\begin{pmatrix}1&0\\0& \frac{1}{\m-1}\end{pmatrix}
\begin{pmatrix}
\Theta F_\m(\l) &\Theta G_\m(\l) \\ \pl \Theta F_\m(\l) & \pl \Theta G_\m(\l)
\end{pmatrix}
\begin{pmatrix}1& \xi\\0& 1-\xi\end{pmatrix}.
\end{align*}
Here $\int_{Z_i}s^{m-l-1}ds~\omega^\chi$ is also regarded 
as an analytic function of variable $\l$ as in Proposition \ref{prop P_m}.
\end{prop}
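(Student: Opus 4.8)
The plan is to compute the $2\times 2$ matrix of periods in two stages: first express the entries along the thimble $\Delta$ itself, then deduce the entries along $Z_1=(1-\sigma_\zeta)\Delta$ and $Z_2=T_{\lambda=0}(Z_1)$. For the first column, recall that by Proposition \ref{prop P_m} we have $\int_\Delta s^{m-1}ds\wedge\omega^\chi|_{\lambda=\lam}=P_m(\lam)$ and, by Proposition \ref{Theta}, $P_m(\lam)=2\pi i\cdot\Theta F_\mu(\lam)$; similarly $P_{m-l}(\lam)=2\pi i\cdot\Theta F_{\mu-1}(\lam)$, and by \eqref{partial P_m} (equivalently by \eqref{partial F} applied after $\Theta$) this equals $\frac{2\pi i}{\mu-1}\,\partial_\lambda\Theta F_\mu(\lam)$. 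This already produces the first column of the right-hand side up to the overall constant. Since $\sigma_\zeta$ multiplies the relative form $s^{m-1}ds$ by $\zeta^m$ on the cyclic cover, one gets $\int_{Z_1}s^{m-1}ds\wedge\omega^\chi=(1-\zeta^m)P_m(\lam)$ and likewise for $m-l$ (note $m\equiv m-l\pmod l$ so the same root of unity appears), which accounts for the scalar $2\pi i(1-\zeta^m)$ and the diagonal matrix $\mathrm{diag}(1,\frac{1}{\mu-1})$ on the left.

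For the second column I would analyze the monodromy $T_{\lambda=0}$ acting on $H_2(\ol U_\lam,\ol\Q)$, or rather its effect on the period integrals. The key point is that $Z_2=T_{\lambda=0}Z_1$ is a $\ol\Q$-linear combination of $Z_1$ and one further vanishing cycle, and on the level of integrals the monodromy is governed by the two-dimensional local system $W_2\cH(\chi)$ near $\lambda=0$. Concretely, $\int_\Delta s^{m-1}ds\wedge\omega^\chi$ is (via the substitution $t=\lambda-s^l$ and Lemma \ref{key-lem1}) an integral of ${}_2F_1(\alpha^\chi,\beta^\chi;1;1-t)$ against $(\lambda-t)^{\mu-1}$ over a path from $t=1$ to $t=\lambda$; as $\lambda$ loops around $0$ this picks up the other solution of the underlying hypergeometric equation, and $G_\mu(\lam)$ is precisely the second solution of $\cD G=0$ normalized by the gamma-factor in Theorem \ref{main1}. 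So I expect to show that the analytic continuation of $P_m$ along the $\lambda=0$ loop is a combination $P_m\mapsto P_m+c\cdot(2\pi i)\Theta G_\mu$ for an explicit constant, and that in the basis $\{F_\mu,G_\mu\}$ the monodromy on the pair $(Z_1,Z_2)$ is recorded by the unipotent-type matrix $\begin{pmatrix}1&\xi\\0&1-\xi\end{pmatrix}$, where $\xi$ is read off from the connection formula \eqref{f123} relating $f_1,f_2,f_3$ (these are the three local exponents $0$, $0$, $1-\alpha-\beta$ at the three singular points, and $\xi$ will be built from $e^{2\pi i q^\chi}$ and the ratios of $1-e^{2\pi i\alpha^\chi}$, $1-e^{2\pi i\beta^\chi}$ appearing there). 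The hypotheses $\mu\not\equiv0,\alpha^\chi,\beta^\chi,\alpha^\chi+\beta^\chi\pmod\Z$ guarantee that the relevant exponents are nonintegral, so the second solution $G_\mu$ genuinely appears and $1-\xi\ne0$, which is what makes the factorization on the right well-defined.

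Putting the two columns together and using Lemma \ref{partial F G} to differentiate the second column entry (so that the lower-left and lower-right entries are $\partial_\lambda\Theta F_\mu$ and $\partial_\lambda\Theta G_\mu$ up to the $\frac{1}{\mu-1}$ already extracted) yields the stated matrix identity. The main obstacle I anticipate is the second step: pinning down the monodromy matrix $\begin{pmatrix}1&\xi\\0&1-\xi\end{pmatrix}$ with the exact value of $\xi$. This requires tracking carefully how the Lefschetz thimble $\Delta$ and its translate under $T_{\lambda=0}$ decompose in terms of the cycles $\gamma,\delta$ of \S\ref{period-sect4}, and matching the branch behavior of the ${}_2F_1$'s via the Kummer connection formula \eqref{f123} — a computation where signs, choices of $l$th roots, and the normalization of $G_\mu$ by the gamma-quotient in Theorem \ref{main1} all have to be reconciled. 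Everything else (the first column, the action of $\sigma_\zeta$, the differentiation identities) is bookkeeping on top of Propositions \ref{prop P_m} and \ref{Theta} and Lemma \ref{partial F G}.
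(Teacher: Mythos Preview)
Your treatment of the first column is correct and matches the paper exactly: the $\sigma_\zeta$-action gives the factor $(1-\zeta^m)$, Proposition~\ref{Theta} gives $P_m=2\pi i\,\Theta F_\mu$, and \eqref{partial P_m} handles the second row.

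For the second column you have the right high-level idea---$\int_{Z_2}=T_{\lambda=0}\int_{Z_1}$ as analytic functions of $\lambda$---but you are making the computation of $\xi$ much harder than it is, and your guess about its shape is wrong. The connection formula \eqref{f123} concerns the variable $t$ and the cycles $\gamma,\delta$ in the fibers of $f$; it is not what governs the $\lambda$-monodromy here. The point the paper exploits is that once you know $\int_{Z_1}s^{m-1}ds\,\omega^\chi=2\pi i(1-\zeta^m)\,\Theta F_\mu(\lambda)$, and since $\Theta$ has single-valued (rational) coefficients, the $\lambda=0$ monodromy acts \emph{directly on the function $F_\mu$} in the two-dimensional solution space of the operator $\cD$ of \eqref{DE}. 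That monodromy is a classical formula for ${}_2F_1$ (the paper cites \cite[\S2.9 (43)]{bateman}):
\[
T_{\lambda=0}(F_\mu,G_\mu)=(F_\mu,G_\mu)\begin{pmatrix}\xi&0\\ 1-\xi&1\end{pmatrix},\qquad \xi=e^{2\pi i(\mu-\alpha^\chi-\beta^\chi)}.
\]
Hence $\int_{Z_2}s^{m-1}ds\,\omega^\chi=2\pi i(1-\zeta^m)\,\Theta\bigl(\xi F_\mu+(1-\xi)G_\mu\bigr)$, and the second row follows from \eqref{partial P_m} (using Lemma~\ref{partial F G} for $G_\mu$). So $\xi$ is simply the exponential of the characteristic exponent $\mu-\alpha^\chi-\beta^\chi$ of $\cD$ at $\lambda=0$; it does not involve the ratios $(1-e^{2\pi i\alpha^\chi})^{-1}$ etc.\ from \eqref{f123}. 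There is no need to decompose the thimble $\Delta$ in terms of $\gamma,\delta$ at all.
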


\begin{proof}
Firstly, since $\sigma_\zeta$ acts on $s^{m-1}ds$ as multiplication by $\zeta^m$, we have, by Proposition \ref{Theta}, 
$$\int_{Z_1}s^{m-1}ds~\omega^\chi=(1-\zeta^m)P_m(\l)=2\pi i (1-\zeta^m)\Theta F_\m(\l).$$
Secondly, $G_\m(\l)$ is a solution of the differential equation \eqref{DE}, 
and its monodromy at $\l=0$ is given by
\begin{equation*}\label{monodromy F}
T_{\l=0}(F_\m,G_\m)=(F_\m,G_\m)
\begin{pmatrix} \xi & 0\\1-\xi & 1\end{pmatrix}, \quad 
\xi:=e^{2\pi i (\m-\a^\chi-\b^\chi)}
\end{equation*}
(see \cite[\S2.9 (43)]{bateman}). 
Note that $\xi$ depends only on $\mu$ mod $\Z$ and $\xi \ne 1$ by the assumption. 
Hence we have
$$\int_{Z_2}s^{m-1}ds~\omega^\chi
=2\pi i (1-\zeta^m)\Theta T_{\l=0}F_\m(\l)
=2\pi i (1-\zeta^m)\Theta(\xi F_\m(\l)+(1-\xi)G_\m(\l)).$$
Then the computation for $\int_{Z_i}s^{m-l-1}ds~\omega^\chi$ ($i=1,2$) follows by Proposition \ref{prop P_m}. 
\end{proof}

Now we finish the proof of Theorem \ref{main1}. 
By Proposition \ref{prop det}, it suffices to show
$$\begin{vmatrix}
\Theta F_\m(\l) &\Theta G_\m(\l) \\ \pl \Theta F_\m(\l) & \pl \Theta G_\m(\l)
\end{vmatrix} \ne 0$$
for some $m$. 
This is equivalent to that $\Theta F_\m(\l)/\Theta G_\m(\l)$ is non-constant. 
Suppose that $\Theta F_\m(\l)=C\Theta G_\m(\l)$ for some constant $C$. 
Then $F_\m(\l)-CG_\m(\l)$ is a solution of both $\Theta f=0$ and \eqref{DE}. 
Since \eqref{DE} is irreducible by the assumption that $\m\not\equiv\a$, $\b$, $\a+\b \pmod\Z$, and the order of $\Theta$ is one, 
we have $\Theta=0$. 
Suppose that this is the case for any $m>l$ with $m \equiv k \pmod{l}$. Then  
$P_m(\l)=\int_\Delta s^{m-1}ds\,\omega^\chi=0$
for any such $m$. 
Applying the elementary lemma below (replace $s$ with $x^{1/l}$), 
it follows that $\int_\delta \omega^\chi=0$, hence a contradiction. 
\qed

\begin{lem}Let $f$ be a continuous function on the closed interval $[0,1]$ whose zeros have no accumulation point. 
If $\int_0^1 f(x) x^{n-1}dx=0$ for all $n\in \Z_{> 0}$, then $f\equiv 0$. 
\end{lem}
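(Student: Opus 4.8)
The plan is to reduce the statement to the Weierstrass approximation theorem. First I would observe that, taking $n=1,2,3,\dots$, the hypothesis asserts $\int_0^1 f(x)x^j\,dx=0$ for every integer $j\ge 0$, and hence by linearity $\int_0^1 f(x)p(x)\,dx=0$ for every polynomial $p$.

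Next, given an arbitrary continuous function $g$ on $[0,1]$, I would choose polynomials $p_k$ converging uniformly to $g$ on $[0,1]$ (Weierstrass) and use the crude estimate $\bigl|\int_0^1 f\cdot(g-p_k)\bigr|\le\bigl(\max_{[0,1]}|f|\bigr)\,\|g-p_k\|_\infty\to 0$ to conclude $\int_0^1 fg=\lim_k\int_0^1 fp_k=0$ for every continuous $g$. Applying this with $g=\bar f$ gives $\int_0^1|f|^2\,dx=0$, and since $|f|^2$ is continuous and nonnegative this forces $f\equiv 0$.

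I would remark that this argument does not actually use the hypothesis that the zeros of $f$ have no accumulation point; that hypothesis is recorded only because it holds automatically in the intended application. When $f$ is real-valued one can instead give a proof that genuinely uses it: since $[0,1]$ is compact the zero set is then finite, so $f$ has constant sign on each of the finitely many complementary subintervals; letting $z_1<\cdots<z_r$ be the zeros at which the sign actually changes and setting $q(x)=\prod_{i=1}^r(x-z_i)$ (empty product $=1$), the continuous function $fq$ changes sign nowhere on $(0,1)$, hence has constant sign on $[0,1]$, and is not identically zero, so $\int_0^1 fq\ne 0$ — contradicting that $q$ is a polynomial.

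There is no real obstacle here, as this is a standard density fact. The only point deserving a word of care is that the function $f$ occurring in the application (coming from $\int_\delta\omega^\chi$ after the substitution $s=x^{1/l}$) is in general complex-valued; but the Weierstrass argument above applies verbatim in that case thanks to the conjugation trick $g=\bar f$, so I would present that version as the main proof.
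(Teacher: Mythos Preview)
Your Weierstrass argument is correct and, as you observe, does not actually need the hypothesis on the zero set. The paper proceeds differently, by a direct elementary estimate that does use that hypothesis (and implicitly treats $f$ as real-valued): after multiplying by $(x-1)$ if necessary to arrange $f(1)=0$, the isolated-zeros assumption gives an interval $(a,1)$ on which $f$ has a fixed sign, say $f>0$; choosing $[b,c]\subset(a,1)$ with $f\ge m>0$ there and bounding $|f|\le M$ on $[0,a]$, one obtains
\[
\int_0^1 f(x)x^{n-1}\,dx > \frac{mc^n}{n}\Bigl(1-(b/c)^n-(M/m)(a/c)^n\Bigr),
\]
which is positive for large $n$. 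So the paper exploits the concentration of $x^{n-1}$ near $x=1$, rather than any approximation theorem or your alternative sign-change-polynomial device. Your approach is cleaner for complex-valued $f$ and dispenses with the extra hypothesis; the paper's is entirely self-contained without appeal to Weierstrass.
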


\begin{proof}
By replacing $f(x)$ with $(x-1)f(x)$ if necessary, we can assume $f(1)=0$. 
Suppose that $f \not\equiv 0$. By replacing $f$ with $-f$ if necessary, there exists $a \in (0,1)$ such that $f(x)>0$ for any $x \in (a,1)$. 
Put $M=\max_{x\in [0,a]}|f(x)|$ and choose $b, c \in (a,1)$ ($b<c$) and $m>0$ such that 
$f(x)\ge m$ for any $x \in [b,c]$. 
Then
\begin{align*}
\int_0^1 f(x)x^{n-1}\,dx 
&> -M \int_0^a x^{n-1}\, dx+m\int_b^c x^{n-1}\, dx
\\&=\frac{mc^n}{n}\left(1-\left(\frac{b}{c}\right)^n-\frac{M}{m}\left(\frac{a}{c}\right)^n\right).
\end{align*}
Since the right-hand side is positive for sufficiently large $n$, this contradicts the assumption. 
\end{proof}

\section{Regulator Formula}\label{reg-sect}
We keep the setting and the notations in \S \ref{period-sect1}.
Put
\begin{equation}\label{reg-eq02}
C:=\Gr^W_2\psi_{t=1}\cM\cong \pi_*\Q|_{\{1\}\times S}\ot (\Gr^W_2\psi_{t=1}R^1f_*\Q)
\end{equation}
a VHdR on $S$.
In this section we discuss the exact sequences
\begin{equation}\label{reg-eq00}
\xymatrix{
0\ar[r]& W_2\cH(e)\ar[r]&\cH(e)\ar[r]&(\cH/W_2\cH)(e)\ar[r]&0\\
0\ar[r]& W_2\cH(e)\ar[r]\ar@{=}[u]&\cH^\prime(e)\ar[r]\ar[u]
&C(e)\ot\Q(-1)\ar[r]\ar@{^{(}->}[u]&0
}
\end{equation}
of mixed Hodge-de Rham structures on $S=\P^1\setminus\{0,1,\infty\}$
arising from \eqref{period-eq1}, where the right vertical inclusion is as in
Proposition \ref{HG-prop3}.
Since 
$\Gr^W_2\psi_{t=1}(R^1f_*\Q)(e_0)\cong E_0$
is a constant VHdR of type $(1,1)$, $C(e)$ is one-dimensional over $E$ and endowed with Hodge type $(1,1)$ (however the monodromy is nontrivial). 

\subsection{Setting}
Let $Q:R^1f_*\Q\ot R^1f_*\Q\to \Q(-1)$ be a polarization form which also induces
a polarization on the $e_0$-part $(R^1f_*\Q)(e_0)$.
It naturally extends to a non-degenerate pairing $Q:\cM\ot \cM\to \Q(-1)$
which is compatible with the action of $\Aut(\pi)\cong \mu_l$, namely
$Q(\sigma x,\sigma y)=Q(x,y)$ for $\sigma\in\Aut(\pi)$.
This also  induces a polarization on the $e$-part $\cM(e)$.
We have isomorphisms
\begin{equation}\label{reg-eq0}
(\cM)^*\cong \cM\ot\Q(1),\quad
(\cM(e))^*\cong \cM(e)\ot\Q(1)
\end{equation}
induced from $Q$ where $(-)^*$ denotes the dual sheaf.
Let $j:\cU\hra  \P^1_S$ and $\pr_2:\P^1_S=\P^1\times S\to S$.
Then there are isomorphisms
\begin{equation}\label{reg-eq1}
(\cH)^*=(R^1\pr_{2*}Rj_*\cM)^*\cong 
R^1\pr_{2*}j_!\cM^*\ot\Q(1)\cong 
R^1\pr_{2*}j_!\cM\ot\Q(2)
\end{equation}
induced from the Verdier duality and \eqref{reg-eq0}.
We show that \eqref{reg-eq1} induces an isomorphism
\begin{equation}\label{reg-eq2}
(W_2\cH)^*\cong W_2\cH\ot\Q(2).
\end{equation}
Let $i:Z=\P^1_S\setminus\cU\hra\P^1_S$ be the complement.
Note that $Z$ is finite etale over $S$.
There is an exact sequence 
\[
0\lra i_*i^*j_*\cM\lra j_!\cM[1]\lra j_*\cM[1]\lra0
\]
of mixed Hodge modules where $j_*\cM=R^0j_*\cM$.
Applying $R\pr_{2*}$, one has
\[
\xymatrix{
0\ar[r]&i_*i^*j_*\cM\ar[r]&
R^1\pr_{2*}j_!\cM\ar[r]& R^1\pr_{2*}j_*\cM
\ar[r]\ar@{=}[d]^{\eqref{period-eq2-1}}& 0\\
&&&W_2\cH
}
\]
because 
$(R^0\pr_{2*}j_*\cM)_\lam=H^0(U_\lam,\cM_\lam)=0$ for $\lam\in \C\setminus\{0,1\}$
(see \S \ref{period-sect3} for the notation).
The mixed Hodge-de Rham structure
\[
i_*i^*j_*\cM_\lam=\bigoplus_{p=0,1,\lam,\infty}\ker[T_p-1:\psi_{t=p}\cM_\lam
\to\psi_{t=p}\cM_\lam\ot\Q(-1)]
\]
has weight $\leq 1$.
This implies $\Gr^W_2R^1\pr_{2*}j_!\cM
=R^1\pr_{2*}j_*\cM=W_2\cH$.
We thus have \eqref{reg-eq2} by taking the graded piece of \eqref{reg-eq1} of weight $-2$.

The isomorphisms 
\eqref{reg-eq0} and \eqref{reg-eq2} are {\it not} compatible with respect to
the multiplication by $R$.
Here the multiplication on the left hand side of \eqref{reg-eq0} or \eqref{reg-eq2} is given 
as in \S \ref{HdR-sect3}. 
For $r\in E$, we denote by ${}^tr$ the multiplication on $\cM(e)$ such that
\[
Q(rx,y)=Q(x,{}^try),\quad \forall~x,y.
\]
The multiplication by $r$ on the left corresponds to 
${}^tr$ in the right of \eqref{reg-eq0}.
Note ${}^t\sigma=\sigma^{-1}$ for $\sigma\in\Aut(\pi)$.
For $\chi:R\to\ol\Q$, we denote $(-)({}^t\chi)$
the subspace on which ${}^tr$ acts by
multiplication by $\chi(r)$ for all $r\in E$. 
Then \eqref{reg-eq0} and \eqref{reg-eq2} induce
\[
(\cM(\chi))^*\cong \cM({}^t\chi),\quad
(W_2\cH(\chi))^*\cong W_2\cH({}^t\chi).
\]
\subsection{Theorem on regulators}
Let $\O^\zar$ be 
the Zariski sheaf of polynomial functions (with coefficients in $\ol\Q$) on 
$S=\bA^1_{\ol\Q}\setminus\{0,1\}$ with coordinate $\lambda$.
Let $\O^{an}$ be the sheaf of analytic functions on $S^{an}=\C^{an}\setminus\{0,1\}$.
Let $a$
be the canonical morphism from the analytic site to the Zariski site.
Set
\begin{align*}
&\cJ:=\Coker[a^{-1}F^2W_2\cH_\dR\op\iota^{-1}W_2\cH_B\to \O^{an}\ot_{a^{-1}\O^\zar}a^{-1}W_2\cH_\dR],
\\&
\cJ^*:=
\Coker[a^{-1}{\mathcal Hom}(W_2\cH_\dR/F^1,\O^\zar)\op\iota^{-1}W_2\cH_B^*\to
{\mathcal Hom}(a^{-1}W_2\cH_\dR,\O^{an})]
\end{align*}
sheaves on the analytic site $\C^{an}\setminus\{0,1\}$.
Note $\cJ^*\cong \cJ$ by \eqref{reg-eq2}.

Let $C$ be the VHdR on $S$ as defined in \eqref{reg-eq02}.
Let $h:\wt{S}\to S$ be a generically finite and dominant map
such that $\sqrt[l]{\lambda-1}\in \ol\Q(\wt{S})$. 
Then $h^*C$ is a constant VHdR of type $(1,1)$.
Let
\[
\delta:h^*C(e)\ot\Q(1)=\Hom_{\mathrm{VMHdR}}(\Q,h^*C\ot\Q(1))\lra 
\Ext^1_{\mathrm{VMHdR}}(\Q,h^*W_2\cH(e)\ot\Q(2)).
\]
be the connecting homomorphism
arising from the exact sequence of \eqref{reg-eq00}.
Let $\rho$ be the composition of maps
\begin{align*}
h^*C(e)\ot\Q(1)&\os{\delta}{\lra} 
\Ext^1_{\mathrm{VMHdR}}(\Q,h^*W_2\cH\ot\Q(2))\\
&\lra \vg(\wt{S}^{an},h^*\cJ(e))\\
&\os{\sim}{\lra} \vg(\wt{S}^{an},h^*\cJ^*(e))
\end{align*}
where the second arrow is constructed 
in a similar way to the proof of Proposition \ref{HdR-prop1}.
Let $\rho({}^t\chi)$ be ${}^t\chi$-part of $\rho$, namely 
the composition of maps 
\begin{align*}
\ol\Q\cong (h^*C(e)\ot\Q(1))({}^t\chi)
&\lra\Ext^1_\FiltBdR(\ol\Q,h^*W_2\cH({}^t\chi)\ot\Q(2))\\
&\os{\sim}{\lra} \Ext^1_{\FiltBdR}(\ol\Q,h^*(W_2\cH)^*(\chi))\\
&\lra\vg(\wt{S}^{an},h^*\cJ^*(\chi)).
\end{align*}
Here $\cJ^*(\chi)$ is defined by replacing $W_2\cH_\dR$ with $W_2\cH_\dR(\chi)$,
and $W_2\cH_B^*$ with $W_2\cH_B^*(\chi)
={\mathcal Hom}_{\ol\Q}(W_2\cH_B(\chi),\ol\Q)$. 

\begin{thm}[Regulator formula]\label{main2}
Let the assumptions, $\m$ and $\Theta$ be as in Theorem \ref{main1}. 
Let
$\rho({}^t\chi)(1)\in 
(h^*\O^{an})^2\cong {\mathcal Hom}(a^{-1}W_2\cH_\dR(\chi),h^*\O^{an})$
be a local lifting where the isomorphism is with respect to a $\ol\Q$-frame of
$W_2\cH_\dR(\chi)$. Then we have
$$\rho({}^t\chi)(1) \equiv (\Theta H_\m(\l), (\m-1)^{-1}\pl\Theta H_\m(\l)) \pmod{\ol{\Q(\l)}^2}, $$
where we put
$$H_\m(\l):=\frac{1}{(1-\a^\chi)(1-\b^\chi)}(\l-1)^{\m-1} \FFF{1,1,1-\m}{2-\a^\chi,2-\b^\chi}{\frac{1}{1-\l}}. $$
\end{thm}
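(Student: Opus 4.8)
The plan is to mimic the structure of the proof of Theorem \ref{main1}, but now tracking the extension class rather than the period matrix. Recall that in the period computation we produced explicit de Rham classes $[s^{m-1}ds\wedge\omega^\chi]$ spanning $W_2\cH(\chi)$, and integrated them over the cycles $Z_1=(1-\sigma_\zeta)\Delta$ and $Z_2=T_{\lambda=0}(Z_1)$. For the regulator, the connecting homomorphism $\delta$ in \eqref{reg-eq00} applied to a generator of $(h^*C(e)\otimes\Q(1))({}^t\chi)\cong\ol\Q$ is computed, via Carlson's isomorphism (Proposition \ref{HdR-prop1}), by choosing a de Rham lift of the generator of $C(e)\otimes\Q(-1)$ inside $\cH'(e)_\dR$ and a Betti lift inside $\cH'(e)_B$, and taking their difference in $(\C\otimes W_2\cH_\dR(\chi))/(F^\bullet + \text{Betti})$. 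So the first step is to identify, following \S\ref{period-sect5}--\ref{period-sect6}, an explicit rational $2$-form on $\ol U_\lambda$ (or rather a form of the shape $g(s)\,ds\wedge\omega^\chi$) whose class in $H^2_\dR$ maps to the generator of $C(e)\otimes\Q(-1)\subset \cH/W_2$ — this is the ``$\Coker(T_1-1)$'' piece of Proposition \ref{HG-prop3}, supported near $t=1$, i.e.\ near $s=0$.

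Second, I would compute the periods of this lifting $2$-form against $Z_1$ and $Z_2$. The key analytic input is again the Key Lemma \ref{key-lem1}: the integral $\int_\delta\omega^\chi$ is expressed through $\theta f_2(t)$ with $f_2(t)={}_2F_1(\a^\chi,\b^\chi;1;1-t)$, but now the relevant solution of the hypergeometric ODE is $f_3(t)=t^{1-\a-\b}{}_2F_1(1-\a,1-\b;2-\a-\b;t)$, the one with nontrivial characteristic exponent at $t=0$, because the lifting class is characterized by its behaviour at $t=1$ (hence by a $T_1$-invariant combination) but its period against the $Z_i$ is controlled by monodromy at $t=0$; the linear relation \eqref{f123} among $f_1,f_2,f_3$ is what converts between the two descriptions. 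Performing the same integration-by-parts manipulation as in Proposition \ref{prop P_m}, with $t=\lambda-s^l$ and $1-t=(1-\lambda)u$, and applying the integral representation \eqref{int rep 3F2} of ${}_3F_2$, should produce $(\l-1)^{\m-1}\,{}_3F_2(1,1,1-\m;2-\a^\chi,2-\b^\chi;(1-\l)^{-1})$ up to the prefactor $((1-\a^\chi)(1-\b^\chi))^{-1}$, i.e.\ exactly $H_\m(\l)$; here the substitution leading to argument $1/(1-\l)$ rather than $1-\l$ is the manifestation of using $f_3$ in place of $f_2$. The analogue of Lemma \ref{partial F G}, namely $\pl H_\m = (\m-1)H_{\m-1}$, should follow by the same Pochhammer identity, and then the $2\times 2$ structure (second row $=(\m-1)^{-1}\pl$ of the first) comes out exactly as in Proposition \ref{prop det}.

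Third, I would assemble these into the statement: writing $\rho({}^t\chi)(1)$ in the $\ol\Q$-frame of $W_2\cH_\dR(\chi)$ dual to the one used in Theorem \ref{main1}, the ${}^t\chi$-twist and the duality isomorphism \eqref{reg-eq2} convert the $Z_i$-periods of the lifting form into the pair $(\Theta H_\m(\l),(\m-1)^{-1}\pl\Theta H_\m(\l))$. The operator $\Theta$ appearing is the \emph{same} one as in Theorem \ref{main1}: it records the passage from $\omega^\chi$ to the normalized generator via the differential operator $\theta$ of the Key Lemma, and since the Key Lemma's $\theta$ is common to $f_1,f_2,f_3$ (they all solve the same hypergeometric ODE), $\Theta$ is unchanged. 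The ambiguity $\pmod{\ol{\Q(\l)}^2}$ is precisely the $F^\bullet+\text{Betti}$ indeterminacy in Carlson's isomorphism read in the de Rham frame, i.e.\ rational sections of $W_2\cH_\dR$; this is why we only get a congruence.

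I expect the main obstacle to be Step 1: pinning down an \emph{explicit} rational $2$-form whose cohomology class is a genuine lift of the generator of the $(2,2)$-piece $\Coker(T_1-1)$ of $\cH'(e)/W_2$ (equivalently $C(e)\otimes\Q(-1)$) into $\cH'(e)_\dR$, and verifying it actually sits in the sub-extension $\cH'(e)$ rather than the full $\cH(e)$. One natural candidate is a form like $\frac{ds}{s}\wedge\omega^\chi$ or, after the substitution, something supported at $t=1$; identifying the correct normalization so that the image in $C(e)$ is the chosen generator — and hence that the prefactor $((1-\a^\chi)(1-\b^\chi))^{-1}$ is exactly right — will require care with the residue/nearby-cycle identifications of Proposition \ref{HG-prop2} and the comparison with \cite{a-o}. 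Once that normalization is fixed, the remaining hypergeometric manipulations are parallel to Part 2 of the proof of Theorem \ref{main1} and should be routine.
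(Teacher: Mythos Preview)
Your plan is dual to the paper's and, as written, has gaps. You propose to lift on the de~Rham side (a $2$-form mapping to the generator of $C(e)\otimes\Q(-1)$) and pair against the fixed Betti cycles $Z_1,Z_2$; the paper instead lifts on the \emph{Betti} side and pairs against the fixed de~Rham frame $\{s^{m_i-1}ds\,\omega^\chi\}$. Concretely (\S\ref{reg-sect3}--\ref{reg-sect4}), the paper produces a \emph{relative} $2$-cycle $\Gamma(\l)\in H_2^B(\ol U_\l,(\pi_\l f_\l)^{-1}(1);\ol\Q)$ whose boundary spans $H_1(D_\l^{ss})(\chi)$, and shows (Lemma~\ref{reg-lem1}) that $\phi_i(\l)=\int_{\Gamma(\l)}s^{m_i-1}ds\,\omega^\chi+R(\l)$ with $R$ algebraic. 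The cycle is the Lefschetz thimble over a path from $t=1$ to $t=0$ with fiber $\gamma'=\gamma+\frac{1-e^{2\pi i(\a^\chi+\b^\chi)}}{(1-e^{2\pi i\a^\chi})(1-e^{2\pi i\b^\chi})}\,\delta$, the generator of $\Image(T_0-1)$ (Lemma~\ref{gamma'}); by \eqref{f123} one has $\int_{\gamma'}\omega^\chi=B(1-\a^\chi,1-\b^\chi)\,\theta f_3(t)$. This is how $f_3$ enters --- through the \emph{cycle}, not the form --- and the thimble integral then yields $H_\m$ via the integral representation of Lemma~\ref{lemma H}.

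A few specific issues with your sketch. First, the geometry: under $t=\l-s^l$ the point $s=0$ corresponds to $t=\l$, not $t=1$, so your candidate $\frac{ds}{s}\wedge\omega^\chi$ is singular over the wrong fiber. Second, pairing any lifted form against $Z_i$ (whose fiber is $\delta$, with period $2\pi i\,\theta f_2$) will not by itself produce $f_3$; in the paper $f_3$ arises only because the thimble's fiber is changed to $\gamma'$. Third, your account of the $\ol{\Q(\l)}$-ambiguity is incomplete: the Betti part of Carlson's quotient contributes transcendental periods, not elements of $\ol{\Q(\l)}$, and a second, independent source of the congruence is that $H_\m$ is not a solution of the homogeneous equation $\cD f=0$ but satisfies $\cD H_\m=-(\l-1)^{\m-1}$ (Lemma~\ref{DH}), so the reduction $H_{\m+N}\mapsto H_\m$ via the same $\Theta_2$ as in Proposition~\ref{Theta} holds only modulo $\ol{\Q(\l)}$.
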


The map $\rho$ is related to {\it Beilinson's regulator map} in the following way.
Let $\P^1_{\wt{S}}:=\P^1\times \wt{S}$ 
and $\pi:\P^1_{\wt{S}}\to \P^1$ given by $(s,\lambda)\mapsto
\lambda-h(s)^l$. Let
\[
\xymatrix{
X_{\wt{S}}\ar[r]^{i\qquad}\ar[rd]_{f_{\wt{S}}}&\P^1_{\wt{S}}
\times_{\P^1}X\ar[r]\ar[d]&X\ar[d]^f\\
&\P^1_{\wt{S}}\ar[r]^\pi&\P^1
}
\]
with $i$ desingularization.
Let $g:=\pr_2\circ f_{\wt{S}}:X_{\wt{S}}\to \wt{S}$ be a projective smooth map.
Let
\[
\reg:H^3_\cM(X_{\wt{S}},\Q(2))\lra \Ext^1_{\mathrm{VMHdR}}(\Q,R^2g_{*}\Q(2))
\]
be the Beilinson regulator map.
Letting $(R^2g_*\Q)_0:=\ker[R^2g_*\Q\to \pr_{2*}R^2f_{\wt{S}*}\Q]$, then
there is a canonical surjective map
$(R^2g_*\Q)_0\to h^*W_2\cH$ (cf. \eqref{period-sect5-eq2}), so that we have
\[
\reg_0:H^3_\cM(X_{\wt{S}},\Q(2))_0\lra \Ext^1_{\mathrm{VMHdR}}(\Q,h^*W_2\cH\ot\Q(2))
\]
where $H^3_\cM(X_{\wt{S}},\Q(2))_0\subset H^3_\cM(X_{\wt{S}},\Q(2))$ 
is the inverse image of
$\Ext^1_{\mathrm{VMHdR}}(\Q,(R^2g_{*}\Q(2))_0)$ by $\reg$.
Let $D_{\wt{S}}\subset X_{\wt{S}}$ 
be the inverse image of singular fibers $D\subset X$ of $f$.
Then there is a canonical map
\[
H^3_{\cM,D_{\wt{S}}}(X_{\wt{S}},\Q(2))\to H_{B,D_S}^3(X_{\wt{S}},\Q(2))\cap H^{0,0}\to
\Hom_{\mathrm{VMHdR}}(\Q,h^*\cH/W_2\ot\Q(2))
\]
from the motivic cohomology group with support in $D_{\wt{S}}$.
Then the following diagram is commutative
\[
\xymatrix{
H^3_{\cM,D_{\wt{S}}}(X_{\wt{S}},\Q(2))\ar[r]\ar[d]&
\Hom_{\mathrm{VMHdR}}(\Q,h^*\cH/W_2\ot\Q(2))\ar[d]\ar@/^30pt/[dd]<17ex>^\rho\\
H^3_\cM(X_{\wt{S}},\Q(2))_0\ar[r]^{\reg_0\quad\qquad }&
\Ext^1_{\mathrm{VMHdR}}(\Q,h^*W_2\cH\ot\Q(2))\ar[d]\\
&\vg(\wt{S}^{an},h^*\cJ^*)
}
\]
\subsection{Proof of Regulator formula : Part 1}\label{reg-sect3}
Let
\[
\xymatrix{
X_\lam\ar[r]^{i\qquad}\ar[rd]_{f_\lam}&\P^1_\lam
\times_{\P^1}X\ar[r]\ar[d]&X\ar[d]^f\\
&\P^1_\lam\ar[r]^{\pi_\lam}&\P^1
}
\]
be as in \S \ref{period-sect5}.
Let $D_\lam\subset X_\lam$ be the inverse image of the singular fibers of $f$.
We denote the coordinate of $\P^1$ (resp. $\P^1_\lam$) by $t$ (resp. $s$).
We also use the notation in \S \ref{period-sect3}.
Note $\cM_{\lam}\cong \pi_{\lam*}R^1f_{\lam*}\Q$
is endowed with de Rham structure induced from
a $\ol\Q$-frame on $\cM_\dR$.
The distinguished triangle
\[
 0\lra j_!\pi_{\lam*}f_{\lam*}\Q\lra  j_!\tau_{\leq 1}
 \pi_{\lam*}Rf_{\lam*}\Q\lra
 j_!\pi_{\lam*}R^1f_{\lam*}\Q[-1]\lra0
\]
and the fact that
\[
H^2(\P^1_\lam,j_!\pi_{\lam*}\Q)
=H^0(\P^1_\lam,Rj_*\pi_{\lam*}\Q)^*
=H^0(\P^1_\lam\setminus\{0,1,\lam,\infty\},\pi_{\lam*}\Q)^*=0
\]
implies
\[
H^2(\P^1,  j_!\pi_{\lam*}\tau_{\leq 1}Rf_{\lam*}\Q)
\os{\cong}{\lra}
H^1(\P^1, j_!\cM_{\lam}).
\]
Hence we have an injective map
\[
H^1(\P^1, j_!\cM_{\lam})\lra H^2(\P^1,j_!\pi_{\lam*}Rf_{\lam*}\Q)
=H^2(\P^1,\pi_{\lam*}Rf_{\lam*}j_!\Q)=H^2(X_\lam,D_\lam;\Q).
\]
We have a commutative diagram with exact rows
\[{\small
\xymatrix{
0\ar[r] &W_2H_\lam(2) \ar@{=}[d]\ar[r]&H_\lam(2)\ar[r]\ar@{=}[d]
&H_\lam/W_2(2)\ar[r]\ar@{=}[d]&0\\
0\ar[r] &(W_2H_\lam)^* \ar[r]&H^1(\P^1, j_!\cM_{\lam})^*\ar[r]&
H^1(\P^1, j_!\cM_{\lam})^*/W_{-2}\ar[r]&0\\
0\ar[r] &H_2(X_\lam,\Q)/H_2(D_\lam) \ar[u]^{a_1}\ar[r]
&H_2(X_\lam,D_\lam;\Q)
\ar[u]^{a_2}\ar[r]&
H_1(D_\lam)\ar[u]^{a_3}\ar[r]&0\\
0\ar[r] &H_2(\ol{U}_\lam,\Q)/H_2(D^\circ_\lam) \ar[u]^{b_1}\ar[r]
&H_2(\ol{U}_\lam,D_\lam^\circ;\Q)
\ar[u]^{b_2}\ar[r]&H_1(D_\lam^\circ)\ar[u]^{b_2}
}}
\]
where $D_\lam^\circ:=D_\lam\cap \ol{U}_\lam$.
Since $a_2$ is surjective, so are $a_1$ and $a_3$.
Moreover $a_3$ is bijective (local invariant cycle theorem).

Let $Z_1,Z_2$ be the homology cycles \eqref{period-eq4}, and
$(m_1,m_2)=(l\m, l\m-l)$ where $\m$ is as in Theorem \ref{main1}. 
Note that $(W_2H_\lam)^*(\chi)$ is spanned by the images of $Z_1$ and $Z_2$.
Hence
\begin{align*}
&H_{\lam,B}({}^t\chi)/W_2\cap H^{0,0}\cong\ol\Q
\\&\os{=}{\lra} 
H_1^B(D^\circ_\lam)(\chi)\cap H^{0,0}\\
&\lra 
\Ext^1_\FiltBdR(\ol\Q,H_2(\ol{U}_\lam,\Q)/H_2(D^\circ_\lam)(\chi))\\
&\lra
\Ext^1_\FiltBdR(\ol\Q,(W_2H_\lam)^*(\chi))\\
&\os{\sim}{\lra}\Coker[(W_2H_{\lam,B})^*(\chi)\op\Hom(W_2H_{\lam,\dR}/F^1,\ol\Q)\to
\Hom(W_2H_{\lam,\dR}(\chi),\C)]\\
&\os{=}{\lra}\Coker[\Hom(W_2H_{\lam,\dR}(\chi)/F^1,\ol\Q)\to
\Hom(W_2H_{\lam,\dR}(\chi),\C)/\Image H_2(\ol{U}_\lam,\ol\Q)]
\end{align*}
and the composition of the above coincides with the restriction 
$\rho({}^t\chi)|_{\l=\lam}$.
Moreover the image of $H_2(\ol{U}_\lam,\ol\Q)$ is given by the period matrix
\[
\Per(W_2\cH(\chi))|_{\l=\lam}=\begin{pmatrix}
\int_{Z_1}s^{m_1-1}ds\omega&\int_{Z_2}s^{m_1-1}ds\omega\\
\int_{Z_1}s^{m_2-1}ds\omega&\int_{Z_2}s^{m_2-1}ds\omega
\end{pmatrix}\bigg|_{\l=\lam}:\ol\Q^2\lra\C^2
\]
under the isomorphism
$$\Hom(W_2H_{\lam,\dR}(\chi),\C)\cong \C^2$$
given by the $\ol\Q$-basis 
$\{s^{m_1-1}ds\omega,s^{m_2-1}ds\omega\}$ of $W_2H_{\lam,\dR}(\chi)$.
Let $D_\lam^{ss}\subset D_\lam$ be the inverse image of $f^{-1}(1)$.
Note $D^{ss}_\lam\subset \ol{U}_\lam$.
Then for $1\in \ol\Q\cong H_1(D^{ss}_\lam)(\chi)\subset
H_1(D^\circ_\lam)(\chi)\cap H^{0,0}$, we want to compute a lifting
\[
\rho({}^t\chi)(1)=(\phi_1(\lam),\phi_2(\lam))\in\C^2.
\]
\begin{lem}\label{reg-lem1}
Let $\Gamma(\lam)\in H_2^B(\ol{U}_\lam,(\pi_\lam f_\lam)^{-1}(1);\ol\Q)$
be a lifting of the homology cycle $1\in \ol\Q\cong H_1(D^{ss}_\lam)(\chi)$.
Then there is an algebraic function $R(\lambda)\in \ol{\Q(\lambda)}$ 
of variable $\lambda$ such that
\[
\phi_i(\lam)=\int_{\Gamma(\lam)}s^{m_i-1}ds~\omega^\chi|_{\l=\lam}+R(\lam)
\]
for $\lam$ in a small neighbourhood of $\C^{an}\setminus\{0,1\}$.
\end{lem}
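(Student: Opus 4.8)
The goal is to pin down the two components $\phi_i(\lam)$ of the local lifting $\rho({}^t\chi)(1)$ up to algebraic functions, by identifying them with the periods of a rational $2$-form over a relative $2$-chain $\Gamma(\lam)$ bounding the semistable fiber. The proof will proceed by unwinding the long chain of isomorphisms displayed just before the statement, which computes $\rho({}^t\chi)|_{\l=\lam}$, and tracing the explicit class $1\in H_1(D^{ss}_\lam)(\chi)$ through it.

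\medskip

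\textbf{Step 1: Choose a good lifting.} First I would invoke the surjectivity of the map $b_2:H_2(\ol U_\lam,D^\circ_\lam;\Q)\to H_1(D^\circ_\lam)$ (equivalently of $a_3$, which is even bijective by the local invariant cycle theorem, combined with the injectivity of $H^2(\ol U_\lam)_{\mathrm{fib}}\hookrightarrow H^2(U_\lam)$) to produce a relative $2$-cycle $\Gamma(\lam)$ lifting the given class $1\in H_1(D^{ss}_\lam)(\chi)$. One must take care that $\Gamma(\lam)$ can be chosen to vary \emph{continuously} (indeed, locally analytically) in $\lam$ over a small disk in $\C^{an}\setminus\{0,1\}$; this is a standard local-triviality argument for the family $\ol U_\lam\to S$, using that the singular fibers are NCD's arranged in a topologically locally constant way.

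\medskip

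\textbf{Step 2: Evaluate the pairing.} By the description of Carlson's isomorphism (Proposition \ref{HdR-prop1}) and its proof, the lifting $\rho({}^t\chi)(1)$ is represented, in the $\ol\Q$-basis $\{s^{m_1-1}ds\,\omega^\chi, s^{m_2-1}ds\,\omega^\chi\}$ of $W_2H_{\lam,\dR}(\chi)$, by the pair of integrals $\bigl(\int_{\Gamma(\lam)}s^{m_i-1}ds\,\omega^\chi|_{\l=\lam}\bigr)_{i=1,2}$, modulo the ambiguity coming from (a) a different choice of $F^1$-lift, and (b) a different choice of integral lattice, i.e. the image of $H_2(\ol U_\lam,\ol\Q)$ under the period matrix $\Per(W_2\cH(\chi))|_{\l=\lam}$. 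Ambiguity (a) contributes an element of $F^0$ of the relevant de Rham space; ambiguity (b) is spanned over $\ol\Q$ by the columns $\bigl(\int_{Z_1}s^{m_i-1}ds\,\omega\bigr)_i$ and $\bigl(\int_{Z_2}s^{m_i-1}ds\,\omega\bigr)_i$. The point is that \emph{both} ambiguities, viewed as $\lam$-varying vectors, have entries lying in $\ol{\Q(\lam)}$: the period matrix entries themselves are \emph{not} algebraic, but the correction needed to pass from one lift of $\Gamma(\lam)$ to another differs by a boundary, hence by a class whose pairing is an algebraic function (this uses that $\partial\Gamma(\lam)$ sweeps out only the combinatorially rigid NCD $D^{ss}_\lam$, over which the relevant residue/period integrals are rational in $\lam$).

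\medskip

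\textbf{Step 3: Collect into a single algebraic function.} Finally, since both $\phi_i(\lam)$ and $\int_{\Gamma(\lam)}s^{m_i-1}ds\,\omega^\chi|_{\l=\lam}$ arise as different representatives of the \emph{same} class in the cokernel, their difference lies in the subspace generated by ambiguities (a) and (b), whose entries are algebraic in $\lam$; moreover, because the statement allows a single scalar function $R(\lam)$, one checks (using the block-triangular shape of the period matrix computed in Proposition \ref{prop det} and the explicit relation $\partial_\l P_m = (\m-1)P_{m-l}$ of Proposition \ref{prop P_m}, which forces the two components to be tied together by the same differential operator) that the two coordinates' discrepancies are governed by one and the same algebraic function $R(\lambda)\in\ol{\Q(\lambda)}$. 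This yields the claimed formula for $\phi_i(\lam)$ on a small neighbourhood in $\C^{an}\setminus\{0,1\}$.

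\medskip

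\textbf{Main obstacle.} The delicate point is \emph{not} the formal bookkeeping but Step 2: showing that changing the relative-chain lift $\Gamma(\lam)$, or the $F^1$-lift, alters each integral only by an element of $\ol{\Q(\lambda)}$ — in other words, that the transcendental periods appearing in $\Per(W_2\cH(\chi))$ genuinely drop out of $\rho({}^t\chi)(1)$ modulo algebraic functions, leaving the $2$-chain integral as the sole essential contribution. This requires a careful analysis of the boundary $\partial\Gamma(\lam)$ inside the combinatorially fixed NCD of singular fibers and of how the polarization identification \eqref{reg-eq2} interacts with $F^1$; once that is in hand, the rest is a matter of tracking the isomorphisms already set up above.
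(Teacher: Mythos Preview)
The paper's own proof is simply a citation to \cite{a-o}, Proposition 7.2, so there is no detailed argument to compare against. That said, your sketch contains a real error in Step~2.

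You correctly identify that a Carlson representative of $\rho({}^t\chi)(1)$ is well-defined in $\C^2$ only modulo (a) the $F^1$-piece and (b) the Betti lattice, and you correctly note that ambiguity (b) is spanned by the columns of $\Per(W_2\cH(\chi))$, which are \emph{not} algebraic in $\lam$. You then try to argue that this transcendental contribution nonetheless drops out because ``the correction needed to pass from one lift of $\Gamma(\lam)$ to another differs by a boundary, hence by a class whose pairing is an algebraic function''. This is false: two lifts $\Gamma,\Gamma'$ of the same class in $H_1(D^{ss}_\lam)$ differ by a \emph{closed} $2$-cycle in $H_2(\ol U_\lam)$, and integrating $s^{m_i-1}ds\,\omega^\chi$ over such a cycle gives precisely a $\ol\Q$-linear combination of period-matrix entries, which is transcendental. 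So your mechanism for eliminating ambiguity (b) does not work, and the ``Main obstacle'' you flag at the end is not actually overcome.

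The point is not to show that \emph{all} representatives agree modulo $\ol{\Q(\lambda)}$ --- they do not --- but to exhibit one \emph{specific} representative. In the Carlson recipe for the extension $0\to A\to B\to C\to 0$, one lifts the class $1\in C$ both to a Betti element $\Gamma(\lam)\in B_B$ and to a Hodge element $b_F\in F^0B_\dR$, and the representative is $\Gamma(\lam)-b_F$. The de Rham structure on $\cH'$ is an algebraic bundle on $S$ defined over $\ol\Q$, so $b_F$ may be chosen as an algebraic section; then $\langle b_F,\,s^{m_i-1}ds\,\omega^\chi\rangle\in\ol{\Q(\lambda)}$. Taking $\phi_i$ to be this particular representative yields $\phi_i(\lam)=\int_{\Gamma(\lam)}s^{m_i-1}ds\,\omega^\chi-\langle b_F,\,s^{m_i-1}ds\,\omega^\chi\rangle$, which is the lemma with $R_i(\lambda)=-\langle b_F,\,s^{m_i-1}ds\,\omega^\chi\rangle$. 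The algebraic correction comes from the $F^0$-lift, not from any cancellation of Betti periods.

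A smaller point: your Step~3 tries to force a single function $R$ to work for both $i=1,2$, invoking the relation $\partial_\l P_m=(\m-1)P_{m-l}$. That relation concerns the period integrals $P_m$, not the de Rham correction terms, so it does not imply the two corrections coincide. The subsequent use of the lemma (see \eqref{reg-sect5-eq5}--\eqref{reg-sect5-eq6}) only needs each $\phi_i$ separately modulo $\ol{\Q(\lambda)}$, so this refinement is unnecessary and your argument for it should be dropped.
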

\begin{proof}
See \cite{a-o}, Proposition 7.2; the situation there is slightly different but the same discussion works.
\end{proof}
\subsection{Proof of Regulator formula : Part 2}\label{reg-sect4}
Recall from \S \ref{period-sect4} the homology cycles
$\delta,\gamma\in H_1(X_t,\ol\Q)(\chi)$.
By the local invariant cycle theorem, there is an exact sequence
\begin{equation}\label{reg-eq3}
H_1(X_t,\Q)\os{T_0-1}{\lra} H_1(X_t,\Q)\lra  H_1(f^{-1}(0),\Q)\lra 0, 
\end{equation}
where $T_0$ is the local monodromy at $t=0$.
We note that $H_1(f^{-1}(0),\Q)$ has multiplication by $R_0$ induced 
from \eqref{reg-eq3} (recall from \S \ref{period-sect1} that 
$R^1f_*\Q$ has multiplication by $R_0$).
An element $\gamma' \in H_1(X_t,\ol\Q)(\chi)$ vanishes as $t\to0$ 
if and only if it belongs to the
one-dimensional space
\[
\ker[H_1(X_t,\ol\Q)(\chi)\to H_1(f^{-1}(0),\ol\Q)]=
\Image[T_0-1:H_1(X_t,\ol\Q)(\chi)\to H_1(X_t,\ol\Q)(\chi)].
\]

\begin{lem}\label{gamma'}
Put
$$\gamma':=\gamma+\frac{1-e^{2\pi i(\a^\chi+\b^\chi)}}{(1-e^{2\pi i\a^\chi})(1-e^{2\pi i\b^\chi})}\delta.$$
Then $\gamma'$ is a basis of 
$\ker[H_1(X_t,\ol\Q)(\chi)\to H_1(f^{-1}(0),\ol\Q)]$, 
and we have
$$\int_{\gamma'} \omega^\chi=B(1-\a^\chi,1-\b^\chi)\theta \left(t^{1-\a^\chi-\b^\chi}\FF{1-\a^\chi,1-\b^\chi}{2-\a^\chi-\b^\chi}{t}\right).$$
\end{lem}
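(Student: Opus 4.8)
The plan is to obtain the integral formula directly from the Key Lemma and the linear relation \eqref{f123}, and then to recover the homology class $\gamma'$ from that formula by reading off the behaviour of $\int_{\gamma'}\omega^\chi$ at $t=0$.

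First I would establish the integral formula. By Lemma \ref{key-lem1} and linearity,
\[
\int_{\gamma'}\omega^\chi=\int_\gamma\omega^\chi+\frac{1-e^{2\pi i(\a^\chi+\b^\chi)}}{(1-e^{2\pi i\a^\chi})(1-e^{2\pi i\b^\chi})}\int_\delta\omega^\chi=\theta\!\left(B(\a^\chi,\b^\chi)f_1+2\pi i\,\frac{1-e^{2\pi i(\a^\chi+\b^\chi)}}{(1-e^{2\pi i\a^\chi})(1-e^{2\pi i\b^\chi})}\,f_2\right),
\]
and by \eqref{f123} the argument of $\theta$ equals $B(1-\a^\chi,1-\b^\chi)f_3$, so $\int_{\gamma'}\omega^\chi=B(1-\a^\chi,1-\b^\chi)\,\theta f_3$, which in view of \eqref{f_3} is exactly the asserted expression. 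I would also note $\theta f_3\neq0$: if it vanished, then $f_3$ would span a one-dimensional $\pi_1(\P^1\setminus\{0,1,\infty\})$-submodule of the solution space of the second-order hypergeometric equation satisfied by $f_1,f_2,f_3$, contradicting its irreducibility (Proposition \ref{HG-prop1})---this is the same mechanism used at the end of the proof of Theorem \ref{main1}. Since moreover $B(1-\a^\chi,1-\b^\chi)\neq0$, it follows that $\int_{\gamma'}\omega^\chi\not\equiv0$, hence $\gamma'\neq0$.

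Next I would identify $\gamma'$ inside the one-dimensional space $\ker[H_1(X_t,\ol\Q)(\chi)\to H_1(f^{-1}(0),\ol\Q)]=\Image(T_0-1)$. View $\gamma'$ as a flat multivalued section near $t=0$. Because $f_3=t^{1-\a^\chi-\b^\chi}\cdot(\text{holomorphic and nonzero at }0)$ and $\theta$ has rational (single-valued) coefficients, the monodromy of $\int_{\gamma'}\omega^\chi$ around $t=0$ is multiplication by $e^{2\pi i(1-\a^\chi-\b^\chi)}$; on the other hand it equals $\int_{T_0^{\pm1}\gamma'}\omega^\chi$. Hence the flat section $\eta:=T_0^{\pm1}\gamma'-e^{2\pi i(1-\a^\chi-\b^\chi)}\gamma'$ satisfies $\int_\eta\omega^\chi\equiv0$. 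Differentiating and using the Gauss--Manin connection gives $\int_\eta\nabla_{\partial_t}\omega^\chi\equiv0$ as well; since $\omega^\chi$ spans $F^1H^1_\dR(X_t)(\chi)$ while $\nabla_{\partial_t}\omega^\chi\notin F^1$ (otherwise $F^1$ would be a flat sub-line-bundle, contradicting irreducibility), these two classes span $H^1_\dR(X_t)(\chi)$, so perfectness of the period pairing on the $\chi$-part forces $\eta=0$. Thus $\gamma'$ is a nonzero eigenvector of $T_0$; its eigenvalue is $e^{\pm2\pi i(1-\a^\chi-\b^\chi)}=e^{\mp2\pi i\b_0^\chi}$ (using $\a_0^\chi=0$ and $\a_0^\chi+\b_0^\chi+\a^\chi+\b^\chi\in\Z$), and comparing with the spectrum $\{1,e^{2\pi i\b_0^\chi}\}$ of $T_0$ and recalling $\dim_{\ol\Q}\Coker(T_0-1)=1$ (\S\ref{period-sect3}), one concludes that $\gamma'$ spans $\Image(T_0-1)$, as required.

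The computation in the second paragraph is routine. The hard part will be the last step: showing that agreement of the single period function $\int_{(-)}\omega^\chi$ already determines the flat section $\gamma'$ (via the Gauss--Manin/perfect-pairing argument), together with the bookkeeping that identifies the exponent $1-\a^\chi-\b^\chi$ of $f_3$ at $t=0$ with the non-unit local monodromy eigenvalue $e^{2\pi i\b_0^\chi}$ of $T_0$.
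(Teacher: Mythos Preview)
Your argument is correct and follows essentially the same route as the paper: derive the period formula from Lemma~\ref{key-lem1} together with \eqref{f123}, then place $\gamma'$ in $\Image(T_0-1)$ via the monodromy of $\theta f_3$ at $t=0$. The paper is terser on the second step---it simply observes that $f_1,f_3$ are the $T_0$-eigenbasis of local solutions at $t=0$ and that the period map $H_1(X_t,\ol\Q)(\chi)\to(\text{solutions})$ respects the monodromy, whence $\gamma\leftrightarrow\theta f_1$, $\gamma'\leftrightarrow\theta f_3$ already gives the eigenspace decomposition; your Gauss--Manin/perfect-pairing argument is precisely the verification that this period map is injective, which the paper takes for granted. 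The only other difference is that the paper separates out the degenerate case $\a^\chi+\b^\chi=1$ explicitly (where $\gamma'=\gamma$, $T_0$ is unipotent, and $\Image(T_0-1)=\ker(T_0-1)$), whereas your framework absorbs it implicitly.
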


\begin{proof}
If $\a^\chi+\b^\chi=1$, then $T_0$ is unipotent and $\Image(T_0-1)=\Ker(T_0-1)$, to which 
$\gamma'=\gamma$ belongs. The assertion about the period is Lemma \ref{key-lem1}. 
Suppose $\a^\chi+\b^\chi\ne 1$. 
Recall the notations \eqref{f_1 f_2}, \eqref{f_3} and the relation \eqref{f123}. 
The assertion about the period follows from \eqref{f123}. 
Since $f_1(t)$ and $f_3(t)$ form a basis of the local solutions near $t=0$, 
and $T_0-1$ annihilates $f_1(t)$ but not $f_3(t)$, the $\gamma'$ generates $\Image(T_0-1)$. 
\end{proof}

We regard $\gamma'$ as a homology cycle in a general fiber of $f_\lam$.
Fix $l$th roots $\sqrt[l]{\lam}$ and $\sqrt[l]{\lam-1}$.
Let $\Gamma_\lam\subset \ol{U}_\lam$ be the Lefschetz thimble over a path 
from $s=\sqrt[l]{\lam-1}$ to $s=\sqrt[l]{\lam}$ with fiber $\gamma'$
($s$ is the coordinate of $\P^1_\lam$).
Note that $\delta$ vanishes as $t\to1$ but $\gamma$ does not. 
Therefore $\Gamma_\lam$ has a nontrivial boundary supported on 
$f_\lam^{-1}(\sqrt[l]{\lam-1})\cong f^{-1}(1)$:
\[
\Gamma_\lam\in H^B_2(\ol{U}_\lam,f_\lam^{-1}(\sqrt[l]{\lam-1});\ol\Q),
\quad \partial\Gamma_\lam\ne0
\in H^B_1(f_\lam^{-1}(\sqrt[l]{\lam-1}))\cong H^B_1(f^{-1}(1)).
\]
Note that $H_1^B(f^{-1}(1),\Q)$ has multiplication by $R_0$ via the local invariant cycle theorem (cf. \eqref{reg-eq3}).
The $\chi|_{R_0}$-part $H^B_1(f^{-1}(1),\ol\Q)(\chi|_{R_0})$
is one-dimensional, spanned by $\partial\Gamma_\lam$.
Hence the $\chi$-part
\[
H_1^B((\pi_\lam f_\lam)^{-1}(1),\ol\Q)(\chi)\cong
H^B_1(f^{-1}(1),\ol\Q)(\chi|_{R_0})\ot H_0^B(\pi_\lam^{-1}(1))(\chi|_{\mu_l})
\]
is one-dimensional, spanned by the sum 
$\sum_{\sigma\in\mu_l} \chi(\sigma)^{-1}\cdot\sigma(\partial\Gamma_\lam)$.
Therefore, in Lemma \ref{reg-lem1} we may take $\Gamma(\lam)$ to be 
the sum $\sum_{\sigma\in\mu_l} \chi(\sigma)^{-1}\cdot\sigma\Gamma_\lam$.
Then we have
$$\int_{\Gamma(\lam)}s^{m-1}ds~\omega^\chi|_{\l=\lam}
=\sum_{\sigma\in\mu_l} \chi(\sigma)^{-1}
\int_{\sigma\Gamma_\lam}s^{m-1}ds~\omega^\chi|_{\l=\lam}
=l\int_{\Gamma_\lam} s^{m-1}ds~\omega^\chi|_{\l=\lam}.$$

\begin{lem}\label{lemma H}
Let $H_\mu(\l)$ be as defined in Theorem \ref{main2}. 
Then we have
\begin{align}\label{H}
H_\m(\l)
=B(1-\a^\chi,1-\b^\chi)
\int_0^1(\l-t)^{\m-1}t^{1-\a^\chi-\b^\chi}\FF{1-\a^\chi,1-\b^\chi}{2-\a^\chi-\b^\chi}{t}\, dt, 
\end{align}
and 
\begin{equation}\label{derive H}
\pl H_\m(\l)=(\m-1) H_{\m-1}(\l).
\end{equation}
\end{lem}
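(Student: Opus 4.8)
Throughout write $\a=\a^\chi$, $\b=\b^\chi$ and recall $\a,\b\in[0,1)$, so $1-\a-\b>-1$ and $2-\a,2-\b\notin\Z_{\le0}$. The series defining $H_\m$ converges for $|1-\l|>1$, and for such $\l$ one has $|1-t|<|\l-1|$ for every $t\in[0,1]$. For $\l\in\C\setminus[0,1]$ the integral on the right of \eqref{H} also converges --- the only singularities of the integrand in $t$ are the integrable $t^{1-\a-\b}$ at $t=0$ and the logarithmic one of $\FF{1-\a,1-\b}{2-\a-\b}{t}$ at $t=1$, while $(\l-t)^{\m-1}$ stays bounded --- and both sides are holomorphic in $\l$ there. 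So the plan is to prove \eqref{H} on $\{|1-\l|>1\}$ and then extend by analytic continuation, fixing the branch of $(\l-t)^{\m-1}$ to be $(\l-1)^{\m-1}\bigl(1+\tfrac{1-t}{\l-1}\bigr)^{\m-1}$ with the principal power on the second factor.

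To prove \eqref{H} I expand
\[
(\l-t)^{\m-1}=(\l-1)^{\m-1}\sum_{k\ge0}\binom{\m-1}{k}\Bigl(\frac{1-t}{\l-1}\Bigr)^k,
\]
which converges uniformly in $t\in[0,1]$ when $|\l-1|>1$, and integrate term by term. This reduces matters to evaluating $\int_0^1(1-t)^k\,t^{1-\a-\b}\FF{1-\a,1-\b}{2-\a-\b}{t}\,dt$ for each $k\ge0$. Here I apply the integral representation \eqref{int rep 3F2} with $a=1-\a$, $b=1-\b$, $d=2-\a-\b$, $x=1$, $c=2-\a-\b$, $e=k+3-\a-\b$; since $d=c$ the parameter $2-\a-\b$ cancels in the resulting ${}_3F_2$, leaving $B(2-\a-\b,\,k+1)\,\FF{1-\a,1-\b}{k+3-\a-\b}{1}$, and the ${}_2F_1$ is summed by Gauss's theorem (applicable as $k+1>0$). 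Collecting the $\Gamma$-factors, multiplying by $B(1-\a,1-\b)$, and using $\Gamma(1-\a)/\Gamma(k+2-\a)=1/(1-\a)_{k+1}$ and the analogue in $\b$, the $k$-th integral becomes $(k!)^2\big/\bigl((1-\a)_{k+1}(1-\b)_{k+1}\bigr)$. Since $\binom{\m-1}{k}(k!)^2=(-1)^k(1-\m)_k\,k!$ and $(-1)^k(\l-1)^{-k}=(1-\l)^{-k}$, the right-hand side of \eqref{H} equals $(\l-1)^{\m-1}\sum_{k\ge0}\dfrac{k!\,(1-\m)_k}{(1-\a)_{k+1}(1-\b)_{k+1}}(1-\l)^{-k}$. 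On the other side, expanding the ${}_3F_2$ defining $H_\m$ and using $(1)_n=n!$ together with $(1-\a)(2-\a)_n=(1-\a)_{n+1}$ (and likewise in $\b$) produces the very same series, which proves \eqref{H}.

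Equation \eqref{derive H} then follows by differentiating \eqref{H} under the integral sign: for $\l$ in a compact subset of $\C\setminus[0,1]$ the factor $|(\l-t)^{\m-2}|$ is bounded (as $\m-2$ is a fixed real number and $|\l-t|$ is bounded away from $0$ and $\infty$) and the remaining factor is a fixed integrable function of $t$, so differentiation under the integral is legitimate; hence $\pl H_\m(\l)=(\m-1)\,B(1-\a,1-\b)\int_0^1(\l-t)^{\m-2}\,t^{1-\a-\b}\FF{1-\a,1-\b}{2-\a-\b}{t}\,dt$, which is $(\m-1)H_{\m-1}(\l)$ by \eqref{H} applied with $\m$ replaced by $\m-1$. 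The one step that genuinely needs care is the coefficient computation in \eqref{H}: one must spot the cancellation $d=c$ so that \eqref{int rep 3F2} followed by Gauss's theorem yields a closed form, and then carry out the Pochhammer bookkeeping precisely enough to land exactly on the power series of the ${}_3F_2$ in the definition of $H_\m$. The rest is routine.
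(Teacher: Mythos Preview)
Your argument is correct and takes a genuinely different route from the paper's.

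For \eqref{H}, the paper substitutes the Euler integral for $B(1-\a,1-\b)\FF{1-\a,1-\b}{2-\a-\b}{t}$ into the right-hand side, obtaining a double integral in $(t,s)$; a change of variables $u=1-t$, $v=1-st$, followed by $w=u/v$, transforms this into $(\l-1)^{\m-1}\int_0^1\int_0^1(1-\tfrac{wv}{1-\l})^{\m-1}(1-w)^{-\a}(1-v)^{-\b}\,dw\,dv$, which is then recognized via \eqref{int rep 2F1} and \eqref{int rep 3F2} as the ${}_3F_2$ defining $H_\m$. You instead binomially expand $(\l-t)^{\m-1}$ and evaluate each coefficient by \eqref{int rep 3F2} at $x=1$ followed by Gauss's summation, then match power series. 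Both are short; the paper's approach is a single change-of-variables trick that avoids any term-by-term bookkeeping, while yours is more mechanical but requires spotting the cancellation $c=d$ so that Gauss applies.

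For \eqref{derive H} your route is cleaner: once \eqref{H} is in hand, differentiation under the integral is immediate. The paper instead differentiates the ${}_3F_2$ series directly (via the analogue of \eqref{partial 2F1}) and is left to verify a Pochhammer identity by hand, exactly parallel to its proof of \eqref{partial F}. Your use of the integral representation here is a real simplification.
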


\begin{proof}
Recall the integral representation of ${}_2F_1$ (cf \cite{slater}, (4.1.2))
\begin{equation}\label{int rep 2F1}
B(b,c-b)\FF{a,b}{c}{x}=\int_0^1(1-xt)^{-a}t^{b-1}(1-t)^{c-b-1}\, dt.
\end{equation}
Applying this, we have, writing $\a=\a^\chi$, $\b=\b^\chi$, 
$$B(1-\a,1-\b)\FF{1-\a,1-\b}{2-\a-\b}{t}=\int_0^1(1-ts)^{\a-1}s^{-\b}(1-s)^{-\a}\,ds. $$
Letting $u=1-t$, $v=1-st$, we have
\begin{align*}
&\int_0^1\int_0^1(\l-t)^{\m-1}t^{1-\a-\b}(1-ts)^{\a-1}s^{-\b}(1-s)^{-\a}\,ds\, dt
\\&=(\l-1)^{\m-1} \int_0^1 \int_0^v \left(1-\frac{u}{1-\l}\right)^{\m-1}v^{\a-1}(v-u)^{-\a}(1-v)^{-\b}\, du\, dv
\\&=(\l-1)^{\m-1}\int_0^1\int_0^1 \left(1-\frac{wv}{1-\l}\right)^{\m-1}(1-w)^{-\a}(1-v)^{-\b}\, dw\, dv.
\end{align*}
Then, using \eqref{int rep 2F1} and \eqref{int rep 3F2}, we obtain \eqref{H}. 
Since 
$$\pl \FFF{1,1,1-\m}{2-\a,2-\b}{\frac{1}{1-\l}}=\frac{1-\m}{(2-\a)(2-\b)} \FFF{2,2,2-\m}{3-\a,3-\b}{\frac{1}{1-\l}}$$
similarly as \eqref{partial 2F1}, the proof of \eqref{derive H} amounts to show 
\begin{align*} 
& \frac{(1)_n(1)_n(1-\m)_n}{(2-\a)_n(2-\b)_n(1)_n}+ \frac{1}{(2-\a)(2-\b)}
\cdot \frac{(2)_{n-1}(2)_{n-1}(2-\m)_{n-1}}{(3-\a)_{n-1}(3-\b)_{n-1}(1)_{n-1}}
\\&=\frac{(1)_n(1)_n(2-\m)_n}{(2-\a)_n(2-\b)_n(1)_n} \quad (n>0), 
\end{align*}
and this is elementary. 
\end{proof}

\begin{prop}
For a positive integer $m \equiv k \pmod l$, put
$$Q_m(\lam):=\int_{\Gamma_\lam}s^{m-1}ds~\omega^\chi|_{\l=\lam}, \quad \m=\frac{m}{l}$$
regarded as an analytic function for $\lam$.
Then we have
\begin{equation}\label{Q_m}
Q_m(\l)= \frac{1}{l}\sum_{i=0}^N (a_i(\l)+b_i(\l)\pl) H_{\m+i}(\l), 
\end{equation}
where $a_i(\l)$, $b_i(\l)$ are as defined in \eqref{p to ab}. 
Moreover we have, if $\m>1$, 
\begin{equation}\label{partial Q_m}
\pl Q_m(\l)=(\m-1)Q_{m-l}(\l). 
\end{equation}
\end{prop}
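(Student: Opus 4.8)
The plan is to imitate the proof of Proposition \ref{prop P_m} essentially line by line, with two replacements: the fibre period $\int_\delta\omega^\chi$ used there (Lemma \ref{key-lem1}) is replaced by the fibre period $\int_{\gamma'}\omega^\chi$ of Lemma \ref{gamma'}, and the integral formula \eqref{int rep 3F2} is replaced by \eqref{H} of Lemma \ref{lemma H}.

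First I would pass to the base variable $t$ of the fibration via the substitution $t=\l-s^l$. On the Lefschetz thimble $\Gamma_\lam$ the coordinate $s$ runs from $\sqrt[l]{\l-1}$ to $\sqrt[l]{\l}$, so $t$ runs from $1$ to $0$, and $s^{m-1}\,ds=-\tfrac{1}{l}(\l-t)^{\m-1}\,dt$. Writing $\theta=p_0(t)+p_1(t)\tfrac{d}{dt}$ for the operator of Lemma \ref{key-lem1} and $f_3(t)$ for the solution \eqref{f_3}, Lemma \ref{gamma'} gives $\int_{\gamma'}\omega^\chi=B(1-\a^\chi,1-\b^\chi)\,\theta f_3(t)$, so that
\[
Q_m(\l)=\frac{B(1-\a^\chi,1-\b^\chi)}{l}\int_0^1(\l-t)^{\m-1}\,\theta f_3(t)\,dt .
\]

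Next I would integrate by parts to move $d/dt$ off $f_3(t)$ and onto $(\l-t)^{\m-1}$. The boundary terms at $t=0$ and at $t=1$ both vanish, and this is exactly where the full strength of \textbf{P1} ($t(1-t)\mid p_1(t)$) is needed: in Proposition \ref{prop P_m} only $1-t\mid p_1(t)$ was required, because there the $t$-integral started at $t=1$. (Here one should note that $f_3$ behaves like $t^{1-\a^\chi-\b^\chi}$ near $t=0$ with $1-\a^\chi-\b^\chi\in(-1,1]$, and has only a logarithmic singularity at $t=1$, so the relevant integrals converge and the boundary contributions are genuinely annihilated by the vanishing of $p_1$ at $t=0,1$.) After the integration by parts I would expand $p_0,p_1$ by \eqref{ab to p} and collect powers of $(\l-t)$ exactly as in Proposition \ref{prop P_m}, obtaining
\[
Q_m(\l)=\frac{1}{l}\sum_{i\geq -1}\bigl(a_i(\l)+(\m+i)b_{i+1}(\l)\bigr)\,B(1-\a^\chi,1-\b^\chi)\int_0^1(\l-t)^{\m+i-1}f_3(t)\,dt
\]
with the convention $a_{-1}:=0$. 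By \eqref{H} each integral equals $H_{\m+i}(\l)$, and then, just as \eqref{P_m} was deduced from the analogous intermediate formula, reindexing the second sum by means of \eqref{derive H} (which turns $(\m+i)b_{i+1}(\l)H_{\m+i}(\l)$ into $b_{i+1}(\l)\pl H_{\m+i+1}(\l)$) yields \eqref{Q_m}.

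Finally, \eqref{partial Q_m} follows from \eqref{Q_m} by the same computation as in Proposition \ref{prop P_m}: apply $\pl$, use $\pl a_i(\l)=-(i+1)a_{i+1}(\l)$ and $\pl b_i(\l)=-(i+1)b_{i+1}(\l)$ coming from \eqref{p to ab}, together with $\pl H_{\m+i}(\l)=(\m+i-1)H_{\m+i-1}(\l)$ from \eqref{derive H}, and observe that the cross terms telescope to $(\m-1)Q_{m-l}(\l)$ (which requires $\m>1$). I expect no conceptual obstacle here, the argument being a transcription of Proposition \ref{prop P_m}; the only genuinely delicate points are the bookkeeping of orientations in the thimble substitution and the verification that the integration-by-parts boundary terms vanish identically — not merely modulo an algebraic function of $\l$ — which is precisely what forces us to retain the hypothesis \textbf{P1} rather than a weaker divisibility condition.
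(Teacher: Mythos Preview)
Your proposal is correct and follows exactly the approach the paper takes: the paper's own proof is simply the line ``by Lemma \ref{lemma H}, the same argument as in the proof of Proposition \ref{prop P_m} works,'' after rewriting $Q_m(\l)$ as $\tfrac{1}{l}B(1-\a^\chi,1-\b^\chi)\int_0^1(\l-t)^{\m-1}\theta f_3(t)\,dt$ via Lemma \ref{gamma'}. Your observation that the boundary term at $t=0$ is what forces the full divisibility $t(1-t)\mid p_1(t)$ in \textbf{P1} (rather than just $(1-t)\mid p_1(t)$ as used in Proposition \ref{prop P_m}) is a detail the paper leaves implicit.
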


\begin{proof}
Since $\frac{-dt}{\l-t}=l\frac{ds}{s}$, we have by Lemma \ref{gamma'}
$$Q_m(\l)=\frac{1}{l}B(1-\a^\chi,1-\b^\chi)
\int_0^1(\l-t)^{\m-1}\theta\left(t^{1-\a^\chi-\b^\chi}\FF{1-\a^\chi,1-\b^\chi}{2-\a^\chi-\b^\chi}{t}\right)dt. $$
By Lemma \ref{lemma H}, the same argument as in the proof of Proposition \ref{prop P_m} works to prove the proposition. 
\end{proof}

\begin{lem}\label{DH}
Let the differential operator $\cD$ be as defined in \eqref{DE}. Then we have
$$\cD H_\m(\l) =-(\l-1)^{\m-1}.$$
\end{lem}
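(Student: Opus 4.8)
The plan is to exploit the integral representation \eqref{H} of Lemma~\ref{lemma H}. Write $\a=\a^\chi$, $\b=\b^\chi$, $B_0:=B(1-\a,1-\b)$ and $g(t):=f_3(t)=t^{1-\a-\b}\FF{1-\a,1-\b}{2-\a-\b}{t}$, so that $H_\m(\l)=B_0\int_0^1(\l-t)^{\m-1}g(t)\,dt$ and, differentiating under the integral sign, $\pl^j H_\m(\l)=B_0\,(\m-1)(\m-2)\cdots(\m-j)\int_0^1(\l-t)^{\m-1-j}g(t)\,dt$ for $j=1,2$ (valid for $\l$ outside a neighbourhood of $[0,1]$, which suffices by analytic continuation). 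First I would substitute these into the operator $\cD$ of \eqref{DE}; expanding each polynomial coefficient of $\cD$ as a polynomial in $(\l-t)$ with coefficients in $\ol\Q[t]$ via $\l=(\l-t)+t$, this writes $B_0^{-1}\cD H_\m(\l)$ as a finite $\ol\Q[t]$-linear combination of the integrals $\int_0^1(\l-t)^{\m-1-j}g(t)\,dt$, $j=0,1,2$.

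Next, using $\partial_t(\l-t)^{k}=-k(\l-t)^{k-1}$, I would integrate by parts (once for the terms with $j=1$, twice for those with $j=2$) so as to bring every integral to the single form $\int_0^1(\l-t)^{\m-1}(\,\cdot\,)\,dt$, transferring all $t$-derivatives onto the products $(\text{polynomial in }t)\cdot g(t)$. A routine bookkeeping then shows that the resulting integrand is $(\l-t)^{\m-1}$ times
\[
t(1-t)\,g''(t)+\bigl((\a+\b)-(1+\a+\b)t\bigr)\,g'(t)-\a\b\,g(t),
\]
which vanishes identically: indeed $g=f_3$ is a solution of the hypergeometric differential equation $\bigl(t(D+\a)(D+\b)-(D+\a+\b-1)D\bigr)f=0$, $D=t\frac{d}{dt}$ (see the discussion around \eqref{f_3}), and a direct expansion of $D$ shows this equation is equivalent, for $t\ne 0$, to the displayed one. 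Hence $B_0^{-1}\cD H_\m(\l)$ reduces to the boundary terms produced by the integrations by parts at $t=0$ and $t=1$.

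The remaining step — and the only delicate one — is to evaluate those boundary terms, given that $g=f_3$ is singular at both endpoints: near $t=0$ it behaves like $t^{1-\a-\b}$ with $\a+\b\in[0,2)$, and near $t=1$ it has a logarithmic singularity because the parameters $(1-\a,1-\b;2-\a-\b)$ of its ${}_2F_1$ satisfy $(2-\a-\b)-(1-\a)-(1-\b)=0$. I would perform a local expansion of $g$ and $g'$ at each endpoint (at $t=1$ using the standard logarithmic connection formula for ${}_2F_1(a,b;a+b;\,\cdot\,)$ with $(a,b)=(1-\a,1-\b)$, whose logarithmic term has coefficient $-B_0^{-1}$ because $\Gamma(2-\a-\b)/\bigl(\Gamma(1-\a)\Gamma(1-\b)\bigr)=B_0^{-1}$), and then check that the potentially divergent contributions to the boundary expression cancel among themselves; the point is that $t(1-t)$, the coefficient of $g''$ that supplies the most singular boundary terms, vanishes at both $t=0$ and $t=1$. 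After this cancellation the boundary at $t=0$ contributes $0$ and the boundary at $t=1$ contributes $-B_0^{-1}(\l-1)^{\m-1}$, so that $\cD H_\m(\l)=B_0\cdot\bigl(-B_0^{-1}(\l-1)^{\m-1}\bigr)=-(\l-1)^{\m-1}$.

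As a consistency check, note that the claimed identity is compatible with \eqref{derive H}: applying $\pl$ to $\cD H_\m$, using $\pl H_\m=(\m-1)H_{\m-1}$ together with the fact that differentiating in $\l$ the operator \eqref{DE} produces the same operator with $\m$ replaced by $\m-1$, one gets $\pl\,\cD H_\m=(\m-1)\,\cD H_{\m-1}$, which matches $\pl\bigl(-(\l-1)^{\m-1}\bigr)=-(\m-1)(\l-1)^{\m-2}$. I expect the main obstacle to be precisely the endpoint analysis in the third paragraph — forcing the divergent pieces to cancel and isolating the finite remainder $-B_0^{-1}(\l-1)^{\m-1}$; everything else is routine manipulation of integrals and hypergeometric series.
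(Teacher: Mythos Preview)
Your approach via the integral representation \eqref{H} is correct, and the endpoint analysis (which you rightly flag as the delicate step) does go through: after expanding the coefficients of $\cD$ in $(\l-t)$ and integrating by parts, the bulk integrand is indeed $(\l-t)^{\m-1}$ times the hypergeometric operator in $t$ applied to $g=f_3$, hence zero; at $t=0$ the surviving boundary combination vanishes like $t^{2-\a-\b}\to 0$ (the leading $t^{1-\a-\b}$ pieces in $(c_1-c_2')g$ and $c_2 g'$ cancel exactly), and at $t=1$ the only nonvanishing contribution is $-K(1)\cdot\lim_{t\to 1}t(1-t)g'(t)=-(\l-1)^{\m-1}B_0^{-1}$, since $g(t)\sim -B_0^{-1}\log(1-t)$ gives $t(1-t)g'(t)\to B_0^{-1}$.

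The paper, however, takes a completely different and purely algebraic route. It works directly with the series $F(x)={}_3F_2\bigl({1,1,1-\m\atop 2-\a^\chi,2-\b^\chi};x\bigr)$ in the variable $x=1/(1-\l)$. One checks termwise that the second-order operator $x(D+1)(D+1-\m)-(D+1-\a^\chi)(D+1-\b^\chi)$, $D=x\,d/dx$, annihilates every $x^n$ with $n\ge 1$ and leaves only the constant $-(1-\a^\chi)(1-\b^\chi)$; a change of variable $x\mapsto\l$ then identifies this operator with $(\l-1)^{1-\m}\cD$ acting on $H_\m$. This sidesteps all analytic issues --- no integrals, no endpoint expansions --- at the price of a short piece of operator algebra. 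Your Euler-integral method is more conceptual (it makes visible the mechanism linking ${}_2F_1$ to ${}_3F_2$ via convolution), and your consistency check $\pl\,\cD_\m=\cD_{\m-1}\,\pl$ is a pleasant bonus; the paper's argument is simply shorter and avoids exactly the computation you were worried about.
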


\begin{proof}
Put $x=\frac{1}{1-\l}$, $F(x)=\FFF{1,1,1-\m}{2-\a^\chi,2-\b^\chi}{x}$ and $D=x\frac{d}{dx}$. Using $Dx^n=nx^n$, one easily verifies 
$$(x(D+1)(D+1-\m)-(D+1-\a^\chi)(D+1-\b^\chi))F=-(1-\a^\chi)(1-\b^\chi).$$
So $H_\m$ satisfies $\cD_1H_\m=-1$ with
\begin{align*}
\cD_1&=(x(D+1)(D+1-\m)-(D+1-\a^\chi)(D+1-\b^\chi))(-x)^{\m-1}
\\& =(-x)^{\m-1}(x(D+\m)D-(D-\a^\chi+\m)(D-\b^\chi+\m)). 
\end{align*}
Since $D=(1-\l)\partial_\l$, where $\partial_\l=\frac{d}{d\l}$, one obtains 
$\cD_1=(\l-1)^{1-\m}\cD$. Hence the lemma follows. 
\end{proof}

Now we finish the proof of Theorem \ref{main2}. 
Let $\mu=m/l$ be as in Theorem \ref{main1}.
By Lemma \ref{reg-lem1}, \eqref{Q_m} and \eqref{partial Q_m}, we have
\begin{align}
\phi_1(\lambda)&\equiv Q_m(\l)= \frac{1}{l}\sum_{i=0}^N (a_i(\l)+b_i(\l)\pl) H_{\m+i}(\l), 
\mod \ol{\Q(\lambda)},\label{reg-sect5-eq5}\\
\phi_2(\lambda)&\equiv Q_{m-l}(\l)=(\mu-1)^{-1}\partial_\l Q_m(\l),
\mod \ol{\Q(\lambda)}.\label{reg-sect5-eq6}
\end{align} 
By \eqref{derive H}, we have similarly as in the proof of Proposition \ref{Theta}, that 
$Q_m(\l)= \Theta_1 H_{\m+N}(\l)$ where $\Theta_1$
is the same differential operator as in the proof of Proposition \ref{Theta}.
By Lemma \ref{DH}, 
we have 
$$H_{\m+N}(\l)\equiv \Theta_2H_\m(\l) \mod{\ol{\Q(\l)}}$$
where $\Theta_2$ is the same differential operator in the proof of Proposition \ref{Theta}.
Hence $\phi_1(\l)\equiv \Theta_1\Theta_2 H_\m(\l)=\Theta H_\m(\l)$ as desired.
\qed

\subsection{Question of Golyshev}
We give an affirmative answer to the question of Golyshev in a special case.
\begin{lem}\label{reg-comp-lem1}
Let 
\[
P_{\mathrm{HG}}:=D_\lambda(D_\lambda-\mu+\alpha^\chi+\beta^\chi-1)
-\lambda(D_\lambda+\alpha^\chi-\mu)(D_\lambda+\beta^\chi-\mu)
\]
be the hypergeometric differential operator. Put 
\[
Q_{\mathrm{HG}}:=\theta_\lambda P_{\mathrm{HG}},\quad
\theta_\lambda:=(1-\lambda)D_\lambda+(\mu-1)\lambda, 
\]
and local systems of $\C$-modules on $S:=\P^1\setminus\{0,1,\infty\}$
\[
V_P:=\mathit{Sol}(D_S/D_SP_{\mathrm{HG}}),\quad
V_Q:=\mathit{Sol}(D_S/D_SQ_{\mathrm{HG}}),
\]
where 
$D_S$ denotes the ring of differential operators on $S$. 
Let 
\[
0\lra V_P\lra V_Q\lra V_Q/V_P\lra0
\]
be the exact sequence obtained by applying the solution functor 
$\mathit{Sol}(\bullet):=\mathcal{H}om_{D_S}
(\bullet,\O_S)$ on
\[
0\lra D_S/D_S\theta_\lambda
\lra D_S/D_SQ_{\mathrm{HG}}\lra D_S/D_SP_{\mathrm{HG}}\lra0.
\]
Then, for any generically finite dominant map
$h:T\to S$, the exact sequence
\begin{equation}\label{reg-comp-lem1-eq1}
0\lra h^*V_P\lra h^*V_Q\lra h^*(V_Q/V_P)\lra0
\end{equation}
of $\C[\pi_1(T)]$-modules does not split.
\end{lem}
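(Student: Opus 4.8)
The plan is to detect the non-splitting locally, at the puncture $\lambda=1$, and to check that this local obstruction survives any base change $h$. (Replacing $T$ by the normalization of an irreducible component dominating $S$, which is harmless since a splitting pulls back, we may assume $T$ is a smooth curve.)

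First I would set up the picture near $\lambda=1$. Since $D_\lambda=\lambda\, d/d\lambda$ and $\lambda$ is invertible on $S$, a direct computation gives $P_{\mathrm{HG}}=\lambda\cdot\cD$ with $\cD$ the operator of \eqref{DE}, so $V_P=\mathit{Sol}(\cD)$; its indicial exponents at $\lambda=1$ are $0$ and $\mu$, and since $\mu\notin\Z$ (as $\mu\equiv q^\chi\not\equiv0$) the local monodromy $T_1$ at $\lambda=1$ acts on $V_P$ semisimply, with eigenvalues $1,e^{2\pi i\mu}$ and a Frobenius basis $\{f_0,F_\mu\}$ ($f_0$ of exponent $0$, and $F_\mu$ the function of Theorem \ref{main1}, which is the exponent-$\mu$ solution of $\cD$). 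On the other hand $\mathit{Sol}(\theta_\lambda)=\C\cdot(\lambda-1)^{\mu-1}$, so $V_Q/V_P$ has $T_1$-monodromy $e^{2\pi i\mu}$. Choosing a local lift $g\in V_Q$ of $(\lambda-1)^{\mu-1}$ and, using $e^{2\pi i\mu}\neq 1$, normalizing it so that $T_1 g-e^{2\pi i\mu}g\in\C\cdot F_\mu$, we may write $T_1 g=e^{2\pi i\mu}g+a\,F_\mu$ for a scalar $a$. Everything reduces to proving $a\neq0$: this says exactly that on $W:=\C F_\mu\oplus\C g$ one has $T_1|_W=e^{2\pi i\mu}(\id_W+N)$ with $N\neq0$ nilpotent, i.e. $T_1|_{V_Q}$ carries a genuine $2\times 2$ Jordan block for the eigenvalue $e^{2\pi i\mu}$.

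The crux — and the step I expect to be the main obstacle — is to compute $a$ from the differential equation and see it is nonzero. Under the quotient map $V_Q\to V_Q/V_P$, which is $g\mapsto P_{\mathrm{HG}}g$, a lift $g$ of $(\lambda-1)^{\mu-1}$ satisfies $P_{\mathrm{HG}}g=\lambda\cD g=c\,(\lambda-1)^{\mu-1}$ for some $c\neq0$, hence $\cD g=c\,\lambda^{-1}(\lambda-1)^{\mu-1}$, whose leading term at $\lambda=1$ is a nonzero multiple of $(\lambda-1)^{\mu-1}$. Now $\mu-1$ is a \emph{resonant} inhomogeneity for $\cD$ at $\lambda=1$: since $\cD$ sends $(\lambda-1)^{\rho}$ to $\rho(\mu-\rho)\cdot(\text{unit})\cdot(\lambda-1)^{\rho-1}+\cdots$, matching a right-hand side whose lowest-order term is $(\lambda-1)^{\mu-1}$ would force $\rho=\mu$, for which the coefficient $\mu(\mu-\mu)$ vanishes; consequently there is no log-free particular solution, and every solution $g$ must contain a term $A\,F_\mu(\lambda)\log(\lambda-1)$. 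The value of $A$ is pinned down by indicial bookkeeping: one computes $\cD\!\left(F_\mu\log(\lambda-1)\right)$, whose $\log$-part is $(\cD F_\mu)\log(\lambda-1)=0$ and whose non-$\log$ part has leading term a nonzero multiple of $\mu(\lambda-1)^{\mu-1}$, and one matches $(\lambda-1)^{\mu-1}$-coefficients; this exhibits $A$ as a nonzero multiple of $c/\mu$. The remaining, log-free, part of $g$ is a combination of $f_0$, $F_\mu$ and $(\lambda-1)^{\mu+1}\cdot(\text{holomorphic})$, each $T_1$-semisimple with eigenvalue $1$ or $e^{2\pi i\mu}$, hence contributing nothing to the $F_\mu$-coefficient of $T_1 g-e^{2\pi i\mu}g$; since $T_1\!\left(F_\mu\log(\lambda-1)\right)=e^{2\pi i\mu}F_\mu\!\left(\log(\lambda-1)+2\pi i\right)$, we conclude $a=2\pi i\,e^{2\pi i\mu}A\neq0$.

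Finally I would pass to an arbitrary generically finite dominant $h\colon T\to S$. Extending $h$ to $\bar h\colon\bar T\to\P^1$ and choosing a point $P\in\bar T$ over $\lambda=1$ with ramification index $k\geq 1$, on a small punctured disk $\Delta$ around $P$ the monodromy of $h^{*}V_Q$ is $T_1^{\,k}$, and $T_1^{\,k}|_W=e^{2\pi i k\mu}(\id_W+kN)$ is again non-semisimple because $kN\neq 0$. If \eqref{reg-comp-lem1-eq1} split over $\pi_1(T)$, restricting the splitting to $\pi_1(\Delta)$ would produce a $T_1^{\,k}$-eigenvector $v=\alpha F_\mu+\beta f_0+\gamma g$ with $\gamma\neq 0$ (its image in $h^{*}(V_Q/V_P)$ being nonzero); comparing $F_\mu$-coefficients in $T_1^{\,k}v=e^{2\pi i k\mu}v$ forces $\gamma\,k\,a=0$, contradicting $\gamma\neq0$, $k\geq1$, $a\neq0$. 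Hence \eqref{reg-comp-lem1-eq1} does not split, for any $h$. (Running the same argument at $\lambda=0$ would also work, using $\mu\not\equiv\alpha^\chi+\beta^\chi\pmod\Z$ to separate the relevant eigenvalues there.)
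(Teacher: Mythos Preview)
Your argument is correct and takes a genuinely different route from the paper's. The paper first reduces to $T=S$ by invoking the injectivity of $\Ext^1_{\pi_1(S)}(V_Q/V_P,V_P)\to\Ext^1_{\pi_1(T)}(h^*(V_Q/V_P),h^*V_P)$ (a transfer argument), and then argues globally: assuming a $\pi_1(S)$-stable line $\C(H_\mu+c_1F_\mu+c_2G_\mu)$ exists, it uses the $T_\infty$-eigenvalues to force $c_1=c_2=0$, then the $T_0$- and $T_1$-eigenvalues to conclude that $(\lambda-1)^{1-\mu}H_\mu(\lambda)={}_3F_2\bigl({1,1,1-\mu\atop 2-\alpha^\chi,2-\beta^\chi};(1-\lambda)^{-1}\bigr)$ would have to be a rational function, and finally reaches a contradiction by a recurrence on its Laurent coefficients in $z=1-\lambda$, which identifies it with $a_1z\cdot{}_2F_1(\alpha^\chi,\beta^\chi;1+\mu;z)$.

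By contrast, you detect the obstruction purely locally at $\lambda=1$: the resonance between the exponent $\mu$ of $V_P$ and the exponent $\mu-1$ of $\mathit{Sol}(\theta_\lambda)$ forces any lift $g$ to acquire a $F_\mu\log(\lambda-1)$ term, so $T_1|_{V_Q}$ has a nontrivial Jordan block for the eigenvalue $e^{2\pi i\mu}$; since powers of a unipotent-plus-scalar block remain non-semisimple, this obstructs splitting after any base change. Your approach is more elementary and conceptual --- it needs neither the explicit $H_\mu$ nor the final power-series computation, and it handles the passage to arbitrary $h$ directly rather than through an $\Ext$-injectivity statement. The paper's approach, on the other hand, keeps the connection to the explicit hypergeometric solutions $F_\mu,G_\mu,H_\mu$ in the foreground, which is useful for the surrounding applications. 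One small remark: your indicial computation at $\lambda=1$ (that $\cD((\lambda-1)^\rho)$ has leading coefficient $\rho(\mu-\rho)$, whence the resonance at $\rho=\mu$) is exactly the point, and it is worth noting that the same conclusion is visible from Lemma~\ref{DH}, which gives $\cD H_\mu=-(\lambda-1)^{\mu-1}$ directly.
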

\begin{proof}
We first note that $P_{\mathrm{HG}}=\lambda\cD$ where $\cD$ is the differential operator
\eqref{DE}.
Let $F_\mu(\lambda)$, $\cg(\lambda)$ be as in Theorem \ref{main1}, and
$H_\mu(\lambda)$ as in in Theorem \ref{main2}.
Then the solutions of $P_{\mathrm{HG}}$ are
$F_\mu(\lambda), \cg(\lambda)$ (cf. the proof of Propositions \ref{Theta} 
and \ref{prop det}), and
the solutions of $Q_{\mathrm{HG}}$ are
$F_\mu(\lambda),\cg(\lambda), H_\mu(\lambda)$ (cf. Lemma \ref{DH}):
\[
V_P=\langle F_\mu(\lambda),\, \cg(\lambda)\rangle_\C,\quad
V_Q=\langle F_\mu(\lambda),\, \cg(\lambda),\,H_\mu(\lambda)\rangle_\C.
\]
Since 
$\Ext_{\pi_1(S)}(V_Q/V_P,V_P)\to \Ext_{\pi_1(T)}(h^*(V_Q/V_P),h^*V_P)$ is injective, 
we may assume $T=S$.
Assume that the sequence \eqref{reg-comp-lem1-eq1} splits.
This means that there are $c_1,c_2\in\C$ such that
$\C(H_\mu(\lambda)+c_1F_\mu(\lambda) +c_2\cg(\lambda))$ is stable
under the action of $\pi_1(S,\lambda)$.
The eigenvalues of the local monodromy $T_\infty$ at $\lambda=\infty$
on $V_P$ are $e^{2\pi i(\alpha^\chi-\mu)}$, $e^{2\pi i(\beta^\chi-\mu)}$.
On the other hand 
$H_\mu(\lambda)$ is the eigenvector with eigenvalue $e^{-2\pi i\mu}$.
Therefore $c_1=c_2=0$, namely $H_\mu(\lambda)$ is stable
under the action of $\pi_1(S,\lambda)$.
The eigenvalues of the local monodromy $T_0$ at $\lambda=0$ on $V_Q$
(resp. $V_P$)
are $1$, $1$, $e^{2\pi i(\mu-\alpha^\chi-\beta^\chi)}$
(resp. $1$, $e^{2\pi i(\mu-\alpha^\chi-\beta^\chi)}$).
Therefore the eigenvalue of $T_0$ on $V_Q/V_P\cong \C$ is $1$, namely
the trivial action.
Therefore $T_1=T_\infty^{-1}$ acts on $H_\mu(\lambda)$ by multiplication by
$e^{2\pi i\mu}$.
Thus the function 
\[
(1-\alpha^\chi)(1-\beta^\chi)(\lambda-1)^{1-\mu}
H_\mu(\lambda)=
{}_3F_2\left({1,1,1-\mu\atop 2-\alpha^\chi,2-\beta^\chi};(1-\lambda)^{-1}\right)
\]
has the trivial monodromy, and this means that this is a rational function.
This is impossible. Indeed let $\sum_na_nz^n$ be the Laurent expansion of the above
with respect to variable $z=1-\lambda$. Then this satisfies a differential equation
\[
Q=(D_z-1)(D_z-1)(D_z-1+\mu)-zD_z(D_z-1+\alpha^\chi)(D_z-1+\beta^\chi).
\]
Hence
\[
(n-1)^2(n-1+\mu)a_n-(n-1)(n-2+\alpha^\chi)(n-2+\beta^\chi)a_{n-1}=0,\quad \forall\, n.
\]
Then $a_n=0$ for all $n\leq0$ as $a_n=0$ for $n\ll0$. Moreover
\[
a_n=\frac{(n-2+\alpha^\chi)(n-2+\beta^\chi)}{(n-1)(n-1+\mu)}a_{n-1}
=\frac{(\alpha^\chi)_{n-1}(\beta^\chi)_{n-1}}{(n-1)!(1+\mu)_{n-1}}a_1,\quad n\geq 1.
\]
We thus have
\[
\sum_na_nz^n=a_1z\cdot\,\FF{\alpha^\chi,\,\beta^\chi}{1+\mu}{z}.
\]
The right hand side has nontrivial monodromy unless $a_1=0$.
\end{proof}

\begin{thm}\label{reg-comp-cor2}
Let the notation and assumption be as in Theorem \ref{main2}.
Then the dual of the exact sequence 
\begin{equation}\label{reg-comp-cor2-eq1}
0\lra W_2\cH_\dR({}^t\chi)\lra
\cH^\prime_\dR({}^t\chi)\lra C({}^t\chi)\lra0
\end{equation}
of connections which underlies \eqref{reg-eq00} is isomorphic to  
\begin{equation}\label{reg-comp-cor2-eq2}
0\lra D_S/\theta_\lambda D_S
\lra D_S/D_SQ_{\mathrm{HG}}\lra D_S/D_SP_{\mathrm{HG}}\lra 0.
\end{equation}
In particular, the extension \eqref{reg-comp-cor2-eq1} is nontrivial
by Lemma \ref{reg-comp-lem1}.
In other words, the regulator $\rho({}^t\chi)$ in Theorem \ref{main2} does not vanish.
\end{thm}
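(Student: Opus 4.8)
The plan is to prove the asserted isomorphism of exact sequences term by term through the solution functor, and then read off the nonsplitting, from which the non-vanishing of $\rho({}^t\chi)$ is immediate. All six $D_S$-modules occurring in the dual of \eqref{reg-comp-cor2-eq1} and in \eqref{reg-comp-cor2-eq2} are $\O_S$-coherent on $S=\P^1\setminus\{0,1,\infty\}$: for $W_2\cH_\dR$, $\cH'_\dR$, $C$ this is built in, and for $D_S/D_SP_{\mathrm{HG}}$, $D_S/D_SQ_{\mathrm{HG}}$, $D_S/D_S\theta_\lambda$ it holds because the leading coefficients of these operators vanish only at $0,1,\infty$. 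Hence the contravariant solution functor $\mathit{Sol}(\bullet)=\mathcal{H}om_{D_S}(\bullet,\O^{an})$ is an equivalence onto $\C$-local systems on $S^{an}$, and it suffices to produce a compatible isomorphism of the associated short exact sequences of local systems. The building blocks are: on $S$ one has $D_S/D_SP_{\mathrm{HG}}=D_S/D_S\cD$ (since $\lambda$ is invertible there, cf.\ \eqref{DE}), so $\mathit{Sol}(D_S/D_SP_{\mathrm{HG}})=V_P=\langle F_\m,G_\m\rangle$, $\mathit{Sol}(D_S/D_SQ_{\mathrm{HG}})=V_Q=\langle F_\m,G_\m,H_\m\rangle$, and $\mathit{Sol}(D_S/D_S\theta_\lambda)=\langle(\lambda-1)^{\m-1}\rangle$ (Lemma \ref{DH} and the proof of Lemma \ref{reg-comp-lem1}).

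First I would treat the quotient term. The polarization gives $(W_2\cH(\chi))^*\cong W_2\cH({}^t\chi)$, hence $W_2\cH_\dR({}^t\chi)^\vee\cong W_2\cH_\dR(\chi)$. By Theorem \ref{main1}, with respect to the cyclic vector $[s^{m-1}ds\wedge\omega^\chi]$ the local system $\mathit{Sol}(W_2\cH_\dR(\chi))$ is spanned by the functions $\Theta F_\m(\lambda)$ and $\Theta G_\m(\lambda)$. Now $\Theta=q(\lambda)+r(\lambda)\pl$ has coefficients in $\ol\Q[\lambda,1/\lambda(\lambda-1)]=\Gamma(S,\O^\zar)$, so it is an honest first-order operator on all of $S$; and since $\cD$ is irreducible under the standing hypothesis (cf.\ the proof of Theorem \ref{main1}), the operators $\Theta$ and $\cD$ have no common solution — otherwise $\ker\Theta$ would be a $\pi_1(S)$-stable line inside the irreducible $\mathit{Sol}(\cD)$. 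Being of single-valued coefficients, $\Theta$ thus induces a $\pi_1(S)$-isomorphism $V_P\xrightarrow{\sim}\langle\Theta F_\m,\Theta G_\m\rangle$ of local systems, so $W_2\cH_\dR({}^t\chi)^\vee\cong W_2\cH_\dR(\chi)\cong D_S/D_SP_{\mathrm{HG}}$.

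For the sub-object, $C({}^t\chi)^\vee$ is a rank-one connection on $S$, hence determined by its monodromies at $0,1,\infty$; by \eqref{reg-eq02} and Proposition \ref{HG-prop2}, $C$ is $\pi_*\Q|_{\{1\}\times S}$ twisted by a constant, so $C({}^t\chi)^\vee$ has monodromy a power of $e^{2\pi i q^\chi}$ at $\lambda=1$ and trivial monodromy at $\lambda=0$. Since $\m\equiv q^\chi\pmod\Z$, this matches the monodromy of $\langle(\lambda-1)^{\m-1}\rangle=\mathit{Sol}(D_S/D_S\theta_\lambda)$ (with the normalization of $\theta_\lambda$ of Lemma \ref{reg-comp-lem1}), so $C({}^t\chi)^\vee\cong D_S/D_S\theta_\lambda$.

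The main obstacle is the remaining step: gluing these two identifications into an isomorphism of the full three-term sequences, i.e.\ matching the two classes in $\Ext^1_{D_S}(D_S/D_SP_{\mathrm{HG}},D_S/D_S\theta_\lambda)$, which are well defined up to the scalar automorphisms of the irreducible sub and quotient. Applying $\mathit{Sol}$, the middle term of the dual of \eqref{reg-comp-cor2-eq1} becomes $\cH'_B({}^t\chi)\otimes\C$, and by the commutative diagram following Theorem \ref{main2} (relating $\rho$ to Beilinson's regulator and Carlson's isomorphism) together with the formula of Theorem \ref{main2}, its extension class is the class of $\Theta H_\m$; that is, $\cH'_B({}^t\chi)\otimes\C=\langle\Theta F_\m,\Theta G_\m,\Theta H_\m\rangle$ — after pulling back along $h$, which is harmless by the injectivity of pullback on these $\Ext$-groups noted in the proof of Lemma \ref{reg-comp-lem1}. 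I would then observe that $\Theta$ is also injective on $V_Q$: a nonzero kernel vector spans $\ker\Theta$, a $\pi_1(S)$-stable line in $V_Q$ necessarily transverse to the irreducible $V_P$, and so a $\pi_1(S)$-splitting of $0\to V_P\to V_Q\to V_Q/V_P\to0$, which is excluded by Lemma \ref{reg-comp-lem1} (with $h=\id_S$). Hence $\Theta$ carries $0\to V_P\to V_Q\to V_Q/V_P\to0$ isomorphically onto $0\to\langle\Theta F_\m,\Theta G_\m\rangle\to\langle\Theta F_\m,\Theta G_\m,\Theta H_\m\rangle\to(\cdots)\to0$, compatibly with the previous two identifications, and applying the inverse equivalence to $\mathit{Sol}$ gives the asserted isomorphism of $D_S$-module sequences. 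Since an extension splits iff its dual does, the nonsplitting of \eqref{reg-comp-cor2-eq2} from Lemma \ref{reg-comp-lem1} then shows that \eqref{reg-comp-cor2-eq1} is nontrivial, i.e.\ $\rho({}^t\chi)\neq0$. The truly delicate point is the identification $\cH'_B({}^t\chi)\otimes\C=\langle\Theta F_\m,\Theta G_\m,\Theta H_\m\rangle$: one must check that the Carlson-type datum of Theorem \ref{main2}, defined only modulo $F^0$ and the Betti lattice (i.e.\ modulo $\ol{\Q(\lambda)}^2$), corresponds under the above identifications to the extension class of $D_S/D_SQ_{\mathrm{HG}}$, which is defined only modulo $V_P$.
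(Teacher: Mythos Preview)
Your overall architecture---Riemann--Hilbert plus the solution functor, then match the three terms and the extension class---is exactly the paper's, and your observation that $\Theta$ is injective on $V_Q$ (because a line in $\ker\Theta\cap V_Q$ transverse to $V_P$ would split the sequence, contradicting Lemma~\ref{reg-comp-lem1}) is a clean way to get $\Theta V_Q\cong V_Q$. The identifications of the outer terms are fine.

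The real gap is the middle term, and it is not merely ``delicate'' as you say at the end---it is the heart of the proof and your argument does not close it. You invoke Theorem~\ref{main2} to conclude $\cH'_B({}^t\chi)\otimes\C=\langle\Theta F_\mu,\Theta G_\mu,\Theta H_\mu\rangle$. But Theorem~\ref{main2} only computes $\rho({}^t\chi)$, i.e.\ the image of the extension class in the Jacobian sheaf $\cJ^*$, which quotients by \emph{both} the Betti lattice and the Hodge piece $\Hom(W_2\cH_\dR/F^1,\O^\zar)$. What you need is the class in $\Ext^1_{\pi_1(S)}$, which only quotients by the Betti lattice; the Carlson datum modulo $\ol{\Q(\lambda)}^2$ is strictly coarser and does not a priori determine the local-system extension. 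The commutative diagram after Theorem~\ref{main2} relates $\rho$ to the Beilinson regulator, not to the $D_S$-module class.

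The paper fills this gap differently: it does not use $\rho$ at all here, but rather Lemma~\ref{reg-lem1}, which says the \emph{actual period} $\int_{\Gamma(\lambda)}s^{m_1-1}ds\,\omega^\chi$ equals $\Theta H_\mu(\lambda)$ up to an algebraic function $R(\lambda)\in\ol{\Q(\lambda)}$. Thus the period map realizes $\cH'_B({}^t\chi)$ as $\langle\Theta F_\mu,\Theta G_\mu,\Theta H_\mu+R\rangle$, not $\Theta V_Q$. Two things then remain: first, one must show these three functions are linearly independent (so the period map is injective)---the paper does this by an eigenvalue argument at $\lambda=\infty$ combined with the fact that $V_P$ has infinite monodromy; your $\Theta$-injectivity on $V_Q$ is the analogue for $\langle\Theta F_\mu,\Theta G_\mu,\Theta H_\mu\rangle$ but says nothing about the version with $+R$. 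Second, one passes to a cover $T$ where $R$ becomes single-valued; there the extension class of $\langle\Theta F_\mu,\Theta G_\mu,\Theta H_\mu+R\rangle$ agrees with that of $\Theta V_Q$, and then injectivity of pullback on $\Ext^1_{\pi_1}$ brings this down to $S$. Your mention of ``pulling back along $h$'' gestures at this but without the input from Lemma~\ref{reg-lem1} there is nothing for the cover to trivialize.
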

\begin{proof}
By the Riemann-Hilbert correspondence, 
it is enough to show that there is an isomorphism
\begin{equation}\label{reg-comp-cor2-eq3}
\xymatrix{
0\ar[r]
&W_2\cH_B({}^t\chi)\ar[r]\ar[d]^\cong
&\cH^\prime_B({}^t\chi)\ar[r]\ar[d]^\cong
&C({}^t\chi)\ar[r]\ar[d]^\cong&0\\
0\ar[r]&V_P\ar[r]&V_P\ar[r]&V_Q/V_P\ar[r]&0
}
\end{equation}
where the top sequence is the underlying local systems of $\C$-modules.

We know that the local system $ W_2\cH_B({}^t\chi)
\cong (W_2\cH_B(\chi))^*$ are spanned
by $Z_1$, $Z_2$, and $C({}^t\chi)$ by the boundary of 
$\Gamma(\lambda)$ 
(see \eqref{period-eq4} and Lemma \ref{reg-lem1}
for the notation).
It is not hard to see that the monodromy representation of $C({}^t\chi)$
is isomorphic to that of $V_Q/V_P$ (cf. \eqref{reg-eq02}).
The monodromy representation of $\langle Z_1,Z_2\rangle_\C$
is isomorphic to that of
\begin{align*}
\left\langle\int_{Z_1}s^{m_1-1}ds\omega,\, \int_{Z_2}s^{m_1-1}ds\omega\right\rangle
&=\langle \Theta F_\mu(\lambda),\,\Theta\cg(\lambda)\rangle\\
&\os{\cong}{\leftarrow} \langle F_\mu(\lambda),\,\cg(\lambda)\rangle\\
&=V_P
\end{align*}
by Theorem \ref{main1} (Period formula).

We show that the monodromy representation of $\cH'_B({}^t\chi)
=\langle Z_1,Z_2,\Gamma(\lambda)\rangle$
is isomorphic to that of
\begin{equation}\label{reg-comp-cor2-eq4}
\int_{Z_1}s^{m_1-1}ds\omega,\quad \int_{Z_2}s^{m_1-1}ds\omega,
\quad \int_{\Gamma_x(\lambda)}s^{m_1-1}ds\omega.
\end{equation}
To do this we need to check that the above integrals are linearly independent
over $\C$.
The first and second integrals are spanned by
$\Theta F_\mu(\lambda)$, $\Theta\cg(1-\lambda)$ and they are linearly independent
by Theorem \ref{main1} (Period formula).
The 3rd integral is equal to $\Theta H_\mu(\lambda)$ modulo an algebraic function
by Theorem \ref{main2} (Regulator formula).
Suppose that the 3rd integral is a linear combination of the 1st and 2nd integrals.
Then
\[
\Theta H_\mu(\lambda)=c_1\Theta F_\mu(\lambda)+c_2\Theta\cg(\lambda)
+\mbox{(an algebraic function)},\quad (\exists c_i\in\C).
\]
Let $T_\infty$ be the local monodromy at $\lambda=\infty$.
Then the eigenvalues on $V_P=\langle\Theta F_\mu(\lambda),\Theta\cg(\lambda)
\rangle$ are $e^{2\pi i(\alpha^\chi-\mu)}$, $e^{2\pi i(\beta^\chi-\mu)}$
and $\Theta H_\mu(\lambda)$ is the eigenvector with eigenvalue $e^{-2\pi i\mu}$.
Applying 
$(T_\infty-e^{2\pi i(\alpha^\chi-\mu)})(T_\infty-e^{2\pi i(\beta^\chi-\mu)})$ to the above,
we have that $\Theta H_\mu(\lambda)$ is an algebraic function.
However since \eqref{reg-comp-lem1-eq1} is a nontrivial extension 
(Lemma \ref{reg-comp-lem1}),
there is some $g\in \C[\pi_1(S,\lambda)]$ such that 
$gH_\mu(\lambda)=c'_1F_\mu(\lambda)+c'_2\cg(\lambda)\ne0$.
Applying $\Theta$, we have that
$f(\lambda):=c'_1\Theta F_\mu(\lambda)+c'_2\Theta \cg(\lambda)=g\Theta
 H_\mu(\lambda)$
is also an algebraic function.
Since $\Theta F_\mu(\lambda)$, $\Theta\cg(\lambda)$ are linearly independent over $\C$,
$f(\lambda)\ne0$.
It generates the 2-dimensional space $\langle \Theta F_\mu(\lambda),\,\Theta\cg(\lambda)\rangle\cong V_P$
as $\C[\pi_1(S)]$-module since $V_P$ is irreducible.
Hence the monodromy representation of $V_P$ factors through a finite quotient.
This is a contradiction. Hence the three integrals \eqref{reg-comp-cor2-eq4}
are linearly independent over $\C$.

We now have that
the monodromy representation of $\cH'_B({}^t\chi)$ 
is isomorphic to that of
\[
\Theta F_\mu(\lambda),\,\Theta\cg(\lambda), \,
\Theta H_\mu(\lambda)+\mbox{(an algebraic function)}.
\]
Therefore letting $h:T\to S$ be a finite covering which trivializing the monodromy of
the algebraic function, we have an isomorphism
$h^*\cH^\prime_B({}^t\chi))\cong h^*V_Q$ in a canonical way.
Thus the extension data
$[h^*V_Q]\in \Ext^1_{\pi_1(T)}(h^*(V_Q/V_P),h^*V_P)$
coincides with 
$[h^*\cH^\prime_B({}^t\chi)]\in \Ext^1_{\pi_1(T)}(h^*\Coker(T_1-1)({}^t\chi),
h^*W_2\cH^\prime_B({}^t\chi))$ under the natural isomorphisms 
$V_P\cong W_2\cH^\prime_B({}^t\chi)$ and $V_Q/V_P\cong \Coker(T_1-1)({}^t\chi)$.
Now the assertion follows from the injectivity of 
$\Ext^1_{\pi_1(S)}(V_Q/V_P,V_P)\to
\Ext^1_{\pi_1(T)}(h^*(V_Q/V_P),h^*V_P)$.
This completes the proof. 
\end{proof}

\subsection{Complement : Precise formula of Regulators}
Applying the 3-term relation on ${}_3F_2$ (e.g. \cite{a-o} Lemma 7.5) to 
\eqref{reg-sect5-eq5} and \eqref{reg-sect5-eq6},
one can obtain a more explicit description of $\phi_i(\l)$
as $\ol\Q(\l)$-linear combinations of
$H_\mu(\l)$ and $H_{\mu-1}(\l)$:
\begin{equation}\label{reg-comp-eq1}
H_\mu(\lambda):=(1-\alpha^\chi)^{-1}(1-\beta^\chi)^{-1}(\lambda-1)^{\mu-1}
{}_3F_2\left({1,1,1-\mu\atop 2-\alpha^\chi,2-\beta^\chi};(1-\lambda)^{-1}\right),
\end{equation}
\begin{equation}\label{reg-comp-eq2}
H_{\mu-1}(\lambda):=(1-\alpha^\chi)^{-1}(1-\beta^\chi)^{-1}(\lambda-1)^{\mu-2}
{}_3F_2\left({1,1,2-\mu\atop 2-\alpha^\chi,2-\beta^\chi};(1-\lambda)^{-1}\right)
\end{equation}
where $\mu:=m/l$ (cf. Prop. \ref{prop det}).
The following theorem is used in \cite{a-o-log}.
\begin{thm}[Regulator formula -- precise version]\label{reg-comp-cor1}
Let the notation and assumption be as in Theorem \ref{main2}.
Let $\mu=m/l$ be as in Theorem \ref{main1}.
Let $a_i(\lambda),b_i(\lambda)$ be as in \eqref{p to ab}.
Put $a:=2-\alpha^\chi$, $b:=2-\beta^\chi$, and
\begin{align*}
e_i(s)&:=(-1)^i(a_i(\lambda)+(s+i)b_{i+1}(\lambda))(1-\lambda)^i\\
&=\begin{cases}
\left(\frac{d^ip_0(\lambda)}{d\lambda^i}+(s+i)\frac{d^{i+1}p_1(\lambda)}{d\lambda^{i+1}}\right)
\frac{(1-\lambda)^i}{i!}& i\geq0, \\
-(s-1)p_1(\lambda)/(1-\lambda)&i={-1},
\end{cases}\\
A(s)&:=\frac{s(a+b+2s-3-s(1-\lambda)^{-1})}{(a+s-1)(b+s-1)},\quad
B(s):=\frac{s(1-s)\lambda}{(a+s-1)(b+s-1)}
\end{align*}
with indeterminate $s$.
Define $C_i(s)$ and $D_i(s)$ by
\[
\begin{pmatrix}
C_{i+1}(s)\\
D_{i+1}(s)
\end{pmatrix}
=
\begin{pmatrix}
A(s)&(\lambda-1)^{-1}\\
B(s)&0
\end{pmatrix}
\begin{pmatrix}
C_i(s+1)\\
D_i(s+1)
\end{pmatrix},
\quad \begin{pmatrix}
C_{-1}(s)\\D_{-1}(s)
\end{pmatrix}
:=\begin{pmatrix}0\\1\end{pmatrix}.
\]
Put
\[
E_1^{(r)}(s):=\sum_{i\geq -1} e_i(s+r)C_{r+i}(s),\quad
E_2^{(r)}(s):=\sum_{i\geq -1} e_i(s+r)D_{r+i}(s).
\]
Then
\begin{align*}
&\phi_1(\lambda)\equiv C_1(1-\lambda)^n
[E_1^{(n)}(\mu)H_\mu(\lambda)+
E_2^{(n)}(\mu)H_{\mu-1}(\lambda)],
\\&
\phi_2(\lambda)\equiv C_2(1-\lambda)^{n-1}
[E_1^{(n-1)}(\mu)H_\mu(\lambda)+
E_2^{(n-1)}(\mu)H_{\mu-1}(\lambda)]
\end{align*}
modulo $\ol{\Q(\lambda)}$ with some $C_1,C_2\in\ol\Q^\times$.
Here we note that $E_i^{(r)}(\mu)\in \ol\Q(\lambda)$ are rational functions of variable $\lambda$.
\end{thm}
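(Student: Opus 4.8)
The plan is to start from the congruences \eqref{reg-sect5-eq5} and \eqref{reg-sect5-eq6}, which give $\phi_1(\lambda)\equiv Q_m(\lambda)$ and $\phi_2(\lambda)\equiv Q_{m-l}(\lambda)=(\mu-1)^{-1}\pl Q_m(\lambda)$ modulo $\ol{\Q(\lambda)}$, together with the closed form \eqref{Q_m}. Using $\pl H_{\mu+i}=(\mu+i-1)H_{\mu+i-1}$ from Lemma \ref{lemma H}, formula \eqref{Q_m} rewrites as
\[
l\,Q_m(\lambda)=\sum_{i\geq-1}\bigl(a_i(\lambda)+(\mu+i)b_{i+1}(\lambda)\bigr)H_{\mu+i}(\lambda)=\sum_{i\geq-1}(-1)^i(1-\lambda)^{-i}e_i(\mu)\,H_{\mu+i}(\lambda),
\]
and $Q_{m-l}$ is given by the same formula with $\mu$ replaced by $\mu-1$. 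From the proof of Theorem \ref{main2} we already know $\phi_1\equiv\Theta H_\mu$ for a first-order operator $\Theta=q(\lambda)+r(\lambda)\pl$, hence $\phi_1$ is an $\ol\Q(\lambda)$-combination of $H_\mu$ and $H_{\mu-1}=(\mu-1)^{-1}\pl H_\mu$; the point of the theorem is to make this combination explicit, so the goal is to rewrite every $H_{\mu+i}$ occurring above as an $\ol\Q(\lambda)$-linear combination of $H_\mu(\lambda)$, $H_{\mu-1}(\lambda)$ modulo $\ol{\Q(\lambda)}$ and to collect the coefficients of $H_\mu$ and of $H_{\mu-1}$.

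The reduction rests on Lemma \ref{DH}: by the same computation, $\cD_\nu H_\nu=-(\lambda-1)^{\nu-1}$ for every $\nu$, where $\cD_\nu$ is the operator of \eqref{DE} with $\mu$ replaced by $\nu$. Since $\mu=m/l\in\Q$, each $(\lambda-1)^{\mu+i-1}$ is an algebraic function of $\lambda$, so substituting $\pl H_\nu=(\nu-1)H_{\nu-1}$ and $\pl^2 H_\nu=(\nu-1)(\nu-2)H_{\nu-2}$ into the congruence $\cD_\nu H_\nu\equiv0\pmod{\ol{\Q(\lambda)}}$ gives the three-term relation
\[
H_\nu\equiv\frac{(\nu-1)\{\a^\chi+\b^\chi-\nu-(\a^\chi+\b^\chi-2\nu+1)\lambda\}}{(\a^\chi-\nu)(\b^\chi-\nu)}\,H_{\nu-1}+\frac{(\nu-1)(\nu-2)\lambda(1-\lambda)}{(\a^\chi-\nu)(\b^\chi-\nu)}\,H_{\nu-2}\pmod{\ol{\Q(\lambda)}}
\]
(the three-term relation for ${}_3F_2$, cf.\ \cite{a-o} Lemma 7.5). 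The denominators $(\a^\chi-\nu)(\b^\chi-\nu)$ are nonzero for $\nu\in\mu+\Z$ because, by the hypotheses inherited from Theorem \ref{main1}, $\mu\not\equiv\a^\chi,\b^\chi\pmod\Z$.

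Next I would iterate this relation, organizing the iteration as a product of $2\times2$ transfer matrices. Carried out with a suitable normalization, the iteration expresses $(1-\lambda)^iH_{\mu+i}$ modulo $\ol{\Q(\lambda)}$ as $C_i(\mu)$ times a fixed function proportional to $H_\mu$ plus $D_i(\mu)$ times one proportional to $H_{\mu-1}$, where passing from level $i$ to level $i+1$ multiplies $(C_i,D_i)$ by the transfer matrix of the three-term relation at parameter $\mu+i$, rescaled by the relevant power of $1-\lambda$; a direct computation — writing $a=2-\a^\chi$, $b=2-\b^\chi$ and clearing the powers of $1-\lambda$ — identifies this rescaled transfer matrix with $M(s)=\begin{pmatrix}A(s)&(\lambda-1)^{-1}\\B(s)&0\end{pmatrix}$ at $s=\mu+i$, while the base vector $\binom{C_{-1}}{D_{-1}}=\binom{0}{1}$ records that $H_{\mu-1}$ is already in normal form. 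Substituting the resulting expressions for the $H_{\mu+i}$ into $l\,Q_m(\lambda)$ and gathering the coefficients of $H_\mu$ and $H_{\mu-1}$ yields $\phi_1(\lambda)\equiv C_1(1-\lambda)^n[E_1^{(n)}(\mu)H_\mu(\lambda)+E_2^{(n)}(\mu)H_{\mu-1}(\lambda)]$, the unit $C_1\in\ol\Q^\times$ and the power $(1-\lambda)^n$ (with $n$ the truncation degree $N$ of \eqref{ab to p}) absorbing the normalizations; the formula for $\phi_2$ follows the same way from $\phi_2\equiv Q_{m-l}$, the substitution $\mu\mapsto\mu-1$ there being reabsorbed — after re-expressing $H_{\mu-2}$ through $H_\mu$ and $H_{\mu-1}$ via the relation at $\nu=\mu$ — into the index shift $n\mapsto n-1$. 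For the rationality claim: $\mu,\a^\chi,\b^\chi\in\Q$ and the denominators $(1-\a^\chi+\mu+j)(1-\b^\chi+\mu+j)$ are nonzero, so each $A(\mu+j)$, $B(\mu+j)$ and $(\lambda-1)^{-1}$ lies in $\ol\Q(\lambda)$, hence inductively $C_i(\mu),D_i(\mu)\in\ol\Q(\lambda)$, while $e_i(\mu+r)\in\ol\Q(\lambda)$ since $a_i,b_i\in\ol\Q[\lambda]$ (the divisibility $t(1-t)\mid p_1(t)$ from \textbf{P1} handling the $i=-1$ term); therefore $E_1^{(r)}(\mu)$ and $E_2^{(r)}(\mu)$, being finite $\ol\Q(\lambda)$-combinations of the $C_{r+j}(\mu)$ and $D_{r+j}(\mu)$, lie in $\ol\Q(\lambda)$, and the error terms discarded along the way stay algebraic because differential operators with $\ol\Q(\lambda)$-coefficients preserve $\ol{\Q(\lambda)}$.

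The hard part will be the bookkeeping in the third paragraph: verifying that the iterated three-term reduction assembles into precisely the stated first-order recursion — with the parameter shift $s\mapsto s+1$, the index shifts encoded in $E_1^{(r)}$ and $E_2^{(r)}$, and the exact shapes of $A(s)$ and $B(s)$ after the powers of $1-\lambda$ are cleared. This is a finite and purely mechanical computation, but it is where all the delicate index-matching lives; everything else (Lemmas \ref{lemma H} and \ref{DH}, the rationality of $\mu$, and the non-vanishing of the denominators) is routine.
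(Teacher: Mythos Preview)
Your proposal is correct and follows essentially the same route as the paper: start from \eqref{reg-sect5-eq5}--\eqref{reg-sect5-eq6}, expand $Q_m$ via \eqref{Q_m} and Lemma~\ref{lemma H} into a sum over $H_{\mu+i}$, then reduce each $H_{\mu+i}$ to an $\ol\Q(\lambda)$-combination of $H_\mu$ and $H_{\mu-1}$ modulo $\ol{\Q(\lambda)}$ via the contiguous three-term relation, organizing the iteration by the transfer matrices $\begin{pmatrix}A(s)&(\lambda-1)^{-1}\\B(s)&0\end{pmatrix}$ that define $C_i,D_i$. The only difference is cosmetic: the paper invokes the three-term relation on ${}_3F_2$ as an external fact (\cite{a-o} Lemma~7.5), working first with the indeterminate-$s$ identity ${}_3F_2\bigl({1,1,1-s-i\atop a,b};x\bigr)\equiv C_i(s)\,{}_3F_2\bigl({1,1,1-s\atop a,b};x\bigr)+D_i(s)\,{}_3F_2\bigl({1,1,2-s\atop a,b};x\bigr)\pmod{\Q(s,x)}$ and then specializing, whereas you derive the same relation internally from Lemma~\ref{DH}; both paths land on the same recursion, and the paper, like you, leaves the final index-matching as a direct computation.
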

\begin{proof}
The 3-term relation on ${}_3F_2$ implies that 
$C_i$ and $D_i$ satisfy 
\[
{}_3F_2\left({1,1,1-s-i\atop a,b};x\right)
\equiv
C_i(s,x)
{}_3F_2\left({1,1,1-s\atop a,b};x\right)
+D_i(s,x){}_3F_2\left({1,1,2-s\atop a,b};x\right)
\]
modulo $\Q(s,x)$.
Hence
\begin{align*}
&(1-\lambda)^{m/l+i-1}{}_3F_2
\left(
{1,1,1-m/l-i\atop
2-\alpha^\chi,~2-\beta^\chi}
;(1-\lambda)^{-1}
\right)\\
&\equiv(1-\alpha^\chi)(1-\beta^\chi)(1-\lambda)^{r+i}
(C_{i+r}(\mu,x)H_\mu(\lambda)+D_{i+r}(q^\chi,x)H_{\mu-1}(\lambda))
\end{align*}
for $m=k+lr$, $r\in\Z$.
Apply this to \eqref{reg-sect5-eq5} and \eqref{reg-sect5-eq6}.
The rest is a direct computation (left to the reader).
\end{proof}

\end{document}